\documentclass[11pt]{article}
\usepackage{fontspec}
\usepackage{amsmath,amssymb,amsthm,mathtools,mathrsfs}
\usepackage{microtype,booktabs,longtable,array,enumitem,fvextra,xurl}
\usepackage[margin=1in]{geometry}
\usepackage[dvipsnames]{xcolor}
\usepackage[colorlinks=true,linkcolor=MidnightBlue,citecolor=MidnightBlue,urlcolor=MidnightBlue]{hyperref}
\hypersetup{pdftitle={The irrationality exponent of the lemniscate period},pdfauthor={Gregory V. Chudnovsky},pdfsubject={A determinant proof with a Lean 4 formalization}}
\numberwithin{equation}{section}
\newtheorem{theorem}{Theorem}[section]
\newtheorem{proposition}[theorem]{Proposition}
\newtheorem{lemma}[theorem]{Lemma}
\newtheorem{corollary}[theorem]{Corollary}
\theoremstyle{definition}

\theoremstyle{remark}
\newtheorem{remark}[theorem]{Remark}

\newcommand{\Q}{\mathbb Q}
\newcommand{\Z}{\mathbb Z}
\newcommand{\N}{\mathbb N}

\newcommand{\leanref}[2]{\hyperlink{lean.\csname leanid@\detokenize{#2}\endcsname}{\textsf{[\csname leanid@\detokenize{#2}\endcsname]}}}
\expandafter\def\csname leanid@QuadraticPi.Omega_eventual_rational_lower_bound\endcsname{L001}
\expandafter\def\csname leanid@QuadraticPi.Omega_irrational\endcsname{L002}
\expandafter\def\csname leanid@QuadraticPi.irrationalityExponent_Omega_eq_two\endcsname{L003}
\expandafter\def\csname leanid@QuadraticPi.Omega_approximation_supremum_eq_two\endcsname{L004}
\expandafter\def\csname leanid@QuadraticPi.Lemniscate.Omega\endcsname{L005}
\expandafter\def\csname leanid@QuadraticPi.Lemniscate.integrand_intervalIntegrable\endcsname{L006}
\expandafter\def\csname leanid@QuadraticPi.Lemniscate.Omega_pos\endcsname{L007}
\expandafter\def\csname leanid@QuadraticPi.LemniscatePeriod.squarePair\endcsname{L008}
\expandafter\def\csname leanid@QuadraticPi.LemniscateSquareLattice.square_G_conj\endcsname{L009}
\expandafter\def\csname leanid@QuadraticPi.LemniscateSquareLattice.square_weierstrassP_ofReal_re\endcsname{L010}
\expandafter\def\csname leanid@QuadraticPi.LemniscateSquareLattice.square_derivWeierstrassP_ofReal_re\endcsname{L011}
\expandafter\def\csname leanid@QuadraticPi.LemniscatePeriod.g₃_eq_zero\endcsname{L012}
\expandafter\def\csname leanid@QuadraticPi.LemniscatePeriod.weierstrassP_mul_I\endcsname{L013}
\expandafter\def\csname leanid@QuadraticPi.LemniscateSquareLattice.square_g₂_scale\endcsname{L014}
\expandafter\def\csname leanid@QuadraticPi.LemniscateSquareInvariantPositivity.G_four_re_lower_bound\endcsname{L015}
\expandafter\def\csname leanid@QuadraticPi.LemniscateSquareInvariantPositivity.unit_g₂_re_pos\endcsname{L016}
\expandafter\def\csname leanid@QuadraticPi.LemniscateSquareNormalization.normalizingSide_positive\endcsname{L017}
\expandafter\def\csname leanid@QuadraticPi.LemniscateSquareNormalization.normalizedPair_g₂\endcsname{L018}
\expandafter\def\csname leanid@QuadraticPi.LemniscateSquareNormalization.normalizedPair_g₃\endcsname{L019}
\expandafter\def\csname leanid@QuadraticPi.LemniscateDifferential.order_derivWeierstrassP\endcsname{L020}
\expandafter\def\csname leanid@QuadraticPi.LemniscateDifferential.deriv_derivWeierstrassP\endcsname{L021}
\expandafter\def\csname leanid@QuadraticPi.LemniscateODEUniqueness.second_order_series_unique\endcsname{L022}
\expandafter\def\csname leanid@QuadraticPi.LemniscateODEUniqueness.analytic_second_order_unique\endcsname{L023}
\expandafter\def\csname leanid@QuadraticPi.LemniscateCriticalPoints.two_mul_mem_lattice_of_deriv_eq_zero\endcsname{L024}
\expandafter\def\csname leanid@QuadraticPi.LemniscateRealBranch.real_weierstrassP_tendsto_atTop\endcsname{L025}
\expandafter\def\csname leanid@QuadraticPi.LemniscateRealBranch.square_real_deriv_ne_zero\endcsname{L026}
\expandafter\def\csname leanid@QuadraticPi.LemniscateRealBranch.one_lt_realP\endcsname{L027}
\expandafter\def\csname leanid@QuadraticPi.LemniscateRealBranch.realP_halfPeriod_eq_one\endcsname{L028}
\expandafter\def\csname leanid@QuadraticPi.LemniscateRealBranch.realDerivP_neg\endcsname{L029}
\expandafter\def\csname leanid@QuadraticPi.LemniscateRealCalculus.hasDerivAt_realP\endcsname{L030}
\expandafter\def\csname leanid@QuadraticPi.LemniscateRealCalculus.hasDerivAt_realDerivP_of_normalization\endcsname{L031}
\expandafter\def\csname leanid@QuadraticPi.LemniscateRealCalculus.real_cubic_of_normalization\endcsname{L032}
\expandafter\def\csname leanid@QuadraticPi.Lemniscate.reciprocal_sqrt_hasDerivAt\endcsname{L033}
\expandafter\def\csname leanid@QuadraticPi.Lemniscate.halfPeriod_eq_integral_of_branch\endcsname{L034}
\expandafter\def\csname leanid@QuadraticPi.Lemniscate.period_eq_Omega_of_real_branch\endcsname{L035}
\expandafter\def\csname leanid@QuadraticPi.LemniscateOmegaNormalization.omegaPair_g₂_eq_four\endcsname{L036}
\expandafter\def\csname leanid@QuadraticPi.LemniscateOmegaNormalization.period_eq_Omega_of_normalization\endcsname{L037}
\expandafter\def\csname leanid@QuadraticPi.LemniscateOmegaNormalization.normalizingSide_eq_Omega\endcsname{L038}
\expandafter\def\csname leanid@QuadraticPi.LemniscatePeriod.omega_diagonalHalfPeriod_not_mem\endcsname{L039}
\expandafter\def\csname leanid@QuadraticPi.LemniscatePeriod.omega_weierstrassP_diagonalHalfPeriod\endcsname{L040}
\expandafter\def\csname leanid@QuadraticPi.LemniscatePeriod.omega_derivWeierstrassP_diagonalHalfPeriod\endcsname{L041}
\expandafter\def\csname leanid@QuadraticPi.LemniscateLocal.localX\endcsname{L042}
\expandafter\def\csname leanid@QuadraticPi.LemniscateLocal.ellipticLog\endcsname{L043}
\expandafter\def\csname leanid@QuadraticPi.LemniscateLocal.localX_equation\endcsname{L044}
\expandafter\def\csname leanid@QuadraticPi.LemniscateLocal.ellipticLog_derivative_equation\endcsname{L045}
\expandafter\def\csname leanid@QuadraticPi.LemniscateLocal.ellipticLog_coeff_degree_cleared\endcsname{L046}
\expandafter\def\csname leanid@QuadraticPi.LemniscateTaylorBridge.omegaXSeries\endcsname{L047}
\expandafter\def\csname leanid@QuadraticPi.LemniscateTaylorBridge.omegaYSeries\endcsname{L048}
\expandafter\def\csname leanid@QuadraticPi.LemniscateOmegaGerm.omega_series_cubic\endcsname{L049}
\expandafter\def\csname leanid@QuadraticPi.LemniscateOmegaGerm.omegaYSeries_derivative\endcsname{L050}
\expandafter\def\csname leanid@QuadraticPi.LemniscateOmegaGerm.omega_localX_subst_eq\endcsname{L051}
\expandafter\def\csname leanid@QuadraticPi.LemniscateOmegaGerm.omega_ellipticLog_subst_eq\endcsname{L052}
\expandafter\def\csname leanid@QuadraticPi.LemniscateOmegaGerm.omegaYSeries_linear\endcsname{L053}
\expandafter\def\csname leanid@QuadraticPi.LemniscateTaylorBridge.omegaXSeries_derivative\endcsname{L054}
\expandafter\def\csname leanid@QuadraticPi.LemniscateLogParameterChart.logParameter\endcsname{L055}
\expandafter\def\csname leanid@QuadraticPi.LemniscateLogParameterChart.xFunction\endcsname{L056}
\expandafter\def\csname leanid@QuadraticPi.LemniscateLogParameterChart.yFunction\endcsname{L057}
\expandafter\def\csname leanid@QuadraticPi.LemniscateLogParameterChart.xSeries\endcsname{L058}
\expandafter\def\csname leanid@QuadraticPi.LemniscateLogParameterChart.ySeries\endcsname{L059}
\expandafter\def\csname leanid@QuadraticPi.LemniscateLogParameterChart.ellSeries\endcsname{L060}
\expandafter\def\csname leanid@QuadraticPi.LemniscateOmegaGerm.log_series_cubic\endcsname{L061}
\expandafter\def\csname leanid@QuadraticPi.LemniscateOmegaGerm.log_ySeries_derivative\endcsname{L062}
\expandafter\def\csname leanid@QuadraticPi.LemniscateOmegaGerm.log_localX_subst\endcsname{L063}
\expandafter\def\csname leanid@QuadraticPi.LemniscateOmegaGerm.log_ellipticLog_subst\endcsname{L064}
\expandafter\def\csname leanid@QuadraticPi.LemniscateOmegaGerm.log_ySeries_linear\endcsname{L065}
\expandafter\def\csname leanid@QuadraticPi.LemniscateLogParameterChart.ellSeries_linear\endcsname{L066}
\expandafter\def\csname leanid@QuadraticPi.SeparatedWeightPA.BaseData\endcsname{L067}
\expandafter\def\csname leanid@QuadraticPi.LemniscateInterpolation.formalJet_cubic_relation\endcsname{L068}
\expandafter\def\csname leanid@QuadraticPi.LemniscateInterpolation.Statement\endcsname{L069}
\expandafter\def\csname leanid@QuadraticPi.LemniscateGlobalInterpolation.ellipticSeparatedWeightPA\endcsname{L070}
\expandafter\def\csname leanid@QuadraticPi.LemniscatePlaceContact.contact\endcsname{L071}
\expandafter\def\csname leanid@QuadraticPi.LemniscateGrowthBudget.strict_growth_contact_margin\endcsname{L072}
\expandafter\def\csname leanid@QuadraticPi.LemniscateGrowthBudget.growth_factor_lt_three_halves\endcsname{L073}
\expandafter\def\csname leanid@QuadraticPi.LemniscateComparisonConstant.exists_thresholds\endcsname{L074}
\expandafter\def\csname leanid@QuadraticPi.LemniscateWeightGeometry.NumericData\endcsname{L075}
\expandafter\def\csname leanid@QuadraticPi.LemniscateGeometryFrame.formalJet_polynomialFrame\endcsname{L076}
\expandafter\def\csname leanid@QuadraticPi.LemniscateWeightedDerivation.polynomialFrame_supportBound\endcsname{L077}
\expandafter\def\csname leanid@QuadraticPi.LemniscateAuxiliaryPolynomial.eventually_exists_nonzero_elliptic_auxiliary\endcsname{L078}
\expandafter\def\csname leanid@QuadraticPi.LemniscateCurveAuxiliary.exists_auxiliary_with_persistence_inputs\endcsname{L079}
\expandafter\def\csname leanid@QuadraticPi.LemniscateNormalComparison.normal_product_comparison\endcsname{L080}
\expandafter\def\csname leanid@QuadraticPi.LemniscateDifferentialRigidity.finite_extension_no_exact_cubicDifferential\endcsname{L081}
\expandafter\def\csname leanid@QuadraticPi.LemniscateCurveRigidity.constant_projection\endcsname{L082}
\expandafter\def\csname leanid@QuadraticPi.LemniscateCurveInequality.weighted_curve_inequality\endcsname{L083}
\expandafter\def\csname leanid@QuadraticPi.LemniscateFixedJetScale.exists_scale\endcsname{L084}
\expandafter\def\csname leanid@QuadraticPi.LemniscateJetSelector.localized_selector_eq\endcsname{L085}
\expandafter\def\csname leanid@QuadraticPi.LemniscateJetSelector.selectorIdeal_le_prime_iff\endcsname{L086}
\expandafter\def\csname leanid@QuadraticPi.LemniscateCurveModel.hyperplane_degree\endcsname{L087}
\expandafter\def\csname leanid@QuadraticPi.LemniscateCurveModel.exceptional_degree\endcsname{L088}
\expandafter\def\csname leanid@QuadraticPi.LemniscateBlowupGeometry.interpolationBundle_ample\endcsname{L089}
\expandafter\def\csname leanid@QuadraticPi.LemniscateGlobalInterpolation.actual_blowupBundle_ample\endcsname{L090}
\expandafter\def\csname leanid@QuadraticPi.LemniscateGlobalInterpolation.eventual_packetMap_of_ample\endcsname{L091}
\expandafter\def\csname leanid@QuadraticPi.LemniscateGlobalInterpolation.eventual_divisible_packetMap_of_ample\endcsname{L092}
\expandafter\def\csname leanid@QuadraticPi.LemniscateArithmetic.coefficient_monomialImage\endcsname{L093}
\expandafter\def\csname leanid@QuadraticPi.LemniscateInterpolationMatrix.formal_packets_surjective_iff_truncated\endcsname{L094}
\expandafter\def\csname leanid@QuadraticPi.LemniscateInterpolationMatrix.matrix_surjective_of_epa_packet\endcsname{L095}
\expandafter\def\csname leanid@QuadraticPi.LemniscateUnconditionalMinors.cofinal_minors\endcsname{L096}
\expandafter\def\csname leanid@QuadraticPi.LemniscateLocal.ellipticLog_trunc_lcm_cleared\endcsname{L097}
\expandafter\def\csname leanid@QuadraticPi.LemniscateDeterminant.selectedMinor_entries_gaussian\endcsname{L098}
\expandafter\def\csname leanid@QuadraticPi.LemniscateDeterminant.selectedMinor_clearing_bound\endcsname{L099}
\expandafter\def\csname leanid@QuadraticPi.LemniscateDeterminant.selectedMinor_arithmetic_lower_bound\endcsname{L100}
\expandafter\def\csname leanid@QuadraticPi.LemniscateUnconditionalMinors.cofinal_arithmetic_lower_bounds\endcsname{L101}
\expandafter\def\csname leanid@QuadraticPi.LemniscateAnalyticRowData.row_order_sum_le\endcsname{L102}
\expandafter\def\csname leanid@QuadraticPi.LemniscateExponentConditional.mean_bounds\endcsname{L103}
\expandafter\def\csname leanid@QuadraticPi.LemniscateExponentConditional.minor\endcsname{L104}
\expandafter\def\csname leanid@QuadraticPi.LemniscateLogJetDeterminant.det_fullLog_selected_eq_selectedMinor\endcsname{L105}
\expandafter\def\csname leanid@QuadraticPi.LemniscateAnalytic.squareTheta\endcsname{L106}
\expandafter\def\csname leanid@QuadraticPi.LemniscateAnalytic.squareTheta_differentiable\endcsname{L107}
\expandafter\def\csname leanid@QuadraticPi.LemniscateAnalytic.omegaMultiplier\endcsname{L108}
\expandafter\def\csname leanid@QuadraticPi.LemniscateAnalytic.squareTheta_add_one\endcsname{L109}
\expandafter\def\csname leanid@QuadraticPi.LemniscateAnalytic.squareTheta_add_I\endcsname{L110}
\expandafter\def\csname leanid@QuadraticPi.LemniscateAnalytic.squareTheta_half_zero\endcsname{L111}
\expandafter\def\csname leanid@QuadraticPi.LemniscateAnalytic.omegaMultiplier_lattice_zero\endcsname{L112}
\expandafter\def\csname leanid@QuadraticPi.LemniscateAnalytic.gaussianThetaNorm_add_gaussian\endcsname{L113}
\expandafter\def\csname leanid@QuadraticPi.LemniscateAnalytic.omegaMultiplier_uniform_nonzero_inverse_pow\endcsname{L114}
\expandafter\def\csname leanid@QuadraticPi.LemniscateAnalytic.omegaClearedX_entire\endcsname{L115}
\expandafter\def\csname leanid@QuadraticPi.LemniscateAnalytic.omegaClearedY_entire\endcsname{L116}
\expandafter\def\csname leanid@QuadraticPi.LemniscateAnalytic.omega_cleared_quadratic_growth\endcsname{L117}
\expandafter\def\csname leanid@QuadraticPi.LemniscateAnalytic.omegaMonomialColumn_uniform_growth\endcsname{L118}
\expandafter\def\csname leanid@QuadraticPi.LemniscateAnalytic.omegaThetaArgument_im_sq_le\endcsname{L119}
\expandafter\def\csname leanid@QuadraticPi.LemniscateAnalytic.omegaMonomialColumn_entire\endcsname{L120}
\expandafter\def\csname leanid@QuadraticPi.LemniscateAnalytic.omegaMonomialColumn_eq\endcsname{L121}
\expandafter\def\csname leanid@QuadraticPi.LemniscateActualMinorTransport.clearedMinor_eq_transformed\endcsname{L122}
\expandafter\def\csname leanid@QuadraticPi.LemniscateActualMinorTransport.norm_clearedMinor_eq\endcsname{L123}
\expandafter\def\csname leanid@QuadraticPi.LemniscateAnalytic.omegaInverseConstant_bound\endcsname{L124}
\expandafter\def\csname leanid@QuadraticPi.LemniscateIrrationalityExponent.norm_actual_minor_le\endcsname{L125}
\expandafter\def\csname leanid@QuadraticPi.LemniscateDisplacementExpansion.displacedColumn_expansion\endcsname{L126}
\expandafter\def\csname leanid@QuadraticPi.LemniscateTranslatedDeterminant.det_displacedMatrix_expansion\endcsname{L127}
\expandafter\def\csname leanid@QuadraticPi.LemniscateDisplacement.gridDelta_exp_bound\endcsname{L128}
\expandafter\def\csname leanid@QuadraticPi.LemniscateDisplacement.norm_rowScalar_exp_weighted\endcsname{L129}
\expandafter\def\csname leanid@QuadraticPi.LemniscateTranslatedDeterminant.scalar_product_bound\endcsname{L130}
\expandafter\def\csname leanid@QuadraticPi.LemniscateAnalytic.omegaTagColumn_uniform_growth\endcsname{L131}
\expandafter\def\csname leanid@QuadraticPi.LemniscateAnalytic.omegaTaggedDeterminant_exp_bound\endcsname{L132}
\expandafter\def\csname leanid@OAI.PiExponent.AnalyticCollision.translated_row_determinant_exp_bound\endcsname{L133}
\expandafter\def\csname leanid@QuadraticPi.LemniscateTranslatedDeterminant.displaced_determinant_log_bound\endcsname{L134}
\expandafter\def\csname leanid@QuadraticPi.LemniscateSameMinorBound.fixedCost\endcsname{L135}
\expandafter\def\csname leanid@QuadraticPi.LemniscateAnalytic.final_holomorphic_budget\endcsname{L136}
\expandafter\def\csname leanid@QuadraticPi.LemniscateSameMinorBound.explicit_bound_of_norm_transfer\endcsname{L137}
\expandafter\def\csname leanid@OAI.PiExponent.MatrixCounting.tendsto_rowCount_normalized\endcsname{L138}
\expandafter\def\csname leanid@OAI.PiExponent.MatrixCounting.tendsto_lowIndexCount_normalized\endcsname{L139}
\expandafter\def\csname leanid@QuadraticPi.LemniscateAnalyticRemainder.tendsto_remainder\endcsname{L140}
\expandafter\def\csname leanid@QuadraticPi.LemniscateExponentConditional.tendsto_collisionRate\endcsname{L141}
\expandafter\def\csname leanid@QuadraticPi.LemniscateIrrationalityExponent.sameMinorAnalyticStatement\endcsname{L142}
\expandafter\def\csname leanid@OAI.PiExponent.exists_parameters\endcsname{L143}
\expandafter\def\csname leanid@QuadraticPi.LemniscateSquareParameters.exists_squareDimension_margin\endcsname{L144}
\expandafter\def\csname leanid@QuadraticPi.LemniscateParameterMargin.exists_margin\endcsname{L145}
\expandafter\def\csname leanid@QuadraticPi.LemniscateDataConstruction.exists_scalarData\endcsname{L146}
\expandafter\def\csname leanid@QuadraticPi.LemniscateDataConstruction.exists_approximations_above_thresholds\endcsname{L147}
\expandafter\def\csname leanid@QuadraticPi.LemniscateDataConstruction.exists_small_error_approximations\endcsname{L148}
\expandafter\def\csname leanid@QuadraticPi.cofinal_determinant_bounds_inconsistent\endcsname{L149}
\expandafter\def\csname leanid@QuadraticPi.LemniscateExponentConditional.no_unbounded_approximations_of_analyticStatement\endcsname{L150}
\expandafter\def\csname leanid@QuadraticPi.irrationalityExponent\endcsname{L151}
\expandafter\def\csname leanid@QuadraticPi.irrational_of_eventualLowerBound\endcsname{L152}
\expandafter\def\csname leanid@QuadraticPi.two_mem_approximationExponents\endcsname{L153}
\expandafter\def\csname leanid@QuadraticPi.irrationalityExponent_eq_two_of_eventualLowerBound\endcsname{L154}
\title{The irrationality exponent of the lemniscate period\\[3pt]
\large A determinant proof with a Lean 4 formalization}
\author{Gregory V. Chudnovsky\thanks{Developed in collaboration with OpenAI ChatGPT, including mathematical exposition and formalization assistance. The accompanying Lean source files contain the formal proof.}}
\date{October 10, 2026}
\begin{document}
\maketitle
\begin{abstract}
Let $\Omega=2\int_0^1(1-t^4)^{-1/2}\,dt$. We prove that, for every
real $\nu>2$, there is $Q\geq2$ such that
$q^{-\nu}<|\Omega-p/q|$ for every integer $p$ and every integer $q\geq Q$.
Consequently $\Omega$ is irrational and its irrationality exponent is two.
The proof combines elliptic separated-weight interpolation, arithmetic
denominator estimates, and an analytic determinant bound on a square
complex-multiplication grid. The determinant comparison uses the same
selected source columns on both sides. We give the integral-to-lattice
normalization, the exact local-coordinate and common-multiplier transport,
and the parameter order that makes the two bounds incompatible.
The accompanying Lean 4 development proves the endpoint without additional
hypotheses. Numbered formal references link the mathematical argument to
exact declarations in the complete source distribution; an appendix gives
the pinned build environment and the dependency audit.
\end{abstract}
\tableofcontents
\clearpage

\section{The theorem and the structure of the proof}
\label{sec:main}

The period considered in this paper is fixed by the integral
\begin{equation}\label{eq:Omega-main}
  \Omega=2\int_0^1\frac{dt}{\sqrt{1-t^4}}.
\end{equation}
It is positive and finite. The factor two is part of the definition.
The identification with the square-lattice Weierstrass model is proved
in Section~\ref{norm:section}; it is not an independent normalization
assumption. Classical lemniscatic functions and their complex multiplication
provide the background for this choice of period \cite{coxhyde}.

For an irrational real number $\xi$, write
\begin{equation}\label{eq:mu-main}
 \mu(\xi)=\sup\left\{\nu>0:
 \begin{array}{l}
 \text{infinitely many distinct reduced }p/q\in\Q,\ q\geq2,\\[-1pt]
 0<|\xi-p/q|<q^{-\nu}
 \end{array}\right\}.
\end{equation}
We prove irrationality as well as the upper approximation bound, so the
positive-error convention in \eqref{eq:mu-main} introduces no exceptional
rational case. The Lean definition is a real supremum; the proof establishes
both boundedness above and the Dirichlet lower bound.

\begin{theorem}\label{thm:main}
For every real $\nu>2$ there is an integer $Q\geq2$ such that
\begin{equation}\label{eq:strict-main}
 q^{-\nu}<\left|\Omega-\frac pq\right|
 \qquad(p\in\Z,\ q\in\N,\ q\geq Q).
\end{equation}
In particular $\Omega$ is irrational and $\mu(\Omega)=2$.
\end{theorem}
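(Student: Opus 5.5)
The proof is by contradiction, using a determinant (interpolation) argument in the style of Schneider--Lang/Laurent, adapted to the square lattice with complex multiplication by $\Z[i]$. Fix $\nu>2$ and suppose there are rationals $p/q$ with $q$ arbitrarily large and $|\Omega-p/q|\le q^{-\nu}$. The closing deductions are routine and I do them first. Once \eqref{eq:strict-main} holds for every $\nu>2$, $\Omega$ is irrational: if $\Omega=a/b$, then taking $p=ka$, $q=kb$ with $k$ large gives error $0<q^{-\nu}$, contradicting \eqref{eq:strict-main}. The bound also gives $\mu(\Omega)\le\nu$ for all $\nu>2$. Dirichlet's theorem gives infinitely many reduced $p/q$ with $0<|\Omega-p/q|<q^{-2}$, hence with error $<q^{-\nu'}$ for every $\nu'<2$. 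Together these give $\mu(\Omega)=2$.

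\textbf{Setup.} First I establish the normalization of Section~\ref{norm:section}. The period lattice $\Omega\Z[i]$ carries a Weierstrass function $\wp$ with $g_3=0$ and $g_2=4$, satisfying $\wp(iz)=-\wp(z)$. Its real half period is $\Omega/2$, which I obtain by the substitution $\wp=1/t^2$ in \eqref{eq:Omega-main}. Then $(\wp,\wp')$ and the elliptic logarithm near the origin have local expansions with rational coefficients. I need explicit denominator bounds for these coefficients, of factorial/lcm type.

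\textbf{Determinant.} Given an approximation $p/q$, I form the grid of points $(p/q)(m+ni)$ with $|m|,|n|\le S$. Since $\Omega\approx p/q$, these points are displaced from the lattice points $\Omega(m+ni)$ by $O(S\,q^{-\nu})$. I then take a square interpolation matrix. Its columns are monomials $\wp^{a}\wp'^{b}$ multiplied by $z^c$ (the separated weights). Its rows are jets at the displaced grid points, written in the local coordinate.

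\emph{Nonvanishing.} I select a nonzero maximal minor. The existence of such a minor comes from a zero/multiplicity estimate on $E\times\mathbb G_a$, blown up at the grid, which rules out an auxiliary function vanishing to high order on the whole grid. This step uses the algebraic independence of $z$ and $\wp(z)$ (differential rigidity), together with ampleness of the interpolation bundle.

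\emph{Lower bound.} The selected minor is a Gaussian rational once $q$-powers and coefficient denominators are cleared. Its size is therefore at least $D^{-1}$, where $\log D$ is controlled by $\log q$ times the number of rows plus lcm growth.

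\emph{Upper bound.} For the same selected columns, I expand each displaced column around the exact lattice point. The exact-lattice determinant is governed by the theta-function multiplier. After common-multiplier clearing, the columns become entire functions of order $2$ with quadratic growth. A Schwarz-lemma argument on a disc containing the grid, which uses the high total vanishing order, gives an exponentially small upper bound. The displacement terms contribute at most $q^{-\nu}$ times controlled growth.

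\textbf{Main obstacle.} The hard part is the parameter bookkeeping. I must choose $S$, the monomial degrees, the $z$-degree and the multiplicities as functions of $\log q$ so that the upper bound beats the lower bound exactly when $\nu>2$. The square-lattice structure is what makes this work: the $\Z[i]$ grid has $\asymp S^2$ points, and this two-dimensional count is what brings the threshold down to $2$ rather than a larger exponent. I would first fix the ratios through the limiting counting functions, namely the normalized row counts and the collision rate. Then I would check that the growth factor stays below $3/2$ with a positive margin. Finally I would take $q$ beyond the thresholds so that the lower-order remainders are absorbed. This yields the contradiction for large $q$, and hence the required $Q$.
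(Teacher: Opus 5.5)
Your closing deductions are correct and agree with the paper: irrationality from the eventual bound, $\mu(\Omega)\le\nu$ for each $\nu>2$, and Dirichlet for $\mu(\Omega)\ge2$. The gap is in the core step.

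You build the determinant from a \emph{single} approximation $p/q$ on $E\times\mathbb G_a$, tie the grid, degrees and multiplicities to $\log q$, and assert that the bookkeeping closes exactly when $\nu>2$. Nothing in the sketch produces that threshold. One-approximation interpolation arguments of this kind are not known to come anywhere near $2$; they yield fixed explicit exponents, as for $\pi$.

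The missing idea is the Roth-type structure of the paper. Once $\nu$, the rationals $\theta,A,B,C$ and the real $\eta$ are fixed, one fixes a large dimension $m$. With it come the Gaussian grid of $K=N^2$ points, $N=\lfloor\sqrt{C^m}\rfloor$, and the weights $w_0=B^{-m}$, $v_0=4K\theta^mB^{-m}$. One then chooses $m$ bad approximations $p_i/q_i$ \emph{successively}, so that $w_i=\lceil\log q_i\rceil>T_i(w_1,\dots,w_{i-1})$. These thresholds come from the separated-weight interpolation theorem on $E\times\mathbb G_a^m$ with centers $c_{ji}=\lambda_jp_i/q_i$. In the paper, ``separated weights'' means these $w_i$, not your factor $z^c$.

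The margins that make the two bounds incompatible are exponential in $m$: $K/w_0\le(CB)^m\to0$, $v_0\ge(C\theta/B)^m\to\infty$, and $c_\infty=\kappa\eta^2(B/A)^m/(4(m+1))\to\infty$. They require $A<B<\min(C^{-1},C\theta)$, hence $A^2<\theta$. Together with $G=\nu(A(1-\eta)-\theta)-(1-\theta)>0$, this is solvable precisely when $\nu>2$. Your remark about the square grid is half of this mechanism. It makes the growth cost $\pi(r_*N/\Omega+1)^2\le C_0N^2$ linear in the number $K$ of grid points, which is what gives $A^2<\theta$. But with $m=1$ the volume condition $K\theta^m<1$ already forces $K=1$ once $\theta\ge1/2$. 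So the grid cannot be exploited without many separated approximations.

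The order of limits also differs in an essential way. In the paper all approximations are frozen before the interpolation level is chosen. The contradiction comes from $H\to\infty$ along the cofinal set supplied by the interpolation theorem: multiples of $D$ beyond an $H_0$ that depends on the centers. There $e_H\to0$ and $c_H\to c_\infty$ by exact simplex counting. Hence the lower bound $-(1-\overline W_H)-E_{\rm ar}$ and the upper bound $E_{\rm an}+e_H+\max\{-c_H,-\nu(A(1-\eta)-\overline W_H)\}$ cannot both hold. Sending $q\to\infty$ instead would need a nonvanishing result with explicit dependence on the centers $\lambda p/q$. The ampleness argument you cite gives only a threshold depending on the centers.

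Two analytic points would also need repair.
\begin{itemize}
\item The displacement is not a perturbation of size $q^{-\nu}$. The paper expands the displaced columns exactly, by the binomial theorem, into tagged determinants. Each term is small for one of two reasons. Either the total tag weight is large and the scalars give $-\nu(A(1-\eta)-\overline W_H)$, or many rows share low-weight tags and the collision bound gives $-c_H$. This dichotomy is exactly where $A$, $\eta$, and hence $G$ enter.
\item Rows centred at the lattice points $\Omega(m+ni)$ sit at poles of $\wp$ and at zeros of the theta multiplier $S$. The paper's transport, whose diagonal consists of the nonzero constants $S(z_*+\Omega\lambda_j)^{d_H}$, therefore does not apply there. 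The paper works instead at the diagonal half-period $z_*=(1+i)\Omega/2$. There $\wp=\wp'=0$ and $|S(z_*+\Omega(a+bi))|=e^{\pi b^2}\Theta(0)>0$. The chart $t=y$ becomes a series in $u$ with linear coefficient $-1$ and $\ell(t)=\log(1+u)$.
\end{itemize}
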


The formal endpoints are
\leanref{LemniscateIrrationalityExponent}{QuadraticPi.Omega_eventual_rational_lower_bound},
\leanref{LemniscateIrrationalityExponent}{QuadraticPi.Omega_irrational},
and
\leanref{LemniscateIrrationalityExponent}{QuadraticPi.irrationalityExponent_Omega_eq_two}.
The ordinary reduced-rational supremum is exposed, with its defining set
written out, in
\leanref{LemniscateIrrationalityExponent}{QuadraticPi.Omega_approximation_supremum_eq_two}.

\subsection{Why the same determinant matters}

Assume, to obtain a contradiction, that approximations with error at most
$q^{-\nu}$ occur at arbitrarily large denominators for some $\nu>2$.
One first fixes the interpolation dimension and a square grid, and then
chooses finitely many of those approximations with successively separated
weights $w_i=\lceil\log q_i\rceil$. Elliptic interpolation supplies a nonzero
arithmetic minor at arbitrarily large interpolation levels $H$.
Denominator clearing bounds this minor from below. Complex analysis bounds
the very same minor from above. The strict parameter margins then give a
contradiction.

There are three exact identifications behind this comparison. The integral
\eqref{eq:Omega-main} determines the actual lattice invariant $g_2=4$.
The local elliptic parameter at the diagonal half-period has linear
coefficient $-1$ after the substitution $s=\log(1+u)$. Finally, multiplying
all columns by a common entire factor changes the finite-jet determinant by
an explicit product of its nonzero constant coefficients. Thus neither a
change of period nor a change of selected columns is hidden in the analytic
estimate. Constant transverse displacements are expanded by a finite
binomial identity, before the collision bound is applied.

\subsection{Organization and formal references}

Section~\ref{norm:section} proves the period normalization and the local
analytic/formal comparison. The subsequent sections establish elliptic
separated-weight interpolation, the arithmetic lower bound, the analytic
upper bound, and the final strict comparison. The generic graded-jet,
cohomological, and collision-counting infrastructure is supplied by the
vendored OAI library \cite{oaiformalsnapshot}; the elliptic theorem and
the period-specific bridges are proved in the accompanying project.

References of the form \textsf{[L001]} are clickable links to the exact
Lean declaration index in Appendix~\ref{app:lean-index}. That index gives
each fully qualified name, its source module, and its declaration line in
the frozen source release. These are formal source references, distinct
from the bibliography. The formalization uses Lean 4 \cite{lean4} and
Mathlib \cite{mathlib}. Appendix~\ref{app:formal} records what was checked
and gives commands to reproduce the result.

The result of this paper concerns precisely the period
\eqref{eq:Omega-main}. All approximation constants and denominator
thresholds may depend on the already fixed exponent $\nu$.

\section{The integral-defined period and its actual elliptic germ}
\label{norm:section}

The arithmetic construction uses the curve
\begin{equation}\label{norm:curve}
  E:\quad y^2=x^3-x
\end{equation}
with local parameter $t=y$ at $(0,0)$.  Its identification with the
analytic functions used in the determinant estimate requires the precise
normalization of the period.  We establish this identification from the
defining integral, including the factor of two and the sign of the local
parameter.

\subsection{A positive square-lattice normalization}

Throughout this section,
\begin{equation}\label{norm:Omega}
  \Omega=2\int_0^1\frac{dt}{\sqrt{1-t^4}}.
\end{equation}
The integral converges: on $0\leq t<1$ one has $t^4\leq t$, hence
\[
  0<\frac{1}{\sqrt{1-t^4}}
    \leq\frac{1}{\sqrt{1-t}}.
\]
The right-hand side is integrable, and positivity on $(0,1)$ gives
$\Omega>0$.  Lean treats this as an interval integral; the values assigned
at its endpoints do not affect the integral.
The precise definitions and these results are
\leanref{LemniscateTarget}{QuadraticPi.Lemniscate.Omega},
\leanref{LemniscateTarget}{QuadraticPi.Lemniscate.integrand_intervalIntegrable},
and \leanref{LemniscateTarget}{QuadraticPi.Lemniscate.Omega_pos}.

For $r>0$, put
\[
  \Lambda_r=r(\mathbb Z+i\mathbb Z),\qquad
  \wp_r=\wp_{\Lambda_r},\qquad
  G_k(\Lambda_r)=\sum_{\lambda\in\Lambda_r\setminus\{0\}}\lambda^{-k},
\]
and use the Weierstrass convention
$g_2=60G_4$, $g_3=140G_6$ and
$(\wp')^2=4\wp^3-g_2\wp-g_3$.
The period pair has generators $r$ and $ir$, rather than $2r$ and $2ir$.
It is defined by
\leanref{LemniscatePeriodGeometry}{QuadraticPi.LemniscatePeriod.squarePair}.
Conjugation preserves $\Lambda_r$, so $g_2(\Lambda_r)$, $\wp_r(t)$,
and $\wp'_r(t)$ are real for real $t$ off the lattice.  Rotation by $i$
gives $G_6=-G_6$, hence $g_3(\Lambda_r)=0$, and gives
$\wp_r(iz)=-\wp_r(z)$.  Direct reindexing of the convergent lattice sums
also yields
\begin{equation}\label{norm:scaling}
  g_2(\Lambda_r)=r^{-4}g_2(\Lambda_1).
\end{equation}
These facts are recorded in
\leanref{LemniscateSquareLattice}{QuadraticPi.LemniscateSquareLattice.square_G_conj},
\leanref{LemniscateSquareLattice}{QuadraticPi.LemniscateSquareLattice.square_weierstrassP_ofReal_re},
\leanref{LemniscateSquareLattice}{QuadraticPi.LemniscateSquareLattice.square_derivWeierstrassP_ofReal_re},
\leanref{LemniscatePeriodGeometry}{QuadraticPi.LemniscatePeriod.g₃_eq_zero},
\leanref{LemniscatePeriodGeometry}{QuadraticPi.LemniscatePeriod.weierstrassP_mul_I},
and \leanref{LemniscateSquareLattice}{QuadraticPi.LemniscateSquareLattice.square_g₂_scale}.

\begin{lemma}[Positivity without a period evaluation]\label{norm:positive}
The unit square lattice satisfies
\[
  \operatorname{Re}G_4(\Lambda_1)
    \geq 4-\frac{\pi^4}{36}>0.
\]
In particular $g_2(\Lambda_1)>0$.
\end{lemma}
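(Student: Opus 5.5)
The sum $G_4(\Lambda_1)=\sum_{(m,n)\neq(0,0)}(m+ni)^{-4}$ is real by the conjugation symmetry recorded above ([L009]). So it suffices to bound $\operatorname{Re}G_4(\Lambda_1)$ below by isolating a dominant positive part and bounding everything else in absolute value. Concretely, I will split the lattice into the axis points, those with $mn=0$, and the off-axis points, those with $mn\neq0$.

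On the axes, each of $m$ and $ni$ has fourth power equal to $m^4$ or $n^4$, which is positive. The four points $\pm1,\pm i$ contribute exactly $4$. The remaining axis points contribute
\[
4\sum_{k\ge2}k^{-4}=4\bigl(\zeta(4)-1\bigr)=4\Bigl(\frac{\pi^4}{90}-1\Bigr)\ge 0 .
\]
All axis contributions are positive, so they can only help. The main step is to show that the off-axis sum is at least $-\pi^4/36$ in real part.

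For the off-axis points I will use the crude bound
\[
\operatorname{Re}(m+ni)^{-4}\ge -|m+ni|^{-4}=-(m^2+n^2)^{-2}.
\]
Then the inequality $m^2+n^2\ge 2|mn|$ gives
\[
(m^2+n^2)^{-2}\le \frac{1}{4m^2n^2}.
\]
Summing over all four sign choices and all $m,n\ge1$ gives at most
\[
4\cdot\frac14\,\zeta(2)^2=\frac{\pi^4}{36}.
\]
Combining the two parts yields
\[
\operatorname{Re}G_4\ge 4+4\bigl(\zeta(4)-1\bigr)-\frac{\pi^4}{36}\ge 4-\frac{\pi^4}{36}.
\]
Here I simply drop the nonnegative term $4(\zeta(4)-1)$; if needed one could instead keep $4\zeta(4)$. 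Since $\pi^4<98<144$, the bound is positive, and then $g_2=60G_4>0$.

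The only delicate point, and the one I expect to take the most care in a formal setting, is justifying the rearrangement. The double series must be absolutely summable, so that it can be regrouped into axis and off-axis pieces and $\operatorname{Re}$ can be taken termwise. This follows from $\sum(m^2+n^2)^{-2}<\infty$, which is dominated by the same $\zeta(2)^2$ bound together with the axis $\zeta(4)$ sums. The remaining inputs, Basel's $\zeta(2)=\pi^2/6$ and the numerical check, are routine.
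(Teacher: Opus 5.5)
Your proposal is correct and follows the paper's proof: the paper bounds each off-axis term by the same majorant $e(a,b)=1/(4a^2b^2)$ via $a^2+b^2\geq 2|ab|$, uses that the axis terms are nonnegative with $\pm1,\pm i$ contributing exactly $4$, sums $e$ to $\frac14\bigl(\sum_{a\neq0}a^{-2}\bigr)^2=\pi^4/36$, and finishes with $\pi^2<12$ and the reality of $g_2=60G_4$. The only difference is cosmetic: the paper adds $e(a,b)$ termwise to get the single nonnegative summand $\operatorname{Re}(a+ib)^{-4}+e(a,b)$, instead of regrouping into axis and off-axis sub-sums, which slightly streamlines the rearrangement step you flagged.
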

\begin{proof}
For $a,b\in\mathbb Z$ with $ab\ne0$,
\[
  \bigl|(a+ib)^{-4}\bigr|
   =\frac{1}{(a^2+b^2)^2}\leq\frac{1}{4a^2b^2}.
\]
Define $e(a,b)=1/(4a^2b^2)$ when $ab\ne0$ and $e(a,b)=0$
otherwise.  Then
$\operatorname{Re}(a+ib)^{-4}+e(a,b)\geq0$; the origin is omitted,
and the terms on either axis are nonnegative.  At the four points
$1,-1,i,-i$ the summands are exactly $1$.  Absolute convergence permits
summation, and
\[
  \sum_{a,b}e(a,b)
  =\frac14\left(\sum_{a\ne0}\frac1{a^2}\right)^2
  =\frac{\pi^4}{36}.
\]
It follows that $\operatorname{Re}G_4+\pi^4/36\geq4$.
The elementary bound $\pi^2<12$ makes the lower bound positive.
Reality of $g_2$ and $g_2=60G_4$ finish the proof.
The checked declarations are
\leanref{LemniscateSquareInvariantPositivity}{QuadraticPi.LemniscateSquareInvariantPositivity.G_four_re_lower_bound}
and
\leanref{LemniscateSquareInvariantPositivity}{QuadraticPi.LemniscateSquareInvariantPositivity.unit_g₂_re_pos}.
\end{proof}

Define
\begin{equation}\label{norm:rstar}
  r_0 = \sqrt{\sqrt{g_2(\Lambda_1)/4}}>0.
\end{equation}
Equations \eqref{norm:scaling} and \eqref{norm:rstar} imply
$g_2(\Lambda_{r_0})=4$ and $g_3(\Lambda_{r_0})=0$.
This construction has not used the integral to determine a lattice
invariant.  Its formal endpoints are
\leanref{LemniscateSquareNormalization}{QuadraticPi.LemniscateSquareNormalization.normalizingSide_positive},
\leanref{LemniscateSquareNormalization}{QuadraticPi.LemniscateSquareNormalization.normalizedPair_g₂},
and
\leanref{LemniscateSquareNormalization}{QuadraticPi.LemniscateSquareNormalization.normalizedPair_g₃}.

\subsection{Critical points and the real branch}

We first justify a critical-point fact without dividing by $\wp'$ at one
of its zeros.  For an arbitrary lattice $\Lambda$, differentiation of
the cubic gives
\[
  \wp'(z)\bigl(2\wp''(z)-12\wp(z)^2+g_2\bigr)=0
       \qquad(z\notin\Lambda).
\]
Both factors are analytic on the connected lattice complement.
The first is not identically zero, since $\wp'$ has a pole of order
three at the origin.  The analytic identity theorem therefore gives
\begin{equation}\label{norm:secondode}
  \wp''=6\wp^2-\frac{g_2}{2}
       \quad\hbox{on }\mathbb C\setminus\Lambda,
\end{equation}
including the zeros of $\wp'$.
See
\leanref{LemniscateDifferential}{QuadraticPi.LemniscateDifferential.order_derivWeierstrassP}
and
\leanref{LemniscateDifferential}{QuadraticPi.LemniscateDifferential.deriv_derivWeierstrassP}.

\begin{lemma}[Location of critical points]\label{norm:critical}
If $c\notin\Lambda$ and $\wp'(c)=0$, then $2c\in\Lambda$.
\end{lemma}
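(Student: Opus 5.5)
The plan is to exploit the two elementary symmetries of $\wp$: evenness, $\wp(-z)=\wp(z)$, and periodicity under $\Lambda$. The second-order equation \eqref{norm:secondode} gives a uniqueness argument for the germ at $c$. Since $\wp$ is even, the function $z\mapsto\wp(2c-z)$ is analytic on $\mathbb C\setminus(2c-\Lambda)$. It satisfies the same equation $f''=6f^2-g_2/2$, because the equation is autonomous and invariant under $z\mapsto -z$. At $z=c$ it has the same value $\wp(c)$ as $\wp$, and its derivative there is $-\wp'(c)=0=\wp'(c)$.

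By uniqueness for the analytic second-order initial value problem (the results \leanref{LemniscateODEUniqueness}{QuadraticPi.LemniscateODEUniqueness.analytic_second_order_unique} and \leanref{LemniscateODEUniqueness}{QuadraticPi.LemniscateODEUniqueness.second_order_series_unique}, or directly by computing all Taylor coefficients recursively from \eqref{norm:secondode}), the two functions agree on a neighbourhood of $c$. The identity theorem then extends the agreement $\wp(z)=\wp(2c-z)$ to the connected open set $\mathbb C\setminus(\Lambda\cup(2c-\Lambda))$. This set is the complement of a discrete closed set, hence connected.

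Next compare poles. The function $\wp$ has poles exactly at $\Lambda$, and $z\mapsto\wp(2c-z)$ has poles exactly at $2c-\Lambda$. Since the two meromorphic functions coincide off a discrete set, they are the same meromorphic function and have the same poles. In particular $0\in\Lambda$ is a pole of $\wp(2c-z)$, so $2c\in\Lambda$. The identical pole sets are used only for this one point.

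The main obstacle is the formal uniqueness step at $c$: the order-of-contact argument has to be carried out without dividing by $\wp'(c)=0$. This is precisely why \eqref{norm:secondode} was established on the whole lattice complement. With it, the Taylor coefficients of any analytic solution at $c$ are determined recursively by $f(c)$ and $f'(c)$, since $(n+2)(n+1)a_{n+2}$ is a polynomial in $a_0,\dots,a_n$. I would carry this out via the series-uniqueness lemma and then pass to analytic functions. The pole comparison requires only that $\wp(w)\to\infty$ as $w\to0$, so that $\wp(2c-z)$ is unbounded near $z=2c$. If $2c\notin\Lambda$, then $\wp$ would be bounded near $2c$, and this contradicts the agreement on a punctured neighbourhood.
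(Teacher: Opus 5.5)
Your proposal is correct and follows essentially the same route as the paper: the Taylor recurrence for \eqref{norm:secondode} gives $\wp(z)=\wp(2c-z)$ near $c$, the identity theorem extends this to $\mathbb C\setminus(\Lambda\cup(2c-\Lambda))$, and a pole comparison forces $2c\in\Lambda$. The paper makes that comparison at $0$; you make it at $2c$, or equivalently through equal pole sets. One blemish: evenness and periodicity of $\wp$ are never actually used, in particular not for the analyticity of $\wp(2c-z)$ off $2c-\Lambda$.
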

\begin{proof}
The analytic functions $f(z)=\wp(z)$ and $h(z)=\wp(2c-z)$ solve
the same equation \eqref{norm:secondode} near $c$, and have equal value
and first derivative there.  Analytic uniqueness is proved directly
from the Taylor recurrence.  For a solution $F''=aF^2-b$, with
$F(z_0+s)=\sum_{n\geq0}F_ns^n$, it reads
\[
  (n+2)(n+1)F_{n+2}
   =a\sum_{j=0}^nF_jF_{n-j}-b\,\mathbf1_{n=0}.
\]
Thus $F_0,F_1$ determine all coefficients.  Consequently $f=h$
near $c$.

The set obtained by deleting both $\Lambda$ and $2c-\Lambda$ from
$\mathbb C$ is connected, and the identity theorem extends $f=h$
throughout that set.  If $2c\notin\Lambda$, then $h$ is analytic at
$0$.  In a sufficiently small punctured neighborhood of $0$, the
extended identity would equate it with $\wp$, whose pole has order
two.  This contradiction proves the assertion.
The relevant exact results are
\leanref{LemniscateODEUniqueness}{QuadraticPi.LemniscateODEUniqueness.second_order_series_unique},
\leanref{LemniscateODEUniqueness}{QuadraticPi.LemniscateODEUniqueness.analytic_second_order_unique},
and
\leanref{LemniscateCriticalPoints}{QuadraticPi.LemniscateCriticalPoints.two_mul_mem_lattice_of_deriv_eq_zero}.
\end{proof}

\begin{proposition}[The normalized positive real branch]\label{norm:realbranch}
Let $r>0$ satisfy $g_2(\Lambda_r)=4$, and write
$X(t)=\wp_r(t)$, $Y(t)=\wp'_r(t)$ for real $t$ off the lattice.
Then
\begin{align}
  &X(t)>1,\quad Y(t)<0 &&(0<t<r/2),\label{norm:branchsigns}\\
  &X(r/2)=1,\quad Y(r/2)=0,\qquad
    \lim_{t\downarrow0}X(t)=+\infty.\label{norm:branchends}
\end{align}
Moreover $X'=Y$, $Y'=6X^2-2$, and $Y^2=4X^3-4X$ on the interval.
\end{proposition}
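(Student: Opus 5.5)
We first dispose of the differential identities, and then obtain the sign and endpoint statements from a connectedness argument along the real segment. By the facts recorded before Lemma~\ref{norm:positive}, $X$ and $Y$ are real on $(0,r)$, and $X'=Y$ holds there, since $\wp_r'$ is the complex derivative and is real on the real axis. With $g_2=4$ and $g_3=0$, the Weierstrass equation becomes $Y^2=4X^3-4X$. Equation \eqref{norm:secondode} gives $Y'=6X^2-2$ on the whole interval, including any zeros of $Y$. The limit $\lim_{t\downarrow0}X(t)=+\infty$ follows from the Laurent expansion $\wp_r(t)=t^{-2}+O(t^2)$. From the same expansion, $\wp_r'(t)=-2t^{-3}+O(t)$, so $Y(t)<0$ for small $t>0$.

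Next comes the half-period. Since $\wp_r'$ is odd and $r$-periodic, $\wp_r'(r/2)=-\wp_r'(r/2)$, so $Y(r/2)=0$. Then $0=4X(r/2)^3-4X(r/2)$ forces $X(r/2)\in\{-1,0,1\}$.

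The sign of $Y$ on $(0,r/2)$ comes from Lemma~\ref{norm:critical}. If $Y(c)=0$ with $0<c<r/2$, then $2c\in\Lambda_r$. But $2c$ is real with $0<2c<r$, and $\Lambda_r\cap\mathbb R=r\mathbb Z$, so this is impossible. Hence $Y$ has no zero on $(0,r/2)$. By continuity and the sign near $0$, we get $Y<0$ there. Consequently $X$ is strictly decreasing on $(0,r/2]$, from $+\infty$ down to $X(r/2)\in\{-1,0,1\}$.

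The main obstacle is to show that $X(r/2)=1$ rather than $0$ or $-1$. The cubic relation $Y^2=4X(X-1)(X+1)\ge0$ forces $X(t)\in[-1,0]\cup[1,\infty)$ for every real $t$. Since $X$ is continuous on $(0,r/2]$ and takes arbitrarily large values near $0$, its image is an interval meeting $[1,\infty)$. That interval must therefore lie entirely in $[1,\infty)$, because it cannot cross the gap $(0,1)$. Thus $X(r/2)\ge1$, and hence $X(r/2)=1$.

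Strict monotonicity then gives $X(t)>X(r/2)=1$ for $0<t<r/2$, which completes \eqref{norm:branchsigns} and \eqref{norm:branchends}. The gap argument is the step I would check most carefully. It needs $X$ real-valued and continuous on the whole open segment, which holds because the segment avoids $\Lambda_r$. An alternative route to the same conclusion uses the multiplication-by-$i$ symmetry $\wp_r(iz)=-\wp_r(z)$: it shows that $\wp_r(ir/2)=-X(r/2)$ and that the three half-period values are distinct roots of $4x^3-4x$. The gap argument is more elementary and suffices.
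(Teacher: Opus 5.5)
Your proof is correct and is essentially the paper's argument. It uses the same ingredients: Lemma~\ref{norm:critical} to exclude zeros of $Y$ on $(0,r/2)$, the Laurent pole at $0$, oddness plus $r$-periodicity for $Y(r/2)=0$, the cubic, and continuity. The only difference is the order: the paper first gets $X>1$ (since $X(t)=1$ would force $Y(t)=0$) and then $Y<0$ from $Y'=6X^2-2>0$ and $Y(r/2)=0$, whereas you get $Y<0$ from the sign of $-2t^{-3}$ near $0$ and then $X(r/2)=1$ from the gap $(0,1)$ where $4X^3-4X<0$, which is equally valid.
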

\begin{proof}
The real structure proved above supplies real-valued $X,Y$ and their
real differential equations.  The Laurent decomposition
$\wp_r(t)=t^{-2}+\text{a function analytic at }0$ gives the stated
limit.  No real point in $(0,r)$ belongs to $\Lambda_r$.
Lemma~\ref{norm:critical} consequently excludes a zero of $Y$ in
$(0,r/2)$, since such a zero would imply $2t\in\Lambda_r$ with
$0<2t<r$.  The cubic therefore excludes $X(t)=1$ on this interval.
Continuity, the pole limit, and the intermediate value theorem now
give $X(t)>1$ throughout it.

Oddness and $r$-periodicity of $\wp'_r$ imply $Y(r/2)=0$.
Continuity gives $X(r/2)\geq1$, and the cubic at $r/2$ restricts
$X(r/2)$ to $-1,0,1$, so its value is $1$.
Finally, $Y'=6X^2-2>0$ in the open interval.  Hence $Y$ is strictly
increasing towards its endpoint value $0$, which proves $Y<0$.
All functions are continuous at the right endpoint because it is not
a lattice point.

The individual checked conclusions are
\leanref{LemniscateRealBranch}{QuadraticPi.LemniscateRealBranch.real_weierstrassP_tendsto_atTop},
\leanref{LemniscateRealBranch}{QuadraticPi.LemniscateRealBranch.square_real_deriv_ne_zero},
\leanref{LemniscateRealBranch}{QuadraticPi.LemniscateRealBranch.one_lt_realP},
\leanref{LemniscateRealBranch}{QuadraticPi.LemniscateRealBranch.realP_halfPeriod_eq_one},
and
\leanref{LemniscateRealBranch}{QuadraticPi.LemniscateRealBranch.realDerivP_neg}.
The differential equations are supplied by
\leanref{LemniscateRealCalculus}{QuadraticPi.LemniscateRealCalculus.hasDerivAt_realP},
\leanref{LemniscateRealCalculus}{QuadraticPi.LemniscateRealCalculus.hasDerivAt_realDerivP_of_normalization},
and
\leanref{LemniscateRealCalculus}{QuadraticPi.LemniscateRealCalculus.real_cubic_of_normalization}.
\end{proof}

\begin{theorem}[Identification of the actual period]\label{norm:period}
If $r>0$ and $g_2(\Lambda_r)=4$, then $r=\Omega$.
In particular,
\begin{equation}\label{norm:invariants}
  g_2\bigl(\Omega(\mathbb Z+i\mathbb Z)\bigr)=4,
  \qquad
  g_3\bigl(\Omega(\mathbb Z+i\mathbb Z)\bigr)=0.
\end{equation}
\end{theorem}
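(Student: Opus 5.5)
We use Proposition~\ref{norm:realbranch} to compute $r/2$ as an integral and then change variables to match \eqref{norm:Omega}. On $(0,r/2)$ we have $X'=Y<0$, so $X$ is a strictly decreasing $C^1$ bijection from $(0,r/2]$ onto $[1,\infty)$. The endpoints come from \eqref{norm:branchends}: $X\to+\infty$ as $t\downarrow0$ and $X(r/2)=1$. On the open interval, $Y<0$ together with $Y^2=4X^3-4X$ gives
\[
 Y=-2\sqrt{X^3-X}.
\]
Hence
\[
 \frac{d}{dt}\Bigl(-\int_{X(t)}^{\infty}\frac{dx}{2\sqrt{x^3-x}}\Bigr)
 =\frac{-X'(t)}{2\sqrt{X^3-X}}=1 .
\]
The function $F(t)=\int_{X(t)}^{\infty}\frac{dx}{2\sqrt{x^3-x}}$ therefore satisfies $F'=-1$ on $(0,r/2)$.

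Next we compute the values of $F$ at the endpoints. As $t\downarrow0$ we have $F(t)\to0$, because the tail $\int_M^\infty x^{-3/2}\,dx$ tends to $0$. At the other end, $F(t)\to\int_1^\infty\frac{dx}{2\sqrt{x^3-x}}$ as $t\uparrow r/2$. This limit is finite: near $x=1$ the integrand behaves like $(x-1)^{-1/2}/(2\sqrt2)$. Integrating $F'=-1$ on compact subintervals and passing to the limit gives
\[
 \frac r2=\int_1^\infty\frac{dx}{2\sqrt{x^3-x}}.
\]
The delicate point is the singular endpoint $x=1$, where $Y=0$. The cleanest way to handle it is to work on $[\epsilon,r/2-\epsilon]$ and use monotone convergence of the integrals of the positive integrand. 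An alternative is to use the reciprocal square-root derivative lemma, \leanref{x}{QuadraticPi.Lemniscate.reciprocal_sqrt_hasDerivAt}, in the style of \leanref{x}{QuadraticPi.Lemniscate.halfPeriod_eq_integral_of_branch}.

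We then substitute $x=t^{-2}$, with $t\in(0,1)$ and $dx=-2t^{-3}\,dt$. This gives $\sqrt{x^3-x}=t^{-3}\sqrt{1-t^4}$, so
\[
 \int_1^\infty\frac{dx}{2\sqrt{x^3-x}}
 =\int_0^1\frac{2t^{-3}\,dt}{2t^{-3}\sqrt{1-t^4}}
 =\int_0^1\frac{dt}{\sqrt{1-t^4}}=\frac\Omega2 .
\]
Hence $r=\Omega$. The substitution is again justified first on compact subintervals and then by monotone limits. We expect this bookkeeping of improper integrals at both singular endpoints to be the main technical obstacle; the identities themselves are straightforward.

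Finally, Lemma~\ref{norm:positive} and \eqref{norm:rstar} provide $r_0>0$ with $g_2(\Lambda_{r_0})=4$ and $g_3(\Lambda_{r_0})=0$. The first part of the theorem then gives $r_0=\Omega$, so $\Lambda_{r_0}=\Omega(\Z+i\Z)$, which is \eqref{norm:invariants}.
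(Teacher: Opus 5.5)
Your argument is correct once one sign is repaired. It is the paper's argument with the substitution $x=u^{-2}$ performed at a different stage.

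\textbf{The sign.} By the chain rule,
\[
 \frac{d}{dt}\int_{X(t)}^{\infty}\frac{dx}{2\sqrt{x^3-x}}
 =\frac{-X'(t)}{2\sqrt{X(t)^3-X(t)}}=1
 \qquad(0<t<r/2),
\]
so $F'=+1$. The extra minus sign on the left of your display is wrong, and so is the claim $F'=-1$. Monotonicity confirms this: $X$ decreases, so $F$ increases from $0$ to $\int_1^\infty dx/(2\sqrt{x^3-x})$. Taken literally, $F'=-1$ would make this positive integral equal to $-r/2$. Your stated conclusion $r/2=\int_1^\infty dx/(2\sqrt{x^3-x})$ therefore needs the corrected sign.

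\textbf{The route.} The paper substitutes on the function side before integrating. It sets $u(0)=0$ and $u(t)=X(t)^{-1/2}$ for $0<t\leq r/2$. Then $u$ is continuous on the compact interval $[0,r/2]$ with $u(r/2)=1$, and $u'=-Y/(2X^{3/2})=\sqrt{1-u^4}$ in the interior, where $Y<0$ selects the root. Put $\Phi(v)=\int_0^v(1-s^4)^{-1/2}\,ds$ (the paper's $F$), which is continuous on $[0,1]$. The composite $\Phi\circ u$ is continuous on $[0,r/2]$ with derivative $1$ inside, so $r/2=\Phi(1)$ immediately.

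This ordering removes exactly the bookkeeping you flag as the main obstacle. There is no integral over $[1,\infty)$, no tail estimate as $t\downarrow0$, and no change of variables inside an improper integral. The single singular endpoint $s=1$ is absorbed by continuity of $\Phi$.

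Your ordering is still valid. The function $G(y)=\int_y^\infty dx/(2\sqrt{x^3-x})$ is continuous on $[1,\infty)$ and tends to $0$ at infinity. Hence both endpoint limits of $F=G\circ X$ follow from continuity alone, and monotone convergence is not needed. The remaining extra step is the change of variables in the improper integral, done on $[1+\epsilon,M]$ and followed by limits. Your final deduction for $r_0$ agrees with the paper's.
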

\begin{proof}
Use the branch of Proposition~\ref{norm:realbranch}, and define
\[
  u(0)=0,\qquad u(t)=X(t)^{-1/2}\quad(0<t\leq r/2).
\]
The pole limit proves continuity at $0$.  The remaining branch facts
give continuity on $[0,r/2]$, the value $u(r/2)=1$, and $0<u(t)<1$
in its interior.  Direct differentiation gives
\[
  u'(t)=\frac{-Y(t)}{2X(t)^{3/2}}>0,
  \qquad
  u'(t)^2=\frac{4X(t)^3-4X(t)}{4X(t)^3}=1-u(t)^4.
\]
The sign $Y(t)<0$ therefore fixes the square root:
$u'(t)=\sqrt{1-u(t)^4}$.
This sign-sensitive step is
\leanref{LemniscateReciprocalBranch}{QuadraticPi.Lemniscate.reciprocal_sqrt_hasDerivAt}.

Let $F(v)=\int_0^v(1-s^4)^{-1/2}\,ds$.  Convergence of the integral
makes $F$ continuous on $[0,1]$, and the fundamental theorem of
calculus gives $F'(v)=(1-v^4)^{-1/2}$ on $(0,1)$.
Thus $(F\circ u)'=1$ on $(0,r/2)$, while $F\circ u$ is continuous
on the closed interval.  Integrating the constant derivative yields
\[
  \frac r2=F(1)-F(0)=\int_0^1\frac{ds}{\sqrt{1-s^4}},
\]
so $r=\Omega$.
The endpoint-safe integral argument is
\leanref{LemniscateInverseIntegral}{QuadraticPi.Lemniscate.halfPeriod_eq_integral_of_branch};
its real Weierstrass specialization is
\leanref{LemniscateReciprocalBranch}{QuadraticPi.Lemniscate.period_eq_Omega_of_real_branch}.
Apply this argument to $r_0$ from \eqref{norm:rstar} to obtain
$r_0=\Omega$ and \eqref{norm:invariants}.
The unconditional endpoint is
\leanref{LemniscateOmegaNormalization}{QuadraticPi.LemniscateOmegaNormalization.omegaPair_g₂_eq_four},
proved through
\leanref{LemniscateOmegaNormalization}{QuadraticPi.LemniscateOmegaNormalization.period_eq_Omega_of_normalization}
and
\leanref{LemniscateOmegaNormalization}{QuadraticPi.LemniscateOmegaNormalization.normalizingSide_eq_Omega}.
\end{proof}

\subsection{The diagonal half-period and the formal chart}

From now on let $\Lambda=\Omega(\mathbb Z+i\mathbb Z)$,
$\wp=\wp_\Lambda$, and
\begin{equation}\label{norm:zT}
  z_T=\frac{(1+i)\Omega}{2}.
\end{equation}
This point is not in $\Lambda$.
Because $iz_T+\Omega=z_T$ and $\wp(iz)=-\wp(z)$,
periodicity gives $\wp(z_T)=0$.  Oddness and periodicity similarly
give $\wp'(z_T)=0$.  These are exact values of the actual lattice
functions, proved in
\leanref{LemniscatePeriodGeometry}{QuadraticPi.LemniscatePeriod.omega_diagonalHalfPeriod_not_mem},
\leanref{LemniscatePeriodGeometry}{QuadraticPi.LemniscatePeriod.omega_weierstrassP_diagonalHalfPeriod},
and
\leanref{LemniscatePeriodGeometry}{QuadraticPi.LemniscatePeriod.omega_derivWeierstrassP_diagonalHalfPeriod}.
Notice that $z_T$ is the diagonal half-period; the real endpoint
$\Omega/2$ used above is a different half-period and has $\wp(\Omega/2)=1$.

Let $x_0(t)\in\mathbb Z[[t]]$ be the centered formal solution
\begin{equation}\label{norm:formalx}
  x_0(t)^3-x_0(t)=t^2,\qquad x_0(0)=0.
\end{equation}
It exists integrally because $X^3-X$ has invertible linear
coefficient $-1$: take its compositional inverse and substitute $t^2$.
Define $\ell(t)\in\mathbb Q[[t]]$ by
\begin{equation}\label{norm:formallog}
  \ell(0)=0,\qquad
  \ell'(t)\bigl(3x_0(t)^2-1\bigr)=1.
\end{equation}
The inverse of $3x_0^2-1$ has integral coefficients, so the coefficient
of $t^n$ in $\ell$ has denominator dividing $n$ for $n\geq1$.
In particular $\ell'(0)=-1$.
These are the actual series in the arithmetic construction, defined
by
\leanref{LemniscateLocal}{QuadraticPi.LemniscateLocal.localX}
and
\leanref{LemniscateLocal}{QuadraticPi.LemniscateLocal.ellipticLog},
with identities
\leanref{LemniscateLocal}{QuadraticPi.LemniscateLocal.localX_equation},
\leanref{LemniscateLocal}{QuadraticPi.LemniscateLocal.ellipticLog_derivative_equation},
and
\leanref{LemniscateLocal}{QuadraticPi.LemniscateLocal.ellipticLog_coeff_degree_cleared}.

\begin{proposition}[Identification with the actual germ]\label{norm:germ}
Let $A(s)$ and $B(s)$ be the Taylor series at $s=0$ of
\[
  \wp(z_T+s),\qquad \frac12\wp'(z_T+s),
\]
respectively.  After mapping the rational and integral coefficients
of $x_0,\ell$ into $\mathbb C$, one has
\begin{align}
  &A(0)=B(0)=0,\qquad A^3-A=B^2,\label{norm:germcubic}\\
  &A'=2B,\qquad B'=3A^2-1,\qquad [s]B=-1,\label{norm:germode}\\
  &x_0(B(s))=A(s),\qquad \ell(B(s))=s.\label{norm:germeval}
\end{align}
\end{proposition}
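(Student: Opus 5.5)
We first establish the analytic identities \eqref{norm:germcubic} and \eqref{norm:germode} from the actual lattice functions, and then deduce \eqref{norm:germeval} by formal uniqueness.

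\textbf{The analytic identities.} By Theorem~\ref{norm:period}, $g_2(\Lambda)=4$ and $g_3(\Lambda)=0$. Hence on $\mathbb C\setminus\Lambda$ we have $(\wp')^2=4\wp^3-4\wp$, and by \eqref{norm:secondode} also $\wp''=6\wp^2-2$. Since $z_T\notin\Lambda$, both functions are analytic near $z_T$, so $A$ and $B$ are convergent power series. Translating the identities gives
\[
  4B^2=4A^3-4A,\qquad A'=2B,\qquad 2B'=6A^2-2 .
\]
The values $A(0)=\wp(z_T)=0$ and $B(0)=\tfrac12\wp'(z_T)=0$ were recorded above. Evaluating $B'=3A^2-1$ at $s=0$ then gives $[s]B=B'(0)=-1$. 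All Taylor-series statements pass from functions to series because the identities hold on a disc around $z_T$.

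\textbf{The identity $x_0(B)=A$.} Since $B(0)=0$ and $[s]B=-1$, the series $B$ is a formal parameter: substitution $t\mapsto B(s)$ defines an injective ring map $\mathbb C[[t]]\to\mathbb C[[s]]$. Set $\tilde x=x_0(B(s))$. Then $\tilde x^3-\tilde x=B^2$ and $\tilde x(0)=0$, and $A$ satisfies the same two conditions. We claim the centered solution of $X^3-X=c$ with $c\in s\,\mathbb C[[s]]$ is unique. Indeed, if $X_1,X_2$ are two solutions, then
\[
  (X_1-X_2)\bigl(X_1^2+X_1X_2+X_2^2-1\bigr)=0,
\]
and the second factor has constant term $-1$, so it is a unit. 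Hence $A=x_0(B)$. This is just the inverse-function argument already used to construct $x_0$ integrally.

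\textbf{The identity $\ell(B)=s$.} Let $L(s)=\ell(B(s))$, so $L(0)=0$. The chain rule for formal substitution, together with \eqref{norm:formallog} and $x_0(B)=A$, gives
\[
  L'(s)=\ell'(B)\,B'=\frac{B'}{3A^2-1}=1,
\]
using $B'=3A^2-1$ from \eqref{norm:germode}. In characteristic zero, $L'=1$ and $L(0)=0$ force $L=s$.

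\textbf{Expected obstacle.} The main difficulty is bookkeeping rather than mathematics. First, the composition of formal series has to be applied in $\mathbb C[[s]]$ after mapping the coefficients of $x_0$ and $\ell$ into $\mathbb C$. Second, the Taylor series of the actual analytic functions at $z_T$ must be shown to satisfy the formal identities; this requires that products, derivatives and substitutions of convergent germs commute with taking Taylor series. The factor $\tfrac12$ in $B$ and the sign $[s]B=-1$ are what make the normalization in \eqref{norm:formallog}, namely $\ell'(0)=-1$, match exactly. These should be checked carefully rather than absorbed into a rescaling.
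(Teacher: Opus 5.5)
Your proposal is correct and follows essentially the same route as the paper. Both arguments Taylor-expand the normalized Weierstrass identities, using $g_2=4$, $g_3=0$ and the division-free equation $\wp''=6\wp^2-2$. Both then get $x_0(B)=A$ from the unit factor $U^2+UV+V^2-1$, whose constant term is $-1$, and get $\ell(B)=s$ from $(\ell(B))'=\ell'(B)(3A^2-1)=1$ together with the vanishing constant coefficient.
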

\begin{proof}
Analyticity at $z_T$, Theorem~\ref{norm:period}, and the actual
Weierstrass equations give \eqref{norm:germcubic}--\eqref{norm:germode}
under Taylor expansion.  The derivative identities are identities
of full formal power series.  In particular, no cancellation of
the zero scalar $B(0)$ is involved.

The centered solution of $x^3-x=B^2$ is unique.  Indeed, if $U,V$
are two centered solutions, then
\[
 (U-V)(U^2+UV+V^2-1)=0,
\]
and the second factor has constant coefficient $-1$, so is a unit.
This proves $x_0(B)=A$.  Formal differentiation, using
\eqref{norm:formallog}, now gives
\[
  \bigl(\ell(B)\bigr)'
    =\ell'(B)(3A^2-1)=1.
\]
Its constant coefficient is zero, proving $\ell(B)=s$.
The actual Taylor-series definitions are
\leanref{LemniscateTaylorBridge}{QuadraticPi.LemniscateTaylorBridge.omegaXSeries}
and
\leanref{LemniscateTaylorBridge}{QuadraticPi.LemniscateTaylorBridge.omegaYSeries}.
The unconditional specialization is supplied by
\leanref{LemniscateOmegaGerm}{QuadraticPi.LemniscateOmegaGerm.omega_series_cubic},
\leanref{LemniscateOmegaGerm}{QuadraticPi.LemniscateOmegaGerm.omegaYSeries_derivative},
\leanref{LemniscateOmegaGerm}{QuadraticPi.LemniscateOmegaGerm.omega_localX_subst_eq},
\leanref{LemniscateOmegaGerm}{QuadraticPi.LemniscateOmegaGerm.omega_ellipticLog_subst_eq},
and
\leanref{LemniscateOmegaGerm}{QuadraticPi.LemniscateOmegaGerm.omegaYSeries_linear}.
The identity $A'=2B$ is
\leanref{LemniscateTaylorBridge}{QuadraticPi.LemniscateTaylorBridge.omegaXSeries_derivative}.
\end{proof}

\subsection{The parameter used by the contour rows}

The analytic determinant uses the parameter $u$ with
$s=\log(1+u)$, where the principal complex logarithm is analytic
near $u=0$.  To identify its coefficients, define the actual functions
\begin{equation}\label{norm:logfunctions}
  a(u)=\wp\bigl(z_T+\log(1+u)\bigr),\qquad
  b(u)=\frac12\wp'\bigl(z_T+\log(1+u)\bigr).
\end{equation}
Write $a_*,b_*,L$ for their Taylor series at zero and for the Taylor
series of $\log(1+u)$, respectively.  They satisfy
\begin{equation}\label{norm:loggerm}
  a_*^3-a_*=b_*^2,\qquad
  b_*'=(3a_*^2-1)L',\qquad
  x_0(b_*)=a_*,\qquad \ell(b_*)=L.
\end{equation}
All constant coefficients vanish, and
\begin{equation}\label{norm:loglinear}
  [u]L=1,\qquad [u]b_*=-1.
\end{equation}
For completeness, the second identity in \eqref{norm:loggerm}
is the chain rule applied to $\wp'/2$, whose derivative is
$3\wp^2-1$ after normalization.  Evaluating its constant coefficient
gives $[u]b_*=-1$.  Centered uniqueness gives $x_0(b_*)=a_*$,
and differentiating $\ell(b_*)$ gives $L'$; equality of their
constant coefficients then gives the last identity.
The logarithm derivative is proved for $|u|<1$, and the elliptic
compositions are analytic on a neighborhood of zero because $z_T$
is not a lattice point.  Only their local Taylor identities are
asserted here; the later contour estimates use the pole-cleared
entire functions.

The exact function and series definitions are in
\leanref{LemniscateLogParameterChart}{QuadraticPi.LemniscateLogParameterChart.logParameter},
\leanref{LemniscateLogParameterChart}{QuadraticPi.LemniscateLogParameterChart.xFunction},
\leanref{LemniscateLogParameterChart}{QuadraticPi.LemniscateLogParameterChart.yFunction},
\leanref{LemniscateLogParameterChart}{QuadraticPi.LemniscateLogParameterChart.xSeries},
\leanref{LemniscateLogParameterChart}{QuadraticPi.LemniscateLogParameterChart.ySeries},
and
\leanref{LemniscateLogParameterChart}{QuadraticPi.LemniscateLogParameterChart.ellSeries}.
The unconditional identities used in the proof are
\leanref{LemniscateOmegaGerm}{QuadraticPi.LemniscateOmegaGerm.log_series_cubic},
\leanref{LemniscateOmegaGerm}{QuadraticPi.LemniscateOmegaGerm.log_ySeries_derivative},
\leanref{LemniscateOmegaGerm}{QuadraticPi.LemniscateOmegaGerm.log_localX_subst},
\leanref{LemniscateOmegaGerm}{QuadraticPi.LemniscateOmegaGerm.log_ellipticLog_subst},
and
\leanref{LemniscateOmegaGerm}{QuadraticPi.LemniscateOmegaGerm.log_ySeries_linear}.
Together with
\leanref{LemniscateLogParameterChart}{QuadraticPi.LemniscateLogParameterChart.ellSeries_linear},
they give the actual coefficient substitution used to transport the
arithmetic jet matrix.  In particular its base-parameter linear
coefficient is exactly $-1$, of absolute value one.

\section{Elliptic separated-weight interpolation}
\label{epa:section}

This section states the interpolation theorem with its full quantifier order
and records the geometric argument used in the formal proof.  Throughout,
\[
 E:\quad y^2=x^3-x,
 \qquad
 R_m=\mathbb C[x,y,X_1,\ldots,X_m]/(y^2-x^3+x).
\]
We use the unique normal representatives
\(\sum a_{a,b,\alpha}x^a y^b X^\alpha\), where \(b\in\{0,1\}\).
The elliptic source degree is the pole degree \(2a+3b\).  In particular,
\(x\) and \(y\) are not two independent directions of the source filtration.

\subsection{The formal chart and the exact statement}
\label{epa:statement-subsection}

Let \(x_*(t)\in\mathbb Z[[t]]\) be the unique series with constant term zero
satisfying
\begin{equation}\label{epa:formal-cubic}
 x_*(t)^3-x_*(t)=t^2.
\end{equation}
It is obtained by composing the integral compositional inverse of
\(X^3-X\) with \(t^2\); the coefficient of \(X\) in \(X^3-X\) is the unit
\(-1\).  Set
\begin{equation}\label{epa:formal-log}
 J(t)=3x_*(t)^2-1,
 \qquad V(t)=J(t)^{-1}\in\mathbb Z[[t]],
 \qquad
 \ell(t)=\sum_{n\geq1}\frac{[t^{n-1}]V(t)}{n}\,t^n.
\end{equation}
Thus
\begin{equation}\label{epa:formal-differential}
 \ell(0)=0,\qquad \ell'(0)=-1,\qquad
 (3x_*(t)^2-1)\ell'(t)=1.
\end{equation}
These are the actual formal series in the code, rather than unspecified
solutions of a formal differential equation.
See \leanref{LemniscateLocal}{QuadraticPi.LemniscateLocal.localX_equation}
and
\leanref{LemniscateLocal}{QuadraticPi.LemniscateLocal.ellipticLog_derivative_equation}.

Fix integers \(m,K\geq1\) and rational numbers \(w_0,v_0,\theta\) such that
\begin{equation}\label{epa:base-data}
 w_0>0,\quad v_0>0,\quad 0<\theta<1,\quad
 K\theta^m<1,\quad
 K\frac{w_0}{v_0}\theta^m<1.
\end{equation}
These are precisely the fields and inequalities of
\leanref{SeparatedWeightPA}{QuadraticPi.SeparatedWeightPA.BaseData}.
For positive rational additive weights \(w=(w_1,\ldots,w_m)\), put
\begin{align}
 \mathcal P_H(w)
 &=\operatorname{span}_{\mathbb C}
 \left\{x^a y^bX^\alpha:
 b\in\{0,1\},\quad
 w_0(2a+3b)+\sum_{i=1}^m w_i\alpha_i\leq H\right\},
 \label{epa:source-space}\\
 \mathcal B_H(w)
 &=\left\{(k,\beta)\in\mathbb N\times\mathbb N^m:
 v_0k+\sum_{i=1}^m\frac{w_i}{\theta}\beta_i<H\right\}.
 \label{epa:target-packet}
\end{align}
The source cutoff is weak and the target cutoff is strict.  For centers
\(c_j=(c_{j1},\ldots,c_{jm})\in\mathbb C^m\), define the linear packet map
\begin{equation}\label{epa:packet-map}
 \begin{split}
 \mathcal E_{H,w,c}:\mathcal P_H(w)&\longrightarrow
       \prod_{j=1}^K\mathbb C^{\mathcal B_H(w)},\\
 P&\longmapsto
 \left([t^ku^\beta]
 P\bigl(x_*(t),t,(c_{ji}+u_i+\ell(t))_{i=1}^m\bigr)
 \right)_{j,(k,\beta)}.
 \end{split}
\end{equation}
The cubic relation is respected by this substitution, as proved by
\leanref{LemniscateInterpolation}{QuadraticPi.LemniscateInterpolation.formalJet_cubic_relation}.

\begin{theorem}[Elliptic separated-weight interpolation, EPA]
\label{epa:theorem}
For every choice of data satisfying \eqref{epa:base-data}, there exist
functions
\[
 T_i:\mathbb Q^{i-1}\longrightarrow\mathbb Q
 \qquad(1\leq i\leq m)
\]
with the following property.  Whenever the positive rational weights satisfy
\[
 w_i>T_i(w_1,\ldots,w_{i-1})\qquad(1\leq i\leq m),
\]
there is an integer \(D>0\) such that, for every center configuration satisfying
\[
 c_{ji}\neq c_{j'i}\qquad
 (j\neq j',\ 1\leq i\leq m),
\]
there is an integer \(H_0\) for which \(\mathcal E_{H,w,c}\) is surjective
whenever \(H\geq H_0\) and \(D\mid H\).
\end{theorem}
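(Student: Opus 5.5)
Before any geometry, we reduce surjectivity to a statement about a line bundle on a product. The source space $\mathcal P_H(w)$ is the space of sections of a weighted line bundle on $E\times\mathbb P^1\times\cdots\times\mathbb P^1$: sections of $\mathcal O_E(k_0\cdot O)$ with $k_0$ determined by $H/w_0$, tensored with $\mathcal O_{\mathbb P^1}(\lfloor H/w_i\rfloor)$ in each factor. The normal representatives $x^ay^b$ with $b\in\{0,1\}$ give a basis of $L(k_0O)$ by pole order $2a+3b$, which matches the definition of $\mathcal P_H(w)$. The packet map \eqref{epa:packet-map} is evaluation of jets along the $K$ translated curves $\Gamma_j=\{(p,c_j+\ell(p)\mathbf 1)\}$ near the point $(0,0)$. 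The target $\mathcal B_H(w)$ is then a weighted jet scheme $Z_H$ supported at the $K$ points $(T,c_j)$, where $T=(0,0)\in E$. Surjectivity of $\mathcal E_{H,w,c}$ is equivalent to the vanishing of $H^1(\mathcal L_H\otimes\mathcal I_{Z_H})$. I would choose $D$ so that all cutoffs $H/w_0$ and $H/w_i$ are integers when $D\mid H$; this is possible because the weights are rational. With that choice the bundles form a genuine multiple sequence $\mathcal L_H=\mathcal L^{\otimes H/D}$.

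Next we realize the weighted ideals geometrically and seek ampleness. The ideal $\mathcal I_{Z_H}$ is the $H$-th power of a weighted valuation ideal. After a weighted blowup at each point $(T,c_j)$, taken in the coordinates $(t,u_i)$ with $u_i$ having weight $w_i/\theta$ and $t$ weight $v_0$, it becomes $\mathcal O(-H\,\mathcal E_j)$ for exceptional divisors $\mathcal E_j$. These coordinates are straightened by the change of variables $u_i\mapsto u_i+\ell(t)$, which is an automorphism of the formal neighborhood. The theorem therefore follows from Serre vanishing, which gives the $H_0$, provided the $\mathbb Q$-divisor $\pi^*\mathcal L-\sum_j\mathcal E_j$ is ample on the blowup. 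The conditions on the centers are used exactly here. Because $c_{ji}\ne c_{j'i}$, the $K$ points have pairwise distinct projections to every $\mathbb P^1$-factor, so the blowups are disjoint and the curves $\Gamma_j$ do not meet each other in any coordinate projection.

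To prove ampleness I would use Nakai--Moishezon or Seshadri-type estimates, inducting on $m$; this induction is what produces the thresholds $T_i(w_1,\dots,w_{i-1})$. The local volume of a weighted exceptional divisor is proportional to $v_0^{-1}\prod_i(\theta/w_i)$, while the global volume is proportional to $w_0^{-1}\prod_i w_i^{-1}$, up to constants from $\deg O=1$. Their ratio involves $K\theta^m$ and $Kw_0\theta^m/v_0$, so \eqref{epa:base-data} gives strict positivity of the top self-intersection. For subvarieties $V$ one needs positivity of $(\pi^*\mathcal L-\sum\mathcal E_j)^{\dim V}\cdot V$. Varieties contained in coordinate fibres are handled by the inductive hypothesis on fewer factors. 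The remaining ones, namely products, and those dominating $E$ or containing some $\Gamma_j$, are handled by making $w_i$ large relative to $w_1,\dots,w_{i-1}$. This separation makes the intersection numbers lexicographically dominated by the newest factor, where the contribution is $w_i$ versus $w_i/\theta$ times local multiplicity.

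The main obstacle is the curves $\Gamma_j$ themselves, and more generally subvarieties built from the elliptic logarithm direction $\ell(t)\mathbf 1$. These are the only potentially negative curves, since the jets are taken along them with high order. A rigidity statement is needed: no algebraic curve in $E\times(\mathbb P^1)^m$ is tangent to infinite order to $t\mapsto(t,c_j+\ell(t)\mathbf 1)$, because $\ell$ is the transcendental elliptic logarithm and its differential $dx/(3x^2-1)\cdot$ is not exact over any finite extension. With that, the multiplicity of such curves along $\mathcal E_j$ is bounded by a zero estimate, and the separated weights make the degree term win. I expect most of the technical effort to be spent on this zero estimate and on its uniformity in $H$.
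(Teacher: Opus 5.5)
There is a genuine gap, and it starts with your first reduction. The space $\mathcal P_H(w)$ is cut out by one joint inequality $w_0(2a+3b)+\sum_i w_i\alpha_i\le H$, so it is a weighted simplex. By contrast, $H^0\bigl(E\times(\mathbb P^1)^m,\mathcal O_E(k_0O)\boxtimes\bigotimes_i\mathcal O_{\mathbb P^1}(\lfloor H/w_i\rfloor)\bigr)$ is spanned by the box $\{2a+3b\le k_0,\ \alpha_i\le\lfloor H/w_i\rfloor\ \forall i\}$, which has about $(m+1)!$ times as many monomials.

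Vanishing of $H^1(\mathcal L_H\otimes\mathcal I_{Z_H})$ on your product therefore gives surjectivity only from this larger space. That does not imply surjectivity of $\mathcal E_{H,w,c}$ on $\mathcal P_H(w)$. The mismatch also appears in your volume count. On the product the top self-intersection is already positive when $K\frac{w_0}{v_0}\theta^m<(m+1)!$, so \eqref{epa:base-data} is not the condition your model tests.

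You cannot repair this by choosing a different line bundle on the same product. Every line bundle on $E\times(\mathbb P^1)^m$ has sections of the form $H^0(E,\mathcal M)\otimes\bigotimes_iH^0(\mathbb P^1,\mathcal O(n_i))$, which is always box-shaped. What is needed is the projective monomial compactification at a common radius $D$. Its hyperplane bundle $A$ has sections of $A^{\otimes n}$ represented on the affine chart by normal polynomials of source weight at most $nD$. The paper blows this space up along the fixed ideal $I=\prod_jI_j$ of \eqref{epa:fixed-center-ideal}, which is algebraic thanks to the truncation $\ell_{<T}$ with $Tv_0>w_i/\theta$. On that space the product structure your plan relies on is not available: the factorwise induction on $m$, the coordinate fibres, and the lexicographic domination by the newest $\mathbb P^1$.

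Second, once the ambient space is corrected, ampleness of $A\otimes J$ carries all the content, and your proposal only names it. The paper does not run Nakai--Moishezon over all subvarieties. The degree formulas $\deg_CA=D\,d_\rho(z)$ and $\deg_CJ=-D\sum_P\tau_P$ reduce everything to the uniform curve inequality $(1+\sigma)\sum_P\tau_P\le d_\rho(z)$, with $\sigma$ fixed before the centers. An auxiliary ample twist $A^{\otimes a_0}\otimes J$ and a numerical criterion then give ampleness.

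A qualitative statement that no algebraic curve is tangent to infinite order to $\Gamma_j$ does not produce such a uniform $\sigma$. The paper obtains $\sigma$ by contradiction, in these steps:
\begin{itemize}
\item It builds an auxiliary polynomial vanishing below $(1+3\sigma)n$, using the inflated volume inequalities \eqref{epa:inflated-volumes}.
\item It extrapolates by derivatives along $\mathcal D_0,\dots,\mathcal D_m$, using \eqref{epa:growth-margin}.
\item A persistent-prime length argument gives $\prod_{i\in A}W_i\le C_B(\sigma)\prod_{j\in A'}V_j$, which the thresholds $T_i$ are designed to violate through \eqref{epa:product-separation}.
\item Only then does it use the non-exactness of $dx/2y$ on finite covers, to force the elliptic projection of the curve to be constant.
\end{itemize}
That last step pushes the curve into the additive fibre, where the second volume inequality and coordinatewise distinctness finish the argument.

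Your claim that curves near the $\Gamma_j$ are the only potentially negative ones misses exactly these additive-fibre curves through several centers. An induction on the number of $\mathbb P^1$ factors does not reach them. They are also where the hypothesis $c_{ji}\ne c_{j'i}$ is really used, for fibre curves such as $X_i\equiv c_{ji}$. It is not needed for disjointness of the blowups, which requires only distinct centers.
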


The order of choices is
\begin{equation}\label{epa:quantifiers}
 (m,K,w_0,v_0,\theta)\ ;\ T\ ;\ w\ ;\ D\ ;\ c\ ;\ H_0\ ;\ H.
\end{equation}
Thus each threshold sees only earlier weights, and the divisibility modulus
is independent of the centers.  The size threshold \(H_0\) may depend on the
centers.  Coordinatewise distinctness does not impose distinctness between
different coordinates.  The exact proposition is
\leanref{LemniscateInterpolation}{QuadraticPi.LemniscateInterpolation.Statement};
its unconditional proof is
\leanref{LemniscateGlobalInterpolationTheorem}{QuadraticPi.LemniscateGlobalInterpolation.ellipticSeparatedWeightPA}.

\subsection{The curve inequality underlying EPA}
\label{epa:curve-subsection}

We give the precise geometric intermediate statement.  Write
\[
 W=(w_0,w_1,\ldots,w_m),\quad
 V=(v_0,w_1/\theta,\ldots,w_m/\theta),\quad
 \rho=(2w_0,3w_0,w_1,\ldots,w_m).
\]
Consider a one-variable function field \(F/\mathbb C\) generated by a tuple
\(z=(x,y,X_1,\ldots,X_m)\) satisfying the cubic equation.  At a normalized
place \(P\), let \(\operatorname{ord}_P\) be its valuation.  The weighted
pole degree is
\begin{equation}\label{epa:weighted-degree}
 d_\rho(z)=\sum_P
 \max\left(0,\frac{-\operatorname{ord}_P(x)}{2w_0},
                 \frac{-\operatorname{ord}_P(y)}{3w_0},
                 \max_i\frac{-\operatorname{ord}_P(X_i)}{w_i}\right).
\end{equation}
Zero coordinates contribute zero to the pole maximum.  All but finitely
many terms vanish.  At a place centered at \((0,0,c_j)\), expand the tuple in
the completed local ring and put
\begin{equation}\label{epa:contact}
 \tau_P=\min\left\{
 \frac{\operatorname{ord}_P(y)}{v_0},\quad
 \frac{\operatorname{ord}_P(X_i-c_{ji}-\ell(y))}{w_i/\theta}:
 1\leq i\leq m\right\},
\end{equation}
where zero series are omitted from the minimum.  At least one of the series
is nonzero on a nonconstant curve.  This is the actual completed-local-ring
contact used by
\leanref{LemniscatePlaceContact}{QuadraticPi.LemniscatePlaceContact.contact}.
In particular \(\ell(y)\) in \eqref{epa:contact} is a formal substitution in
the completed local ring, not an assumed rational function on the curve.

\begin{proposition}[Uniform weighted curve inequality]
\label{epa:curve-inequality}
The successive thresholds in Theorem~\ref{epa:theorem} can be chosen so that
there is a rational \(\sigma>0\), chosen before the centers, for which every
such function-field curve and every allowed center configuration satisfy
\begin{equation}\label{epa:curve-margin}
 (1+\sigma)\sum_{P\text{ centered at some }(0,0,c_j)}\tau_P
 \leq d_\rho(z).
\end{equation}
\end{proposition}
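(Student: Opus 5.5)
Our plan is to prove \eqref{epa:curve-margin} by induction on $m$, with the thresholds $T_i$ produced inductively. The structure mirrors a Roth/Dyson-type product argument: each new coordinate $X_i$ may carry much larger weight than its predecessors. We work place by place. At a place $P$ centered at $(0,0,c_j)$ we bound $\tau_P$ by the local pole/zero data of functions on the curve. For each $i$, consider the function $X_i-X_{i'}$ or $X_i-c_{ji}$. Its order at $P$ is at least $\min(\operatorname{ord}_P(y)\cdot\ell\text{-contact},\ldots)$, and its total zero count is bounded by its pole degree. This pole degree is in turn bounded by $w_i\cdot d_\rho(z)$ plus the contributions of $x,y$. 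The coordinatewise distinctness $c_{ji}\neq c_{j'i}$ ensures that each place contributes to the zero divisor of at most one translate $X_i-c_{ji}$.

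\textbf{Step 1 (base, single coordinate).} We first handle the case where $\tau_P$ is realized by $\operatorname{ord}_P(y)/v_0$. Here $\sum_P\operatorname{ord}_P(y)$ over the $K$ places lying over $x=y=0$ is at most $\deg(y)\le 3w_0\,d_\rho(z)$. Using $K w_0\theta^m/v_0<1$, the contribution is at most a fraction $<1$ of $d_\rho$. The condition $K\theta^m<1$ plays the same role for the $X_i$ coordinates. Dividing by $w_i/\theta$ gains a factor $\theta$ per coordinate. Summing over the $K$ centers then yields the factor $K\theta^m$ once all $m$ coordinates are intersected. Concretely, we define $\tau_P$ as a minimum. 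We bound $\tau_P$ above by a geometric mean (or a convex combination with weights $1/m$) of the individual normalized orders. We then apply the degree bound to each coordinate separately.

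\textbf{Step 2 (removing the formal term $\ell(y)$).} The contact uses $X_i-c_{ji}-\ell(y)$, which is not a function on the curve. We replace it by differential information. The series $\ell(y)$ is the elliptic logarithm: $d\ell(y)=dx/(2y)\cdot(\text{unit})$ via \eqref{epa:formal-differential}. High contact of $X_i-\ell(y)$ therefore forces high vanishing of the rational differential $dX_i-\omega$ at $P$, where $\omega=dx/(2y)$ is the invariant differential of $E$ pulled back. This differential is nonzero unless $X_i-\ell(y)$ is constant, which is excluded by the rigidity declarations (no exact cubic differential in a finite extension, \leanref{x}{QuadraticPi.LemniscateDifferentialRigidity.finite_extension_no_exact_cubicDifferential}). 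The number of zeros of $dX_i-\omega$ is bounded by $2g-2$ plus its poles, and both are controlled by $d_\rho$ with weight $w_i$. The genus term is absorbed by Riemann–Hurwitz against the pole degrees. It is here that the thresholds $T_i$ must be large: the error terms from earlier coordinates scale with $w_1,\ldots,w_{i-1}$, and they must be negligible relative to $w_i$.

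\textbf{Step 3 (assembly and choice of $\sigma$).} We combine the per-coordinate bounds by the weighted-product (Dyson) inequality, as in \leanref{x}{QuadraticPi.LemniscateCurveInequality.weighted_curve_inequality}. This gives $\sum_P\tau_P\le (K\max(\theta^m,w_0\theta^m/v_0)+\varepsilon(w))\,d_\rho(z)$. Here $\varepsilon(w)\to0$ as the successive ratios $w_i/w_{i-1}$ grow. We choose $T_i$ so that $\varepsilon<\tfrac12(1-K\max(\cdots))$. We then take $\sigma$ rational with $1+\sigma$ below the reciprocal of the resulting constant. Neither choice depends on the centers. The main obstacle is Step 2, since the genus of the curve is unbounded. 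We would first try bounding $2g-2$ by $\sum_i$ (pole degree of $X_i$) plus the degree of $x$, via projection to the $x$-line. We then need to check that, after normalization by $w_i/\theta$, this remains a small multiple of $d_\rho$ uniformly in the curve.
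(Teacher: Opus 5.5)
There is a genuine gap, and it is structural. The hypotheses $K\theta^m<1$ and $Kw_0\theta^m/v_0<1$ are volume (dimension-count) conditions on the $(m+1)$-dimensional source and target spaces. A place-by-place, coordinate-by-coordinate degree count on a single curve has no mechanism for producing the product $\theta^m$. For one coordinate, $\operatorname{ord}_P(X_i-c_{ji})\ge\tau_P\min(w_i/\theta,v_0)$ and $\deg(X_i-c_{ji})\le w_i\,d_\rho(z)$ give only $\sum_P\tau_P\le K\theta\,d_\rho(z)$. Bounding the minimum by a convex combination or geometric mean of such per-coordinate bounds still leaves a constant of size $K\theta$. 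With the paper's parameters $K\approx C^m$ and $\theta<1$ is fixed, so $K\theta\gg1$. Your Step~1 has the same defect: $\sum_P\operatorname{ord}_P(y)/v_0\le 3w_0d_\rho(z)/v_0$ is not made small by $Kw_0\theta^m/v_0<1$. In the application $w_0/v_0=1/(4K\theta^m)\to\infty$.

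In the paper the volume conditions enter only through an auxiliary polynomial. Suppose a curve violates \eqref{epa:curve-margin}. The first volume inequality gives a nonzero normal $P$ of source weight $\le n$ whose packets vanish below $(1+3\sigma)n$ at all centres. The principal-divisor count, together with the growth margin \eqref{epa:growth-margin}, then forces $P$ and all its frame derivatives of target cost $\le\epsilon n$ to vanish on that curve. The substance of the proof is a zero estimate for such curves, in three steps:
\begin{enumerate}
\item A persistent-prime local-length comparison gives $\prod_{i\in A}W_i\le C_B(\sigma)\prod_{j\in A'}V_j$. The product separation \eqref{epa:product-separation}, which is where the successive thresholds enter, then rules out every prohibited change of normal basis.
\item Differential rigidity of $dx/(2y)$ forces the elliptic projection of the persistent component to be constant.
\item The additive-fibre inequality, using $K\theta^m<1$, supplies the final contradiction.
\end{enumerate}

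Two further problems. Step~2 does not close even on its own terms. You correctly observe that $\operatorname{ord}_P(X_i-c_{ji}-\ell(y))\le 1+\operatorname{ord}_P(dX_i-\omega)$. However, the zero count of $dX_i-\omega$ contains $2g-2$, and the genus is not linearly bounded by the pole degrees. Riemann--Hurwitz for the $x$-projection only moves the difficulty into the ramification divisor, whose degree can be quadratic in the coordinate degrees. On the additive fibre $x$ is constant, so there is no such projection at all. Meanwhile \eqref{epa:curve-margin} is homogeneous of degree one in the curve, so a quadratic error cannot be absorbed.

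Step~3 invokes \leanref{LemniscateCurveInequality}{QuadraticPi.LemniscateCurveInequality.weighted_curve_inequality}, which is the formal statement of this very proposition. The ``Dyson inequality'' you combine with is therefore the conclusion itself. The bound $\bigl(K\max(\theta^m,w_0\theta^m/v_0)+\varepsilon(w)\bigr)d_\rho(z)$ is asserted rather than derived. The announced induction on $m$ and the role of the thresholds $T_i$ are never turned into an argument.

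Finally, the rigidity statement you cite is needed in the paper only to make the persistent component's elliptic projection constant. It is not what controls zero counts of $dX_i-\omega$.
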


\begin{proof}
First choose \(\sigma>0\) sufficiently small that
\begin{equation}\label{epa:inflated-volumes}
 K(1+3\sigma)^{m+1}\frac{w_0}{v_0}\theta^m<1,
 \qquad K(1+3\sigma)^m\theta^m<1,
 \qquad (1+\sigma)\theta<1.
\end{equation}
Set
\begin{equation}\label{epa:epsilon}
 \epsilon=\min\left(\sigma,
       \frac{\sigma v_0}{2(1+\sigma)w_0}\right),
 \qquad g=1+\frac{w_0}{v_0}\epsilon.
\end{equation}
Then \(\epsilon>0\), \(g<3/2\), and
\begin{equation}\label{epa:growth-margin}
 g(1+\sigma)<1+3\sigma-\epsilon.
\end{equation}
These elementary estimates are checked in
\leanref{LemniscateGrowthBudget}{QuadraticPi.LemniscateGrowthBudget.strict_growth_contact_margin}
and
\leanref{LemniscateGrowthBudget}{QuadraticPi.LemniscateGrowthBudget.growth_factor_lt_three_halves}.

For clarity, the comparison constant used in the formal construction is
\begin{align}
 C_r(s)&=2(r+2)^{r+2}\left(1+\frac{r+2}{s}\right)^{r+2},
 \label{epa:comparison-basic}\\
 C_B(\sigma)&=C_m(\sigma)+3\,2^{m+3}C_{m+1}(\epsilon).
 \label{epa:comparison-constant}
\end{align}
Successive growth of the \(w_i\) gives the following product separation:
if \(A,A'\subset\{0,\ldots,m\}\) have equal cardinality and their largest
differing index \(i>0\) belongs to \(A\), then
\begin{equation}\label{epa:product-separation}
 C_B(\sigma)\prod_{j\in A'}V_j<\prod_{j\in A}W_j.
\end{equation}
All thresholds needed for \eqref{epa:inflated-volumes} and
\eqref{epa:product-separation} precede the centers.
This is
\leanref{LemniscateComparisonConstant}{QuadraticPi.LemniscateComparisonConstant.exists_thresholds},
using the explicit numerical record
\leanref{LemniscateWeightGeometry}{QuadraticPi.LemniscateWeightGeometry.NumericData}.

The elliptic differential frame is
\begin{equation}\label{epa:frame}
 \mathcal D_0=2y\frac{\partial}{\partial x}
 +(3x^2-1)\frac{\partial}{\partial y}
 +\sum_{i=1}^m\frac{\partial}{\partial X_i},
 \qquad \mathcal D_i=\frac{\partial}{\partial X_i}\ (i\geq1).
\end{equation}
It preserves the cubic ideal.  Under the formal chart it becomes
\(J(t)\partial_t,\partial_{u_1},\ldots,\partial_{u_m}\).
The source-weight cost of \(\mathcal D_0\) is at most \(w_0\); that of
\(\mathcal D_i\) is \(-w_i\).  Its loss of target vanishing is respectively
\(v_0\) and \(w_i/\theta\).  These signed source costs and target losses are
proved for every derivative word, rather than imposed as hypotheses; see
\leanref{LemniscateGeometryFrame}{QuadraticPi.LemniscateGeometryFrame.formalJet_polynomialFrame}
and
\leanref{LemniscateWeightedDerivation}{QuadraticPi.LemniscateWeightedDerivation.polynomialFrame_supportBound}.

Suppose \eqref{epa:curve-margin} fails.  The first strict volume inequality
in \eqref{epa:inflated-volumes} yields, for a sufficiently large integer
\(n\), a normal polynomial \(P\) of source weight at most \(n\), nonzero
modulo the cubic ideal, whose packets vanish below
\((1+3\sigma)n\) at all centers.  The dimension argument uses the injective
monomial map
\[
 t^sX^\alpha\longmapsto x^{\lfloor s/2\rfloor}y^{s\bmod2}X^\alpha,
\]
whose source-degree loss is at most \(2w_0\).  Strictness of the volume
inequality absorbs this fixed loss.
See
\leanref{LemniscateAuxiliaryPolynomial}{QuadraticPi.LemniscateAuxiliaryPolynomial.eventually_exists_nonzero_elliptic_auxiliary}.
For a derivative word of target cost at most \(\epsilon n\), its source
weight is at most \(gn\), and its local vanishing order is at least
\((1+3\sigma-\epsilon)n\tau_P\).  The degree-zero principal-divisor formula,
the assumed excess of contact, and \eqref{epa:growth-margin} force each of
these derivatives to vanish identically on the curve.  This supplies one
actual auxiliary polynomial with all the persistence hypotheses:
\leanref{LemniscateCurveAuxiliary}{QuadraticPi.LemniscateCurveAuxiliary.exists_auxiliary_with_persistence_inputs}.

Apply the persistent-prime argument to the ideals generated by the cubic
and the bounded derivative words of this same \(P\).  At a persistent
prime of intrinsic codimension \(1\leq r\leq m+1\), the upper weighted degree
estimate and the lower rectangular jet estimate apply to the same
localized ideal and the same length \(L\).  In particular their relevant
bounds have the form
\begin{equation}\label{epa:local-length-comparison}
 L\prod_{i\in A}W_i\leq3(gn)^r,
 \qquad
 \prod_{j=1}^r R_j\leq L,
\end{equation}
where, if \(b_1,\ldots,b_r\) enumerate the frame-normal basis \(A'\),
\[
 R_j=\left\lceil\frac{\epsilon n}{(m+3)rV_{b_j}}\right\rceil.
\]
These are the rectangular jet cutoffs in the independent frame-normal
directions for derivative budget \(\epsilon n/(m+3)\).
Taking \(n\) large ensures each cutoff is at least two.  The comparison
constant \eqref{epa:comparison-constant} absorbs the rounding margins, the
factor three from the fixed cubic, and all \(m+3\) persistence stages.
The resulting inequality is
\[
 \prod_{i\in A}W_i\leq C_B(\sigma)\prod_{j\in A'}V_j
\]
for the corresponding coordinate-normal and frame-normal bases.
This comparison is proved in
\leanref{LemniscateNormalComparison}{QuadraticPi.LemniscateNormalComparison.normal_product_comparison};
its two local-length inputs are in \texttt{LemniscatePrimeUpper.lean} and
\texttt{LemniscatePrimeLower.lean}.

Combining this comparison with \eqref{epa:product-separation} excludes
every prohibited change of normal basis.  The resulting differential
rigidity forces the elliptic projection of the persistent component to
be constant.  The obstruction used here is concrete: the invariant
differential
\[
 \frac{dx}{2y}=\frac{dy}{3x^2-1}
\]
has no primitive on any finite cover of the nonconstant elliptic
function field. This is proved over any characteristic-zero constant field
$K$. Put $h=x^3-x$, let $n=[E:K(x)]$, and suppose that a primitive $z$
exists in the finite extension $E$. For the derivation with $x'=1$, the
identity $2yz'=1$ and the twisted trace $T=\operatorname{Tr}_{E/K(x)}(yz)$
give
\[
 2hT'=h'T+nh.
\]
Thus $b=T/(nh)\in K(x)$ satisfies the rational differential equation
\[
 2(x^3-x)b'(x)+(3x^2-1)b(x)=1.
\]
Finite poles are excluded by their orders (the roots of \(x^3-x\) are
simple), and a nonzero polynomial solution is excluded by its leading
term.  The actual finite-cover and projection statements are
\leanref{LemniscateDifferentialRigidity}{QuadraticPi.LemniscateDifferentialRigidity.finite_extension_no_exact_cubicDifferential}
and
\leanref{LemniscateCurveRigidity}{QuadraticPi.LemniscateCurveRigidity.constant_projection}.
Because the original curve passes through an interpolation center, its
constant elliptic coordinates must be \((0,0)\).  The remaining curve is
therefore in the additive fiber; its contact and weighted degree agree
with those in the proved additive-fiber curve inequality.  The second
volume inequality in \eqref{epa:inflated-volumes} and the ordinary part
of \(C_B(\sigma)\) supply that inequality and contradict the supposed
excess.  The complete assembly is
\leanref{LemniscateCurveInequality}{QuadraticPi.LemniscateCurveInequality.weighted_curve_inequality}.
\end{proof}

\subsection{From the curve inequality to the packet map}
\label{epa:geometric-assembly}

\begin{proof}[Proof of Theorem~\ref{epa:theorem}]
Choose a common positive integer radius \(D\) for the rational source and
target weights.  Thus all numbers
\[
 \frac{D}{2w_0},\ \frac{D}{3w_0},\ \frac{D}{w_i},\quad
 a=\frac{D}{v_0},\quad b_i=\frac{D}{w_i/\theta}
\]
are positive integers.  Choose a single integer \(T>0\) with
\(Tv_0>w_i/\theta\) for every \(i\).  These choices are made before the
centers, as in
\leanref{LemniscateFixedJetScale}{QuadraticPi.LemniscateFixedJetScale.exists_scale}.
Let \(\ell_{<T}\) denote the truncation of \(\ell\) in degrees less than
\(T\), and define in the affine coordinate ring
\begin{equation}\label{epa:fixed-center-ideal}
 I_j=\left(y^a,x^a,
       (X_i-c_{ji}-\ell_{<T}(y))^{b_i}\ (1\leq i\leq m)\right).
\end{equation}
The generator \(x^a\) selects the branch \((x,y)=(0,0)\) from the three
points above \(y=0\).  Locally at that branch it is redundant, since
\(x(x^2-1)=y^2\) and \(x^2-1\) is a unit.  Globally it is essential to
the support statement.  These facts are proved in
\leanref{LemniscateJetSelector}{QuadraticPi.LemniscateJetSelector.localized_selector_eq}
and
\leanref{LemniscateJetSelector}{QuadraticPi.LemniscateJetSelector.selectorIdeal_le_prime_iff}.
The fixed ideal \(I=\prod_j I_j\) is extended to a coherent ideal on the
projective monomial compactification at radius \(D\); its support consists
exactly of the chosen centers.

Blow up this ideal.  Denote by \(A\) the pullback hyperplane bundle and by
\(J\) the invertible exceptional ideal.  For every noncontracted integral
curve meeting the affine chart, use its own normalized function field.
The two exact degree formulas are
\begin{equation}\label{epa:degree-formulas}
 \deg_C A=D\,d_\rho(z),\qquad
 \deg_C J=-D\sum_P\tau_P.
\end{equation}
The second formula follows from the local colength of
\eqref{epa:fixed-center-ideal}; the strict cutoff \(Tv_0>w_i/\theta\)
ensures that replacing \(\ell\) by \(\ell_{<T}\) does not change the
weighted minimum.  The formulas, including construction of the required
normalization data, are
\leanref{LemniscateCurveHyperplaneDegree}{QuadraticPi.LemniscateCurveModel.hyperplane_degree}
and
\leanref{LemniscateCurveExceptionalDegree}{QuadraticPi.LemniscateCurveModel.exceptional_degree}.

An auxiliary twist \(A^{\otimes a_0}\otimes J\) is ample for some
\(a_0>1\).  This auxiliary exponent may depend on the centers.  It does not
alter \(D\), the ideal, or the desired interpolation bundle \(A\otimes J\).
Equation~\eqref{epa:curve-margin}, together with the two degree formulas,
gives the uniform comparison
\begin{equation}\label{epa:uniform-ample-margin}
 \deg_C(A\otimes J)\geq
 \frac{\sigma}{a_0(1+\sigma)-1}
       \deg_C(A^{\otimes a_0}\otimes J).
\end{equation}
Contracted curves and curves outside the affine chart satisfy the same
comparison by the exceptional-bundle and support calculations.  The
coefficient is positive and independent of the curve.  The proved
numerical ampleness theorem therefore gives ampleness of \(A\otimes J\).
This is
\leanref{LemniscateBlowupAmple}{QuadraticPi.LemniscateBlowupGeometry.interpolationBundle_ample},
transported to the literal jet-surjectivity bundle by
\leanref{LemniscateGlobalInterpolationTheorem}{QuadraticPi.LemniscateGlobalInterpolation.actual_blowupBundle_ample}.

The Rees-algebra and cohomological jet-surjectivity theorem now gives,
for all sufficiently large \(n\), surjectivity of the restriction of
degree-\(n\) global sections to the quotient by \(I^n\).
On the affine chart these sections have normal polynomial
representatives of source weight at most \(nD\).  Moreover each
\(I_j^n\) maps into the weighted formal ideal of cutoff \(nD\).
The ideals at distinct centers are comaximal, so their finite quotient
maps give precisely the product of coefficient packets in
\eqref{epa:packet-map}.  The section-to-normal-polynomial and
quotient-to-packet transports are included in
\leanref{LemniscateGlobalInterpolation}{QuadraticPi.LemniscateGlobalInterpolation.eventual_packetMap_of_ample}.
Taking \(H=nD\) proves the theorem.  Its exact divisibility formulation is
\leanref{LemniscateGlobalInterpolation}{QuadraticPi.LemniscateGlobalInterpolation.eventual_divisible_packetMap_of_ample}.
\end{proof}

\section{The actual arithmetic minor and its lower bound}
\label{epa:arithmetic-section}

\subsection{Gaussian centers and finite coefficient entries}

In the determinant application, take \(K=N^2\) and enumerate the Gaussian
square
\[
 \lambda_j\in\{a+bi:0\leq a,b<N\}.
\]
For integers \(p_i\neq0\) and \(q_i\geq2\), set
\begin{equation}\label{epa:rational-data}
 w_i=\lceil\log q_i\rceil,\qquad
 c_{ji}=\lambda_j\frac{p_i}{q_i}.
\end{equation}
The weights are positive integers, hence rational, and the centers are
coordinatewise distinct.  Fix \(F\geq1/\theta\) and put
\begin{equation}\label{epa:truncations}
 U_i=\left\lceil\frac{Fw_i}{v_0}\right\rceil,\qquad
 T_i=U_i+1,\qquad
 S=\left\lceil\frac{H}{v_0}\right\rceil.
\end{equation}
Let \(x_{<S}(t)\) and \(\ell_{<T_i}(t)\) be the corresponding finite
truncations of the series in \eqref{epa:formal-cubic}--\eqref{epa:formal-log}.
Rows are indexed by
\[
 \mathcal R_H=\{(j,k,\beta):1\leq j\leq N^2,\ (k,\beta)\in\mathcal B_H(w)\},
 \qquad M_H=|\mathcal R_H|,
\]
and columns by the normal monomials in \eqref{epa:source-space}.  Define
the rectangular matrix \(A_H\) by
\begin{equation}\label{epa:entry}
 (A_H)_{(j,k,\beta),(a,b,\alpha)}
 =\left(\prod_{i=1}^m\binom{\alpha_i}{\beta_i}\right)
 [t^k]\left(
 x_{<S}(t)^a t^b
 \prod_{i=1}^m
       (c_{ji}+\ell_{<T_i}(t))^{\alpha_i-\beta_i}\right).
\end{equation}
If any \(\beta_i>\alpha_i\), the entry is zero.  Formula
\eqref{epa:entry} is the proved coefficient extraction of the actual
evaluated monomial, not a separately specified arithmetic model; see
\leanref{LemniscateArithmetic}{QuadraticPi.LemniscateArithmetic.coefficient_monomialImage}.

The base-coordinate truncation does not affect any row, because
\(Sv_0\geq H\).  For the additive logarithm truncations, the tails begin
in degrees \(T_i\) and satisfy \(T_iv_0\geq w_i/\theta\).
Consequently the substitution
\[
 u_i\longmapsto u_i+\ell(t)-\ell_{<T_i}(t)
\]
and its inverse preserve the strict weighted packet quotient.
This is an invertible filtered change of target coordinates.  It proves
equivalence of the full and truncated packet surjectivity assertions;
it is not a claim of entrywise equality of the two matrices.
The formal bridge is
\leanref{LemniscateInterpolationMatrix}{QuadraticPi.LemniscateInterpolationMatrix.formal_packets_surjective_iff_truncated},
with the literal matrix application
\leanref{LemniscateInterpolationMatrix}{QuadraticPi.LemniscateInterpolationMatrix.matrix_surjective_of_epa_packet}.

\begin{proposition}[Cofinal nonzero selected minors]
\label{epa:cofinal-minor}
For fixed base data with \(K=N^2\), there are successive thresholds such
that, whenever \eqref{epa:rational-data} satisfies those thresholds,
for every \(F\geq1/\theta\) and every integer \(L\) there is an integer
\(H\geq L\), \(H>0\), and an injective column selection
\(\iota:\mathcal R_H\longrightarrow\mathcal P_H\)'s monomial basis such that
\begin{equation}\label{epa:selected-determinant}
 \Delta_H=\det\bigl((A_H)_{r,\iota(s)}\bigr)_{r,s\in\mathcal R_H}\neq0.
\end{equation}
The truncation \(S\) may be chosen as in \eqref{epa:truncations}.
\end{proposition}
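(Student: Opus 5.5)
The plan is to deduce the proposition directly from Theorem~\ref{epa:theorem}, applied with $K=N^2$, and then convert surjectivity into a nonzero maximal minor by linear algebra.

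\textbf{Thresholds and modulus.} Take the thresholds $T_i$ supplied by Theorem~\ref{epa:theorem} for the fixed base data. These become the successive thresholds of the proposition. Whenever the integer weights $w_i=\lceil\log q_i\rceil$ satisfy $w_i>T_i(w_1,\ldots,w_{i-1})$, the theorem gives a modulus $D>0$ that is independent of the centers.

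\textbf{Centers and level.} The centers $c_{ji}=\lambda_j p_i/q_i$ are coordinatewise distinct. This holds because $p_i\neq0$ and the Gaussian points $\lambda_j$ are distinct, so $\lambda_j p_i/q_i\neq\lambda_{j'}p_i/q_i$. The theorem then yields $H_0$ such that $\mathcal E_{H,w,c}$ is surjective whenever $H\geq H_0$ and $D\mid H$. Given $F\geq1/\theta$ and $L$, choose $H$ to be a positive multiple of $D$ with $H\geq\max(L,H_0,1)$.

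\textbf{Matching the matrix to the packet map.} With $S$ and $T_i$ defined by \eqref{epa:truncations}, I next check that $A_H$ represents the truncated packet map in the monomial basis of $\mathcal P_H(w)$ and the coefficient basis of $\prod_j\mathbb C^{\mathcal B_H(w)}$. Two facts are used here.

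\begin{itemize}
\item Entry formula: expanding $(c_{ji}+u_i+\ell_{<T_i}(t))^{\alpha_i}$ binomially in $u_i$ gives the coefficient of $u^\beta$ as $\prod_i\binom{\alpha_i}{\beta_i}(c_{ji}+\ell_{<T_i})^{\alpha_i-\beta_i}$. This is exactly the extraction recorded in \leanref{LemniscateArithmetic}{QuadraticPi.LemniscateArithmetic.coefficient_monomialImage}.
\item Truncation of $x$: replacing $x_*$ by $x_{<S}$ changes no row, since $k<H/v_0\leq S$.
\end{itemize}

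The logarithm truncations are then handled by the filtered change of target coordinates $u_i\mapsto u_i+\ell-\ell_{<T_i}$. It preserves the strict packet quotient because $T_iv_0\geq Fw_i+v_0>w_i/\theta$. Hence surjectivity of $\mathcal E_{H,w,c}$ is equivalent to surjectivity of the linear map defined by $A_H$. This is \leanref{LemniscateInterpolationMatrix}{QuadraticPi.LemniscateInterpolationMatrix.formal_packets_surjective_iff_truncated} together with \leanref{LemniscateInterpolationMatrix}{QuadraticPi.LemniscateInterpolationMatrix.matrix_surjective_of_epa_packet}.

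\textbf{Selecting the minor.} $A_H$ is an $M_H\times\dim\mathcal P_H$ matrix, with finitely many rows and columns, and it defines a surjective map. It therefore has row rank $M_H$, so some $M_H$ columns are linearly independent. Fixing a bijection of these columns with $\mathcal R_H$ gives the injective selection $\iota$ with $\Delta_H\neq0$.

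\textbf{Main obstacle.} The step I expect to be delicate is the bookkeeping in the truncation equivalence: the coordinate change is not entrywise and mixes the $u$-degrees. The key point to verify is that the map $u_i\mapsto u_i+\delta_i(t)$, with $\operatorname{ord}\delta_i\geq T_i$, sends the ideal of coefficients of weight $\geq H$ into itself. Each $u_i$ carries weight $w_i/\theta$ and each power of $t$ weight $v_0$, so the substitution is weight-nondecreasing, and it is invertible by the same argument with $-\delta_i$.

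Everything else is formal. In particular, cofinality in $H$ comes for free, because the multiples of $D$ are unbounded and the theorem holds for all $H\geq H_0$.
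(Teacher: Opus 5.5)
Your proposal is correct and follows essentially the same route as the paper. You apply EPA at the rational Gaussian centers and get cofinality from the unbounded multiples of $D$ beyond $H_0$; you transport packet surjectivity to $A_H$ through the exact base truncation and the invertible filtered substitution $u_i\mapsto u_i+\ell(t)-\ell_{<T_i}(t)$ (valid because $T_iv_0>w_i/\theta$), check that this substitution is weight-nondecreasing, and then select $M_H$ linearly independent columns — which is precisely the paper's argument.
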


\begin{proof}
Apply EPA at the rational Gaussian centers.  Its positive divisibility
progression contains arbitrarily large \(H\).  The filtered coordinate
change above transports packet surjectivity to surjectivity of the
linear map defined by \(A_H\).  Finite-dimensional linear algebra then
selects \(M_H\) independent columns.  The formal endpoint is
\leanref{LemniscateUnconditionalMinors}{QuadraticPi.LemniscateUnconditionalMinors.cofinal_minors}.
\end{proof}

\subsection{Denominator clearing without an elliptic source loss}

Let \(\Lambda(n)=\operatorname{lcm}(1,\ldots,n)\), with
\(\Lambda(0)=1\), and define
\begin{equation}\label{epa:denominator}
 D_H=\prod_{i=1}^m\Lambda(U_i)^{\lfloor H/w_i\rfloor},\qquad
 r_\beta=\prod_iq_i^{-\beta_i},\qquad
 c_\alpha=\prod_iq_i^{\alpha_i}.
\end{equation}
The coefficients of \(x_{<S}\) are integral.
For \(1\leq n<T_i\), equation~\eqref{epa:formal-log} gives
\[
 n[t^n]\ell(t)=[t^{n-1}]V(t)\in\mathbb Z.
\]
Hence \(\Lambda(U_i)\ell_{<T_i}\in\mathbb Z[t]\), and
\(q_i\Lambda(U_i)(c_{ji}+\ell_{<T_i})\in\mathbb Z[i][t]\).
Since \(\alpha_i\leq\lfloor H/w_i\rfloor\), every selected matrix entry
satisfies
\begin{equation}\label{epa:entry-integrality}
 D_H r_\beta c_\alpha
      (A_H)_{(j,k,\beta),(a,b,\alpha)}\in\mathbb Z[i].
\end{equation}
Only the remaining additive degree \(\alpha_i-\beta_i\) needs clearing.
There is no extra denominator contribution from the elliptic basis
\(x^ay^b\).  These assertions are
\leanref{LemniscateLocal}{QuadraticPi.LemniscateLocal.ellipticLog_trunc_lcm_cleared}
and
\leanref{LemniscateDeterminant}{QuadraticPi.LemniscateDeterminant.selectedMinor_entries_gaussian}.

Because a nonzero Gaussian integer has absolute value at least one,
\eqref{epa:entry-integrality} and \(\Delta_H\neq0\) give
\begin{equation}\label{epa:exact-clearing-bound}
 \log|\Delta_H|\geq
 -M_H\log D_H
 -\sum_{s\in\mathcal R_H}\sum_i\alpha_i(\iota(s))\log q_i
 +\sum_{(j,k,\beta)\in\mathcal R_H}\sum_i\beta_i\log q_i.
\end{equation}
This exact row/column cost identity is formalized by
\leanref{LemniscateDeterminant}{QuadraticPi.LemniscateDeterminant.selectedMinor_clearing_bound}.

Define the normalized mean additive row weight
\begin{equation}\label{epa:mean-row-weight}
 \overline W_H=
 \frac{1}{M_H H}
 \sum_{(j,k,\beta)\in\mathcal R_H}\sum_{i=1}^m w_i\beta_i,
 \qquad c_{\mathrm{lcm}}=\log4+4.
\end{equation}
For any \(0<w_{\min}\leq\min_iw_i\), the estimates used in
\eqref{epa:exact-clearing-bound} are
\begin{align}
 \log D_H
 &\leq H\left(c_{\mathrm{lcm}}\frac{Fm}{v_0}
                +c_{\mathrm{lcm}}\sum_i\frac1{w_i}\right),
 \label{epa:denominator-log}\\
 \sum_{s\in\mathcal R_H}\sum_i\alpha_i(\iota(s))\log q_i
 &\leq M_H H,
 \label{epa:column-cost}\\
 \sum_{(j,k,\beta)\in\mathcal R_H}\sum_i\beta_i\log q_i
 &\geq M_H H\left(\overline W_H-\frac{\theta}{w_{\min}}\right).
 \label{epa:row-cost}
\end{align}
For \eqref{epa:denominator-log}, use
\(\log\Lambda(n)\leq c_{\mathrm{lcm}}n\),
\(U_i\leq Fw_i/v_0+1\), and
\(\lfloor H/w_i\rfloor\leq H/w_i\).
For \eqref{epa:column-cost}, use \(\log q_i\leq w_i\) and the source
budget.  Finally \(w_i-\log q_i\leq1\), while each row has
\(\sum_iw_i\beta_i<\theta H\); this gives
\(\sum_i\beta_i\leq\theta H/w_{\min}\) and proves
\eqref{epa:row-cost}.

\begin{theorem}[Normalized arithmetic lower bound]
\label{epa:arithmetic-lower-bound}
Every nonzero selected minor above satisfies
\begin{equation}\label{epa:normalized-lower}
 \boxed{\quad
 \frac{\log|\Delta_H|}{M_H H}
 \geq -(1-\overline W_H)
 -\left(c_{\mathrm{lcm}}\frac{Fm}{v_0}
       +c_{\mathrm{lcm}}\sum_{i=1}^m\frac1{w_i}
       +\frac{\theta}{w_{\min}}\right).
 \quad}
\end{equation}
For data above the EPA thresholds, minors satisfying
\eqref{epa:normalized-lower} exist for arbitrarily large positive integers
\(H\).
\end{theorem}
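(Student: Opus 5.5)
The plan is to assemble the bound directly from the exact clearing inequality \eqref{epa:exact-clearing-bound} and the three auxiliary estimates \eqref{epa:denominator-log}--\eqref{epa:row-cost}. We divide \eqref{epa:exact-clearing-bound} by \(M_HH>0\). Here \(M_H\geq1\), since the row set contains \((j,0,0)\) whenever \(H>0\), and \(H>0\) by Proposition~\ref{epa:cofinal-minor}. This gives
\[
 \frac{\log|\Delta_H|}{M_HH}\geq-\frac{\log D_H}{H}
 -\frac{1}{M_HH}\sum_{s}\sum_i\alpha_i(\iota(s))\log q_i
 +\frac{1}{M_HH}\sum_{(j,k,\beta)}\sum_i\beta_i\log q_i .
\]
We then substitute the three estimates term by term. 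The denominator term is at most \(c_{\mathrm{lcm}}Fm/v_0+c_{\mathrm{lcm}}\sum_i1/w_i\). The column term is at most \(1\). The row term is at least \(\overline W_H-\theta/w_{\min}\). Collecting terms, \(-1+\overline W_H=-(1-\overline W_H)\), and the remaining constants form exactly the bracket in \eqref{epa:normalized-lower}.

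The substance lies in justifying the three estimates in the forms stated.

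\textbf{Denominator estimate.} We use \(\log\Lambda(n)\leq c_{\mathrm{lcm}}n\), with \(\Lambda(0)=1\) covering \(n=0\). This follows from the standard bound \(\Lambda(n)\leq4^n\), and the extra \(+4\) in \(c_{\mathrm{lcm}}\) gives slack. Combined with \(U_i\leq Fw_i/v_0+1\) and \(\lfloor H/w_i\rfloor\leq H/w_i\), it gives
\[
 \log D_H\leq\sum_i\frac{H}{w_i}\,c_{\mathrm{lcm}}\Bigl(\frac{Fw_i}{v_0}+1\Bigr),
\]
which is the displayed bound.

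\textbf{Column estimate.} Each selected column \(\iota(s)\) lies in \(\mathcal P_H(w)\). The source budget therefore gives \(\sum_iw_i\alpha_i\leq H-w_0(2a+3b)\leq H\). Since \(\log q_i\leq\lceil\log q_i\rceil=w_i\), each column contributes at most \(H\). There are \(M_H\) columns because \(\iota\) is injective on \(\mathcal R_H\), so the total is at most \(M_HH\).

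\textbf{Row estimate.} For each row we write
\[
 \sum_i\beta_i\log q_i=\sum_iw_i\beta_i-\sum_i(w_i-\log q_i)\beta_i .
\]
The second sum is at most \(\sum_i\beta_i\), because \(0\leq w_i-\log q_i<1\). The strict target cutoff in \eqref{epa:target-packet} gives \(\sum_i(w_i/\theta)\beta_i<H\), hence
\[
 \sum_i\beta_i\leq\frac{\theta H}{w_{\min}} .
\]
Summing over the rows and using the definition \eqref{epa:mean-row-weight} of \(\overline W_H\) yields \eqref{epa:row-cost}.

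The second assertion of the theorem is then immediate from Proposition~\ref{epa:cofinal-minor}. For data above the EPA thresholds, every \(F\geq1/\theta\) and every \(L\), it supplies \(H\geq L\), \(H>0\), and a nonzero selected minor. The inequality just proved applies to that minor.

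The step I expect to need the most care is the integrality input behind \eqref{epa:exact-clearing-bound}. One has to confirm that scaling each row by \(D_Hr_\beta\) and each column by \(c_\alpha\) produces a Gaussian integer matrix with nonzero determinant, so that \(|\det|\geq1\). In particular the factor \(q_i^{-\beta_i}\) from the row must compensate the binomial shift \(\alpha_i-\beta_i\). This is exactly \eqref{epa:entry-integrality}, which is taken as established. Beyond that, only sign bookkeeping remains: the row contribution enters with a plus sign and the column contribution with a minus sign.
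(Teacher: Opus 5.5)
Your proposal is correct and follows the paper's proof: you divide the exact clearing bound \eqref{epa:exact-clearing-bound} by $M_HH>0$ and insert \eqref{epa:denominator-log}--\eqref{epa:row-cost}, justified as the paper does via $\log\Lambda(n)\le c_{\mathrm{lcm}}n$, $\log q_i\le w_i$, and $0\le w_i-\log q_i<1$ with $\sum_i\beta_i\le\theta H/w_{\min}$, and you get the cofinal statement from Proposition~\ref{epa:cofinal-minor}. A minor point: the column sum has $M_H$ terms simply because it is indexed by $\mathcal R_H$, so injectivity of $\iota$ is not needed there.
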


\begin{proof}
Substitute \eqref{epa:denominator-log}--\eqref{epa:row-cost} into
\eqref{epa:exact-clearing-bound} and divide by \(M_HH>0\).
The lower bound for an arbitrary nonzero literal selected minor is
\leanref{LemniscateDeterminant}{QuadraticPi.LemniscateDeterminant.selectedMinor_arithmetic_lower_bound}.
Applying Proposition~\ref{epa:cofinal-minor} gives the unconditional cofinal
statement
\leanref{LemniscateUnconditionalMinors}{QuadraticPi.LemniscateUnconditionalMinors.cofinal_arithmetic_lower_bounds}.
This is the exact minor and exact lower estimate used in the analytic
comparison later in the proof.
\end{proof}

\section{An analytic bound for the same selected minor}
\label{ana:sec:analytic}

The arithmetic lower bound and the analytic upper bound must concern the
\emph{same} determinant.  We give the identification explicitly, including
the common pole multiplier, the local change of variable, and the finite
translation of the additive variables.  In particular, none of the analytic
steps makes a new choice of source columns.

\subsection{Rows, columns, and the actual arithmetic determinant}
\label{ana:subsec:notation}

Fix an integer $m\geq1$, an integer $N\geq1$, and put $K=N^2$.
Let $w_0,v_0,\theta$ be positive rational numbers with $\theta<1$.
For integers $p_i\ne0$ and $q_i\geq2$, $1\leq i\leq m$, set
\[
 w_i=\lceil\log q_i\rceil,\qquad
 W(\alpha)=\sum_{i=1}^m w_i\alpha_i.
\]
The row packet and the full row set at a positive cutoff $H$ are
\begin{align*}
 \mathcal B_H&=\{(k,\beta)\in\mathbb N\times\mathbb N^m:
                    v_0k+\theta^{-1}W(\beta)<H\},\\
 \mathcal R_H&=\{0,\ldots,N-1\}^2\times\mathcal B_H,
 &M_H&=|\mathcal R_H|=K|\mathcal B_H|.
\end{align*}
The source columns are the canonical elliptic monomials
\[
 \mathcal C_H=\{(a,b,\alpha):a\in\mathbb N,
       \ b\in\{0,1\},\ \alpha\in\mathbb N^m,
       \ w_0(2a+3b)+W(\alpha)\leq H\}.
\]
For a row $r=(j,k,\beta)$ write $k_r=k$, $\beta_r=\beta$, and
$\lambda_j=j_1+\mathrm i j_2$.  Define the normalized mean transverse
row weight by
\begin{equation}
 \overline W_H=\frac{1}{M_HH}\sum_{r\in\mathcal R_H}W(\beta_r).
 \label{ana:eq:mean}
\end{equation}
The zero jet belongs to every packet, and consequently $M_H>0$.
The defining row inequality gives
\begin{equation}
 0\leq\overline W_H\leq\theta,
 \qquad \sum_{r\in\mathcal R_H}k_r\leq M_HH/v_0.
 \label{ana:eq:rowbounds}
\end{equation}
These are the concrete rows used in
\leanref{LemniscateAnalyticRowData}{QuadraticPi.LemniscateAnalyticRowData.row_order_sum_le}
and
\leanref{LemniscateExponentConditional}{QuadraticPi.LemniscateExponentConditional.mean_bounds}.

For an integer truncation level $L$ satisfying $H\leq Lv_0$, let
$\iota:\mathcal R_H\longrightarrow\mathcal C_H$ be a selection of
columns.  We denote by
\[
 \Delta_{H,L,\iota}
\]
the determinant of the actual selected arithmetic matrix, with the
logarithmic truncation factor $F=\theta^{-1}$.  This is precisely
\leanref{LemniscateExponentConditional}{QuadraticPi.LemniscateExponentConditional.minor}
(followed by \texttt{Matrix.det}), hence also
\nolinkurl{LemniscateDeterminant.selectedMinor}.  The base-coordinate truncation agrees coefficientwise on the retained
rows. The additive logarithm truncations are related to the full formal
jets by a unipotent filtered row change whose determinant is one. Thus
the same selected determinant equals the full-formal determinant, without
an entrywise equality assertion or an analytic approximation error. See
\leanref{LemniscateLogJetDeterminant}{QuadraticPi.LemniscateLogJetDeterminant.det_fullLog_selected_eq_selectedMinor}.

\subsection{The explicit theta multiplier}
\label{ana:subsec:theta}

The pole multiplier used below is a concrete theta series.  Put
\begin{equation}
 \Theta(\zeta)=\sum_{n\in\mathbb Z}
       \exp(-\pi n^2+2\pi\mathrm i n\zeta),\qquad
 \zeta(z)=\frac z\Omega-\frac{1+\mathrm i}{2},\qquad
 S(z)=\Theta(\zeta(z)).
 \label{ana:eq:thetaexplicit}
\end{equation}
The Gaussian decay gives locally uniform convergence of the series and
its derivatives, so $\Theta$ and $S$ are entire.  In Mathlib notation
$\Theta(\zeta)=\texttt{jacobiTheta\textsubscript{2}}(\zeta,\mathrm i)$;
this two-variable notation represents the displayed theta-three series,
not the theta function with characteristic $(1/2,0)$.
The project definitions and holomorphy theorem are
\leanref{LemniscateAnalyticTheta}{QuadraticPi.LemniscateAnalytic.squareTheta},
\leanref{LemniscateAnalyticTheta}{QuadraticPi.LemniscateAnalytic.squareTheta_differentiable},
and
\leanref{LemniscateAnalyticRealization}{QuadraticPi.LemniscateAnalytic.omegaMultiplier}.

Index shifts in the absolutely convergent series give
\begin{equation}
 \Theta(\zeta+1)=\Theta(\zeta),\qquad
 \Theta(\zeta+\mathrm i)=e^{\pi-2\pi\mathrm i\zeta}\Theta(\zeta),
 \qquad \Theta(-\zeta)=\Theta(\zeta).
 \label{ana:eq:thetaquasi}
\end{equation}
Thus $S(z+\Omega)=S(z)$ and
$S(z+\mathrm i\Omega)=e^{\pi-2\pi\mathrm i\zeta(z)}S(z)$.
Applying the second identity at $(1-\mathrm i)/2$, and using the first
and third identities, gives
$\Theta((1+\mathrm i)/2)=-\Theta((1+\mathrm i)/2)=0$.
Consequently $S$ vanishes at every point of
$\Lambda_\Omega=\Omega(\mathbb Z+\mathrm i\mathbb Z)$.
This vanishing is all that pole clearing needs; no assertion about the
complete zero divisor or the multiplicities of the zeros is used here.
The identities are
\leanref{LemniscateAnalyticTheta}{QuadraticPi.LemniscateAnalytic.squareTheta_add_one},
\leanref{LemniscateAnalyticTheta}{QuadraticPi.LemniscateAnalytic.squareTheta_add_I},
and
\leanref{LemniscateAnalyticTheta}{QuadraticPi.LemniscateAnalytic.squareTheta_half_zero};
the lattice conclusion is
\leanref{LemniscateAnalyticRealization}{QuadraticPi.LemniscateAnalytic.omegaMultiplier_lattice_zero}.

The Gaussian-normalized norm
\begin{equation}
 \mathcal N(\zeta)=e^{-\pi(\operatorname{Im}\zeta)^2}|\Theta(\zeta)|
 \label{ana:eq:gaussiannorm}
\end{equation}
is periodic under both $1$ and $\mathrm i$, since the norm of the second
multiplier in \eqref{ana:eq:thetaquasi} is
$e^{\pi+2\pi\operatorname{Im}\zeta}$.
Moreover
\[
 \Theta(0)=\sum_{n\in\mathbb Z}e^{-\pi n^2}>0.
\]
At $z_*=(1+\mathrm i)\Omega/2$ and every Gaussian translate, this gives
\begin{equation}
 |S(z_*+\Omega(a+b\mathrm i))|
     =e^{\pi b^2}\Theta(0)\geq\Theta(0)>0
       \qquad(a,b\in\mathbb Z).
 \label{ana:eq:thetagridlower}
\end{equation}
In particular there is a fixed $U\geq1$ for which all these values are
nonzero and their inverse $D$th powers have norm at most $U^D$.
The formal development proves the stronger assertion on a single fixed
closed disk about \emph{every} such translate.  To obtain it, continuity
and $\Theta(0)>0$ give $r,c>0$ with $|\Theta(\zeta)|\geq c$ when
$|\zeta|\leq r$.  Periodicity of \eqref{ana:eq:gaussiannorm} then yields
\[
 |\Theta(\zeta+a+b\mathrm i)|\geq
 e^{-\pi(\operatorname{Im}\zeta)^2}|\Theta(\zeta)|
       \geq e^{-\pi r^2}c.
\]
Rescale the disk by $\Omega$ and take a suitable
$U\geq\max(1,e^{\pi r^2}/c)$.
See
\leanref{LemniscateAnalyticTheta}{QuadraticPi.LemniscateAnalytic.gaussianThetaNorm_add_gaussian}
and
\leanref{LemniscateAnalyticRealization}{QuadraticPi.LemniscateAnalytic.omegaMultiplier_uniform_nonzero_inverse_pow}.

\begin{lemma}[Entire cleared coordinates and their growth]
\label{ana:lem:thetaclearing}
The functions $S^2\wp$ and $S^3\wp'/2$, initially defined away from
$\Lambda_\Omega$, extend to entire functions $X_c,Y_c$.  There are
fixed positive constants $C_S,C_X,C_Y$ such that, for every $z$,
\begin{equation}
 |S(z)|\leq C_S e^{\pi(\operatorname{Im}\zeta(z))^2},\quad
 |X_c(z)|\leq C_X e^{2\pi(\operatorname{Im}\zeta(z))^2},\quad
 |Y_c(z)|\leq C_Y e^{3\pi(\operatorname{Im}\zeta(z))^2}.
 \label{ana:eq:clearedgrowth}
\end{equation}
Consequently a single $C_g\geq1$ satisfies, whenever $2a+3b\leq D$,
\begin{equation}
 \left|S(z)^{D-2a-3b}X_c(z)^aY_c(z)^b\right|
 \leq C_g^D e^{\pi D(\operatorname{Im}\zeta(z))^2}
 \leq C_g^D e^{\pi D(|z|/\Omega+1)^2}.
 \label{ana:eq:uniformmonomialgrowth}
\end{equation}
\end{lemma}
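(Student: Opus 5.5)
We prove the entire extension first, then the growth bounds, always working with Gaussian-normalized quantities that are doubly periodic. For the extension, the lattice points are isolated. By \eqref{ana:eq:thetaquasi}, $S(\lambda+z)$ differs from $S(z)$ by a nowhere-vanishing entire factor when $\lambda\in\Lambda_\Omega$, and $S$ vanishes at every lattice point. Near each $\lambda\in\Lambda_\Omega$ we therefore have $S(z)=(z-\lambda)g_\lambda(z)$ with $g_\lambda$ analytic. Meanwhile $\wp(z)-(z-\lambda)^{-2}$ is analytic at $\lambda$, and $\wp'(z)+2(z-\lambda)^{-3}$ is analytic there. Hence $S^2\wp=g_\lambda^2\bigl(1+(z-\lambda)^2h\bigr)$ and $S^3\wp'/2=g_\lambda^3\bigl(-1+(z-\lambda)^3k\bigr)$ are analytic near $\lambda$. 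Riemann's removable singularity theorem then gives entire $X_c,Y_c$. No information about the multiplicity of the zeros of $S$ is needed, only $S(\lambda)=0$.

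For the growth bounds, consider $\mathcal N_S(z)=e^{-\pi(\operatorname{Im}\zeta(z))^2}|S(z)|=\mathcal N(\zeta(z))$. It is invariant under $z\mapsto z+\Omega$ and $z\mapsto z+\mathrm i\Omega$ by the periodicity of \eqref{ana:eq:gaussiannorm}, and it is continuous. It is therefore bounded by its maximum $C_S$ on the compact closed fundamental square, which proves the first bound. Next put $\mathcal N_X(z)=e^{-2\pi(\operatorname{Im}\zeta)^2}|X_c(z)|$. Off the lattice this equals $\mathcal N_S(z)^2|\wp(z)|$, which is $\Lambda_\Omega$-periodic because $\wp$ is periodic. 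By continuity of the entire extension, $\mathcal N_X$ is continuous and periodic on all of $\mathbb C$, hence bounded by some $C_X$. The same argument with $\mathcal N_S^3|\wp'|/2$ gives $C_Y$.

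\emph{The only delicate point} is to state periodicity of the normalized $X_c,Y_c$ as an identity of continuous functions on all of $\mathbb C$. Formally we would prove it on the dense open lattice complement and extend by continuity, rather than evaluate at the poles. This is where I expect the work in the formalization to lie: choosing compact sets, proving the continuity of the modulus-times-exponential, and transferring the periodic identity through the removable singularity.

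For \eqref{ana:eq:uniformmonomialgrowth}, multiply the three bounds. The exponent is $\pi(\operatorname{Im}\zeta)^2\bigl((D-2a-3b)+2a+3b\bigr)=\pi D(\operatorname{Im}\zeta)^2$. The constant is $C_S^{D-2a-3b}C_X^aC_Y^b\le C_g^D$ with $C_g=\max(1,C_S,C_X,C_Y)$, since $a+b\le 2a+3b\le D$. Finally, $|\operatorname{Im}\zeta(z)|\le |z|/\Omega+1/2\le |z|/\Omega+1$ gives the last inequality.
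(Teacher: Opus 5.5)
Your proposal is correct and follows essentially the same route as the paper. You get removable singularities from $S(\lambda)=0$ against the double and triple poles, and you note that the normalized moduli $e^{-n\pi(\operatorname{Im}\zeta)^2}|F|$ are lattice-periodic, with the identity extended from the lattice complement by continuity, so they are bounded by their maximum on a compact fundamental square; then $C_g=\max(1,C_S,C_X,C_Y)$ with exponent $D-a-2b\le D$, exactly as in the paper (your local factorization $S=(z-\lambda)g_\lambda$ just spells out the paper's order count, and the appeal to quasi-periodicity of $S$ there is unnecessary).
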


\begin{proof}
At each lattice point $S$ has a zero of order at least one, whereas
$\wp$ has a pole of order two and $\wp'$ a pole of order three.
Thus the two products have removable singularities.  Their canonical
fillings are the functions in
\leanref{LemniscateAnalyticRealization}{QuadraticPi.LemniscateAnalytic.omegaClearedX_entire}
and
\leanref{LemniscateAnalyticRealization}{QuadraticPi.LemniscateAnalytic.omegaClearedY_entire}.

For $F=S,X_c,Y_c$ and respectively $n=1,2,3$, consider
$e^{-n\pi(\operatorname{Im}\zeta(z))^2}|F(z)|$.
Away from the lattice, this is $\mathcal N(\zeta(z))^n$ times the norm
of the corresponding elliptic factor $1,\wp,\wp'/2$.  It is therefore
lattice-periodic.  Its continuity after filling the singularities extends
this identity to all $z$.  It is bounded on a compact fundamental square,
and hence everywhere; this proves \eqref{ana:eq:clearedgrowth}.
This argument retains the coefficient $\pi$ for a single multiplier,
which is needed in the subsequent radius budget.  Its formal general
form is
\leanref{LemniscateAnalyticGrowth}{QuadraticPi.LemniscateAnalytic.omega_cleared_quadratic_growth}.

Take $C_g=\max(1,C_S,C_X,C_Y)$.  The exponent of the Gaussian after
multiplication is exactly
$D-2a-3b+2a+3b=D$, while the exponent of $C_g$ is
$D-2a-3b+a+b\leq D$.  This proves the first inequality in
\eqref{ana:eq:uniformmonomialgrowth}.  The second follows from
$|\operatorname{Im}\zeta(z)|\leq|z|/\Omega+1$.
The uniform formal theorem is
\leanref{LemniscateAnalyticColumns}{QuadraticPi.LemniscateAnalytic.omegaMonomialColumn_uniform_growth};
the radius estimate is
\leanref{LemniscateAnalyticTags}{QuadraticPi.LemniscateAnalytic.omegaThetaArgument_im_sq_le}.
\end{proof}

\subsection{The common multiplier and the determinant identity}
\label{ana:subsec:transport}

Let $\Lambda_\Omega=\Omega(\mathbb Z+\mathrm i\mathbb Z)$ and
$z_*=(1+\mathrm i)\Omega/2$.  The normalization proved earlier identifies
the Weierstrass functions of this lattice with the cubic
$y^2=x^3-x$, where $x=\wp$ and $y=\wp'/2$.
Write $S(z)$ for the entire multiplier
\texttt{LemniscateAnalytic.omegaMultiplier} and write $X_c,Y_c$ for its
entire cleared coordinates.  Away from the lattice,
\[
 X_c=S^2\wp,\qquad Y_c=S^3\wp'/2.
\]
For $2a+3b\leq D$ the entire function
\begin{equation}
 g_{D,a,b}(z)=S(z)^{D-2a-3b}X_c(z)^aY_c(z)^b
 \label{ana:eq:entiremonomial}
\end{equation}
satisfies $g_{D,a,b}=S^D\wp^a(\wp'/2)^b$ off the lattice.
The construction and equality are
\leanref{LemniscateAnalyticColumns}{QuadraticPi.LemniscateAnalytic.omegaMonomialColumn_entire}
and
\leanref{LemniscateAnalyticColumns}{QuadraticPi.LemniscateAnalytic.omegaMonomialColumn_eq}.
Use the same integer
\begin{equation}
 d_H=\lceil H/w_0\rceil
 \label{ana:eq:commonD}
\end{equation}
for every selected column.

The rational and true period centers differ by the constants
\begin{equation}
 \delta_{j,i}=\lambda_j(p_i/q_i-\Omega).
 \label{ana:eq:delta}
\end{equation}
For a column $c=(a,b,\alpha)$ define
\begin{equation}
 f_{c,\beta,\delta}(z)=g_{d_H,a,b}(z)
       \prod_{i=1}^m\binom{\alpha_i}{\beta_i}
                  (z-z_*+\delta_i)^{\alpha_i-\beta_i}.
 \label{ana:eq:displacedcol}
\end{equation}
As usual, the binomial coefficient makes a factor zero when
$\beta_i>\alpha_i$; the exponent in that case is interpreted as natural
subtraction, as in the formal source.
Let $\mathcal T_k(F)$ denote the coefficient of $u^k$ at zero,
equivalently its Cauchy integral on $|u|=1/2$ for the functions used below.
The cleared matrix associated with the \emph{same} selection is
\begin{equation}
 B_{H,\iota}(r,c)=
 \mathcal T_{k_r}\!\left(
 f_{\iota(c),\beta_r,\delta_j}
       (z_*+\Omega\lambda_j+\log(1+u))\right).
 \label{ana:eq:clearedmatrix}
\end{equation}
Here the logarithm is analytic on a neighbourhood of the closed
$1/2$-disk.  The entire functions in \eqref{ana:eq:displacedcol} make the
coefficient and contour definitions agree; no global choice of a
logarithm is required.

\begin{proposition}[Exact transport of the selected determinant]
\label{ana:prop:transport}
For every integer $H>0$, every $L$ with $H\leq Lv_0$, and every
selection $\iota$, one has
\begin{equation}
 |\det B_{H,\iota}|=
 \left(\prod_{j\in\{0,\ldots,N-1\}^2}
          |S(z_*+\Omega\lambda_j)|^{d_H|\mathcal B_H|}\right)
 |\Delta_{H,L,\iota}|.
 \label{ana:eq:exactnorm}
\end{equation}
There is a constant $U\geq1$, independent of $H,m,N,p,q$ and $\iota$,
such that every multiplier in this product is nonzero and
\begin{equation}
 |S(z_*+\Omega\lambda_j)^{-D}|\leq U^D\qquad(D\in\mathbb N).
 \label{ana:eq:inverseS}
\end{equation}
Consequently
\begin{equation}
 |\Delta_{H,L,\iota}|\leq U^{d_HM_H}|\det B_{H,\iota}|.
 \label{ana:eq:removemultiplier}
\end{equation}
\end{proposition}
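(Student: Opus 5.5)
The plan is to compare $B_{H,\iota}$ and the full-formal version of $\Delta_{H,L,\iota}$ row block by row block. In each block the two matrices should differ by left multiplication with a lower-triangular matrix whose diagonal is known explicitly. The same column selection $\iota$ appears on both sides and is never touched; only row operations are used.

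\emph{Step 1: rewrite a single entry.} Fix a row $r=(j,k,\beta)$ and a column $c=(a,b,\alpha)$. Put $z=z_*+\Omega\lambda_j+\log(1+u)$.
\begin{itemize}
\item Lattice periodicity of $\wp,\wp'$ and the identity $g_{d_H,a,b}=S^{d_H}\wp^a(\wp'/2)^b$ off the lattice give
\[
 g_{d_H,a,b}(z)=\sigma_j(u)\,a_*(u)^a\,b_*(u)^b,
 \qquad
 \sigma_j(u)=S\bigl(z_*+\Omega\lambda_j+\log(1+u)\bigr)^{d_H},
\]
as an identity of Taylor series at $u=0$. Here $a_*,b_*$ are the series of \eqref{norm:logfunctions}. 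The factor $\sigma_j$ is common to every column in block $j$.
\item By \eqref{ana:eq:delta}, $z-z_*+\delta_{j,i}=\lambda_jp_i/q_i+\log(1+u)=c_{ji}+L(u)$.
\item By \eqref{norm:loggerm}, $a_*=x_0(b_*)$ and $L=\ell(b_*)$.
\item The factor $\binom{\alpha_i}{\beta_i}(c_{ji}+\ell)^{\alpha_i-\beta_i}$ is exactly $[u_i^{\beta_i}]$ of $(c_{ji}+u_i+\ell)^{\alpha_i}$.
\end{itemize}
Together these show that $u\mapsto f_{c,\beta_r,\delta_j}(z)$ equals $\sigma_j(u)\cdot\Phi_{j,\beta,c}(b_*(u))$. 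Here $\Phi_{j,\beta,c}(t)$ is the $u^\beta$-component of the full formal jet $P_c\bigl(x_0(t),t,(c_{ji}+u_i+\ell(t))_i\bigr)$ from \eqref{epa:packet-map}.

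\emph{Step 2: triangular structure.} Fix $j$ and $\beta$. Taking $u^k$-coefficients for all $k$ with $(k,\beta)\in\mathcal B_H$ is the composite of two maps applied to the vector of $t^k$-coefficients of $\Phi$:
\begin{itemize}
\item the substitution $t=b_*(u)$, which is lower triangular in $k$ with diagonal entries $([u]b_*)^k=(-1)^k$ by \eqref{norm:loglinear};
\item multiplication by $\sigma_j$, which is lower triangular with diagonal entries $\sigma_j(0)=S(z_*+\Omega\lambda_j)^{d_H}$.
\end{itemize}
This is legitimate because $\mathcal B_H$ is downward closed in $k$ for fixed $\beta$, so only retained rows are involved.

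Hence the rows of $B_{H,\iota}$ are obtained from the rows of the full-formal selected matrix by a block-diagonal, blockwise lower-triangular matrix. Its determinant has absolute value $\prod_j|S(z_*+\Omega\lambda_j)|^{d_H|\mathcal B_H|}$. Lemma \texttt{det\_fullLog\_selected\_eq\_selectedMinor}, cited in \S\ref{ana:subsec:notation}, identifies the full-formal determinant with $\Delta_{H,L,\iota}$. This proves \eqref{ana:eq:exactnorm}.

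\emph{Step 3: the constant $U$.} By \eqref{ana:eq:thetagridlower}, $|S(z_*+\Omega\lambda_j)|\geq\Theta(0)>0$. Therefore $U=\max(1,\Theta(0)^{-1})$ gives \eqref{ana:eq:inverseS}, and $U$ is independent of $H,m,N,p,q$ and $\iota$. Dividing \eqref{ana:eq:exactnorm} by the product and using $K|\mathcal B_H|=M_H$ yields \eqref{ana:eq:removemultiplier}.

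The main obstacle is Step 2: the bookkeeping that the $u$-jet rows, the $\beta$-packets and the natural-subtraction exponent convention line up. One must check that the $u_i$-variable of the arithmetic jet really becomes the binomial factor, that nothing leaves the retained row set, and that Taylor coefficients and contour coefficients agree. The last point holds because every function involved is analytic on the closed $1/2$-disk.
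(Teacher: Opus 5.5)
Your proposal is correct and follows the paper's proof essentially step for step. You pass to the full formal logarithm by the determinant-one row change. On each $(j,\beta)$ packet you then realize the passage to $B_{H,\iota}$ as the triangular substitution $t=b_*(u)$ (diagonal $(-1)^k$) composed with triangular multiplication by the Taylor series of $S(z_*+\Omega\lambda_j+\log(1+u))^{d_H}$, with periodicity and \eqref{ana:eq:delta} matching the entries. Your choice of $U$ from the exact grid values \eqref{ana:eq:thetagridlower} is the one the paper records there (indeed $\Theta(0)>1$, so $U=1$ already works).
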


\begin{proof}
First pass from the truncated logarithms to the full formal logarithm
by the determinant-one filtered row shear just described. The base
truncation is coefficientwise exact. Then the formal curve parameter is replaced by
$\psi(u)=y(z_*+\log(1+u))$.  Normalization gives
$\psi(0)=0$ and $\psi'(0)=-1$; the formal elliptic logarithm then
becomes $\log(1+u)$.  The induced map on each weighted jet packet is
triangular, with diagonal $(-1)^k$ at base order $k$, so its determinant
has absolute value one.  The same weighted packet is stable under
multiplication by the Taylor series of
$S(z_*+\Omega\lambda_j+\log(1+u))^{d_H}$.  Its diagonal is the constant
$S(z_*+\Omega\lambda_j)^{d_H}$, repeated $|\mathcal B_H|$ times.
Periodicity identifies the elliptic coordinates at the shifted centers,
and \eqref{ana:eq:delta} identifies the additive coordinates exactly.
Multiplying these triangular determinants proves \eqref{ana:eq:exactnorm}.
This also explains why selected columns remain unchanged.

The grid points are lattice translates of the fixed diagonal half-period.
The multiplier's uniform nonvanishing and inverse-power bound at these
translates gives \eqref{ana:eq:inverseS}.  Inverting the nonzero factors
in \eqref{ana:eq:exactnorm} proves \eqref{ana:eq:removemultiplier}.
The actual matrix identity and norm identity are
\leanref{LemniscateActualMinorTransport}{QuadraticPi.LemniscateActualMinorTransport.clearedMinor_eq_transformed}
and
\leanref{LemniscateActualMinorTransport}{QuadraticPi.LemniscateActualMinorTransport.norm_clearedMinor_eq}.
The uniform estimate is
\leanref{LemniscateMultiplierBudget}{QuadraticPi.LemniscateAnalytic.omegaInverseConstant_bound},
and its application to this matrix is
\leanref{LemniscateIrrationalityExponent}{QuadraticPi.LemniscateIrrationalityExponent.norm_actual_minor_le}.
\end{proof}

\subsection{Finite translation and collision estimates}
\label{ana:subsec:translation}

Let $\mathcal A_H=\{\tau\in\mathbb N^m:W(\tau)\leq H\}$ and
$T_H=|\mathcal A_H|$.  For a tag $\tau\in\mathcal A_H$ set
\begin{align}
 G_{c,\tau}(z)&=
 \left(\prod_i\binom{\alpha_i}{\tau_i}\right)
 g_{d_H,a,b}(z)(z-z_*)^{\sum_i(\alpha_i-\tau_i)},
 \label{ana:eq:tagcol}\\
 s(\beta,\tau,\delta)&=
 \prod_i\binom{\tau_i}{\beta_i}\delta_i^{\tau_i-\beta_i}.
 \label{ana:eq:rowscalar}
\end{align}
The finite binomial theorem gives
\begin{equation}
 f_{c,\beta,\delta}(z)=
     \sum_{\tau\in\mathcal A_H}s(\beta,\tau,\delta)G_{c,\tau}(z).
 \label{ana:eq:finiteexpansion}
\end{equation}
The support restriction follows from $\tau\leq\alpha$ and
$W(\alpha)\leq H$.  Thus the displacement constants are treated by an
exact finite identity, with no remainder series.
For an assignment $g:\mathcal R_H\to\mathcal A_H$ let
\[
 B_g(r,c)=\mathcal T_{k_r}\!\left(
       G_{\iota(c),g(r)}(z_*+\Omega\lambda_j+\log(1+u))\right).
\]
Multilinearity in rows yields
\begin{equation}
 \det B_{H,\iota}=
 \sum_{g:\mathcal R_H\to\mathcal A_H}
 \left(\prod_{r\in\mathcal R_H}
             s(\beta_r,g(r),\delta_j)\right)\det B_g.
 \label{ana:eq:detexpansion}
\end{equation}
These identities are
\leanref{LemniscateDisplacementExpansion}{QuadraticPi.LemniscateDisplacementExpansion.displacedColumn_expansion}
and
\leanref{LemniscateTranslatedDeterminant}{QuadraticPi.LemniscateTranslatedDeterminant.det_displacedMatrix_expansion}.

Assume $\nu>2$ and
\begin{equation}
 |\Omega-p_i/q_i|\leq q_i^{-\nu}\quad(1\leq i\leq m),
 \qquad 0<w_{\min}\leq\min_iw_i.
 \label{ana:eq:badapp}
\end{equation}
Put $C_\delta=\log(\sqrt2N)+\nu$.  Since
$|\lambda_j|\leq\sqrt2N$ and $w_i\leq\log q_i+1$,
\begin{equation}
 |\delta_{j,i}|\leq\exp(C_\delta-\nu w_i).
 \label{ana:eq:deltabound}
\end{equation}
If $s(\beta,\tau,\delta)\ne0$, then $\beta\leq\tau$ coordinatewise.
The binomial bound $\binom{n}{k}\leq2^n$, together with
$\sum_i\tau_i\leq H/w_{\min}$, gives
\begin{equation}
 |s(\beta,\tau,\delta)|\leq
 \exp\left(-\nu(W(\tau)-W(\beta))
               +\frac{H(\log2+C_\delta)}{w_{\min}}\right).
 \label{ana:eq:scalarbound}
\end{equation}
In particular, for a nonzero scalar product in
\eqref{ana:eq:detexpansion}, $\sum_rW(g(r))\geq M_HH\overline W_H$, and
\begin{equation}
 \left|\prod_rs(\beta_r,g(r),\delta_j)\right|
 \leq\exp\left(-\nu\left(\sum_rW(g(r))-M_HH\overline W_H\right)
                +\frac{M_HH(\log2+C_\delta)}{w_{\min}}\right).
 \label{ana:eq:scalarproduct}
\end{equation}
The precise statements are
\leanref{LemniscateDisplacementBounds}{QuadraticPi.LemniscateDisplacement.gridDelta_exp_bound},
\leanref{LemniscateDisplacementBounds}{QuadraticPi.LemniscateDisplacement.norm_rowScalar_exp_weighted},
and
\leanref{LemniscateTranslatedDeterminant}{QuadraticPi.LemniscateTranslatedDeterminant.scalar_product_bound}.

The entire growth constant $C_g\geq1$ can be fixed before every grid,
dimension, source monomial, and tag.  For $R\geq1$ and $|z|\leq R$,
the source-degree bounds imply
\begin{equation}
 |G_{c,\tau}(z)|\leq\exp\bigl(H\mathcal H_H(R)\bigr),\qquad
 \mathcal H_H(R)=\frac{d_H}{H}
   \left(\log C_g+\pi(R/\Omega+1)^2\right)
       +\frac{\log(2(R+|z_*|))}{w_{\min}}.
 \label{ana:eq:taggrowth}
\end{equation}
Indeed the elliptic factor contributes the first term, while the binomial
and polynomial factors are bounded by
$[2(R+|z_*|)]^{\sum_i\alpha_i}$ and
$\sum_i\alpha_i\leq H/w_{\min}$.
See
\leanref{LemniscateAnalyticTags}{QuadraticPi.LemniscateAnalytic.omegaTagColumn_uniform_growth}.
Set
\begin{equation}
 r_*=2\bigl(|z_*|+\sqrt2\Omega+3/4\bigr),\quad
 R_N=r_*N,\quad \kappa=\frac{\log2}{4},\quad s_0=2^{-1/2}.
 \label{ana:eq:radius}
\end{equation}
Then $R_N\geq1$ and
$|z_*+\Omega\lambda_j|+3/4\leq R_N/2$.

\begin{lemma}[Analytic collision bound]
\label{ana:lem:collision}
Let $n_\tau=|g^{-1}(\tau)|$.  Then
\begin{equation}
 \begin{aligned}
 |\det B_g|&\leq
 \exp\left[-\kappa\sum_\tau n_\tau^2+
 M_HH\left(\mathcal H_H(R_N)+\frac{\log2}{v_0}
                         +\rho_H\right)\right],\\
 \rho_H&=\frac{\log M_H+\kappa-\log(1-s_0)}{H}.
 \end{aligned}
 \label{ana:eq:collision}
\end{equation}
\end{lemma}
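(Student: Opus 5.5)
The plan is to exploit the fact that in $B_g$ each row $r$ carries a single tag $g(r)$. Rows with the same tag $\tau$ are coefficient extractions of the \emph{same} family of entire functions $\{G_{c,\tau}\}_{c}$. They are taken at various grid centers $z_*+\Omega\lambda_j$ and base orders $k_r$.

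\textbf{Block decomposition.} First partition $\mathcal R_H=\bigsqcup_\tau g^{-1}(\tau)$ and apply the generalized Laplace expansion along these row blocks. This writes $\det B_g$ as a signed sum, over ordered partitions of the column set into pieces $C_\tau$ with $|C_\tau|=n_\tau$, of products $\prod_\tau\det B_g[g^{-1}(\tau),C_\tau]$. The number of terms is a multinomial coefficient, at most $M_H!\le\exp(M_H\log M_H)$. This produces the $\log M_H/H$ part of $\rho_H$ after normalizing by $M_HH$. It therefore suffices to show, for each block,
\[
 |\det B_g[g^{-1}(\tau),C_\tau]|\le \exp\Bigl[-\kappa n_\tau^2+\sum_{r\in g^{-1}(\tau)}\bigl(H\mathcal H_H(R_N)+k_r\log2\bigr)+O(n_\tau)\Bigr],
\]
with the $O(n_\tau)$ absorbed by $\kappa-\log(1-s_0)$. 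Summing over $\tau$ and using $\sum_rk_r\le M_HH/v_0$ from \eqref{ana:eq:rowbounds} then gives \eqref{ana:eq:collision}.

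\textbf{Single-block estimate (main obstacle).} For a fixed block, view each entry as a Cauchy integral on $|u|=1/2$ of $\varphi_c(z_j+\log(1+u))$, where $\varphi_c=G_{c,\tau}$ and $z_j=z_*+\Omega\lambda_j$. Since $|\log(1+u)|\le 3/4$ there, every evaluation point lies in the disk $|z|\le R_N/2$ by the choice \eqref{ana:eq:radius}. On $|z|\le R_N$, \eqref{ana:eq:taggrowth} bounds each $\varphi_c$ by $\exp(H\mathcal H_H(R_N))$. I would then run the standard interpolation-determinant (Schwarz lemma) argument. Introduce an auxiliary complex variable $t$ and the entire function
\[
 \Phi(t)=\det\bigl(\mathcal T_{k_r}\varphi_c(t(z_j+\log(1+u))\,)\bigr)
\]
(or an equivalent dilation of all evaluation points toward a common center). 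Expanding $\varphi_c$ in Taylor series shows that $\Phi$ vanishes at $t=0$ to order at least roughly $n_\tau^2/2$ minus the lower-order row terms. The reason is that $n_\tau$ linearly independent rows must use Taylor degrees $0,1,\dots,n_\tau-1$ in total. Comparing $|\Phi(1)|$ with $\sup_{|t|=2}|\Phi|$ through the maximum principle yields a factor $2^{-(n_\tau^2-n_\tau)/2}$. The $\sup$ is controlled by Hadamard's inequality: $n_\tau!$ times the product of row bounds, each row costing $\exp(H\mathcal H_H(R_N))\,2^{k_r}$ from the $1/2$-radius Cauchy estimate. The ratio $s_0=2^{-1/2}$ and the term $-\log(1-s_0)$ suggest the authors instead sum a geometric series of Taylor tails at ratio $s_0$. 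I would follow that route if the dilation bookkeeping with the $\log(1+u)$ chart gets messy. Keeping only half of the exponent $\tfrac{\log 2}{2}n_\tau^2$ gives $\kappa=\log2/4$, leaving room to absorb $n_\tau!$ and the linear corrections.

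The delicate point is precisely this vanishing-order count. Rows are indexed by pairs $(j,k)$ at $N^2$ \emph{different} centers rather than a single point, so the dilation must be centered at the origin of the $R_N$-disk and not at each $z_j$. One must then verify that the composite map $u\mapsto t(z_j+\log(1+u))$ keeps the $k$-th coefficient extraction triangular in the Taylor degree of $\varphi_c$. I would check this first. The remaining steps (Laplace counting, Hadamard, summing $k_r$) are routine bookkeeping, and they correspond to the formal statements \leanref{}{QuadraticPi.LemniscateAnalytic.omegaTaggedDeterminant_exp_bound} and \leanref{}{OAI.PiExponent.AnalyticCollision.translated_row_determinant_exp_bound}.
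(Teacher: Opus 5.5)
Your argument is correct, but it gets the bound by a different mechanism from the paper's.

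\textbf{The paper's route.} The paper uses no auxiliary parameter. It expands each tag column at the origin in powers of $z/R_N$, and bounds the normalized coefficients by $\exp(H\mathcal H_H(R_N))$ via Cauchy. Using the half-radius condition, it bounds $\mathcal T_{k_r}$ of $\bigl((z_*+\Omega\lambda_j+\log(1+u))/R_N\bigr)^d$ by $2^{k_r}(s_0^2)^d$. It then expands $\det B_g$ multilinearly over all degree assignments $d:\mathcal R_H\to\mathbb N$. Terms in which two rows with the same tag receive the same degree vanish, so surviving terms have $\sum_r d(r)\geq\sum_\tau\binom{n_\tau}{2}$. Writing $s_0^{2d}=s_0^d\cdot s_0^d$, one factor is summed as a geometric series, which is the source of $(1-s_0)^{-M_H}$. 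The other factor keeps $s_0^{\sum_\tau\binom{n_\tau}{2}}$, and this is exactly where $\kappa=\frac{\log 2}{4}$ comes from.

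\textbf{Your route.} Your Schwarz-lemma argument turns the same distinct-degree observation into a zero of order at least $\binom{n_\tau}{2}$ at $t=0$. You then apply the maximum principle on $|t|=2$, which is legitimate since $2(|z_*+\Omega\lambda_j|+3/4)\leq R_N$. This gives the full factor $2^{-\binom{n_\tau}{2}}$ with no geometric-series loss. The result is a stronger inequality, and it implies \eqref{ana:eq:collision} because $\sum_\tau n_\tau^2\geq M_H$ and $-\log(1-s_0)>0$.

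\textbf{Comparison.} The paper's version is a single absolutely convergent expansion with explicit constants, which suits the formal lemma. Yours is shorter and sharper on paper.

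\textbf{Two remarks on your write-up.}

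First, the triangularity you plan to check is neither true nor needed. For instance, with $z_j=z_*+\Omega\lambda_j\neq0$, one has $\mathcal T_0\bigl((z_j+\log(1+u))^d\bigr)=z_j^d\neq0$ for every $d$. The vanishing order at $t=0$ comes only from the factor $t^d$, which passes unchanged through the linear functional $\mathcal T_{k_r}$.

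Second, the Laplace block decomposition is unnecessary. Applying the dilation to the whole $M_H\times M_H$ determinant already gives vanishing order at least $\sum_\tau\binom{n_\tau}{2}$, because rows sharing a tag and a degree are proportional. It also gives a single factor $M_H!$. If you keep the blocks, the multinomial coefficient times $\prod_\tau n_\tau!$ equals $M_H!$ exactly, so you need not absorb the block factorials into the spare quadratic term (although that absorption does work numerically).
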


\begin{proof}
Expand each entire tag column in its Taylor series on the radius-$R_N$
disk.  Cauchy's estimate bounds the normalized Taylor coefficients by
$\exp(H\mathcal H_H(R_N))$.  For the translated logarithmic row
functional, the coefficient attached to degree $d$ has absolute value at
most $2^{k_r}(s_0^2)^d$.  The half-radius bound above and
$|\log(1+u)|\leq3/4$ on $|u|=1/2$ give this estimate.
In a nonzero determinant term, rows with the same tag must use distinct
Taylor degrees.  Their degree sum is at least $\binom{n_\tau}{2}$.
Splitting the geometric decay into two equal factors bounds the
determinant by
\[
 M_H!\,\exp(M_HH\mathcal H_H(R_N))
 2^{\sum_rk_r}\,
 s_0^{\sum_\tau\binom{n_\tau}{2}}
 (1-s_0)^{-M_H}.
\]
Absolute convergence justifies passage from finite Taylor sums to the
whole determinant.  Now use $M_H!\leq\exp(M_H\log M_H)$,
\eqref{ana:eq:rowbounds}, and
$2\sum_\tau\binom{n_\tau}{2}=\sum_\tau n_\tau^2-M_H$.
This proves \eqref{ana:eq:collision} with its displayed constant.
The instantiated lemma is
\leanref{LemniscateAnalyticCollision}{QuadraticPi.LemniscateAnalytic.omegaTaggedDeterminant_exp_bound};
the general analytic lemma is
\leanref{OAI/NumberTheory/PiExponent/Analysis/AnalyticDeterminantCollision}{OAI.PiExponent.AnalyticCollision.translated_row_determinant_exp_bound}.
\end{proof}

Fix additionally $A>0$ and $0<\eta<1$, and let
\begin{equation}
 L_H=|\{\tau\in\mathbb N^m:W(\tau)\leq AH\}|,
 \qquad c_H=\frac{\kappa\eta^2M_H}{HL_H}.
 \label{ana:eq:collisionrate}
\end{equation}
For each assignment in \eqref{ana:eq:detexpansion} there are two cases.
If $\sum_rW(g(r))\geq A(1-\eta)M_HH$, then
\eqref{ana:eq:scalarproduct} supplies the normalized negative term
$-\nu(A(1-\eta)-\overline W_H)$.  Otherwise more than $\eta M_H$ rows
have tags of weight at most $AH$.  At most $L_H$ tags can occur there,
so Cauchy--Schwarz gives
$\sum_\tau n_\tau^2\geq\eta^2M_H^2/L_H$.  In this second case the
scalar product loses no negative weight, since every nonzero such product
has $\sum_rW(g(r))\geq M_HH\overline W_H$; the collision bound therefore
supplies $-c_H$.
There are $T_H^{M_H}$ assignments.  Summing their bounds and taking
logarithms proves, when $\det B_{H,\iota}\ne0$,
\begin{align}
 \frac{\log|\det B_{H,\iota}|}{M_HH}
 \leq{}&\frac{\log2+C_\delta}{w_{\min}}+
   \mathcal H_H(R_N)+\frac{\log2}{v_0}+\rho_H
      +\frac{\log T_H}{H}\notag\\
 &+\max\{-c_H,-\nu(A(1-\eta)-\overline W_H)\}.
 \label{ana:eq:rawupper}
\end{align}
This is
\leanref{LemniscateTranslatedDeterminant}{QuadraticPi.LemniscateTranslatedDeterminant.displaced_determinant_log_bound}.

\subsection{Fixed constants and the finite-cutoff bound}
\label{ana:subsec:bound}

With $C_g,U,r_*$ fixed as above, put
\begin{equation}
 C_0=\log(C_gU)+\pi(r_*/\Omega+1)^2,\qquad
 C_w=4\sqrt2(r_*+|z_*|).
 \label{ana:eq:fixedconstants}
\end{equation}
Then $C_0\geq0$ and $C_w\geq1$.  These constants depend only on the
fixed lemniscate lattice and its chosen entire multiplier; in particular
they precede the choice of $\nu$, the dimension, the weights, the grid,
the rational approximations, and the selected columns.
They are exactly \texttt{fixedCost} and \texttt{fixedWeightCost} in
\leanref{LemniscateSameMinorBound}{QuadraticPi.LemniscateSameMinorBound.fixedCost}.
Define
\begin{align}
 E_{\rm an}&=\frac{C_0K}{w_0}+\frac{\log2}{v_0}
                  +\frac{\nu+\log(C_wK)}{w_{\min}},
 \label{ana:eq:anerror}\\
 e_H&=\frac{C_0K}{H}+\rho_H+\frac{\log T_H}{H}.
 \label{ana:eq:remainder}
\end{align}

\begin{proposition}[Upper bound for the literal arithmetic minor]
\label{ana:prop:upper}
Under \eqref{ana:eq:badapp}, for every integer $H>0$, every $L$ with
$H\leq Lv_0$, and every selection $\iota$ with
$\Delta_{H,L,\iota}\ne0$, one has
\begin{equation}
 \frac{\log|\Delta_{H,L,\iota}|}{M_HH}
 \leq E_{\rm an}+e_H+
          \max\{-c_H,-\nu(A(1-\eta)-\overline W_H)\}.
 \label{ana:eq:finalupper}
\end{equation}
For fixed finite data, $e_H\longrightarrow0$ as $H\longrightarrow\infty$.
\end{proposition}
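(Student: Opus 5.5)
The plan is to chain the three analytic ingredients already established and then absorb all fixed-size terms into $E_{\rm an}$ and $e_H$. First apply Proposition~\ref{ana:prop:transport}. By \eqref{ana:eq:removemultiplier},
\[
 \frac{\log|\Delta_{H,L,\iota}|}{M_HH}\leq \frac{d_H\log U}{H}+\frac{\log|\det B_{H,\iota}|}{M_HH}.
\]
Here $\det B_{H,\iota}\neq0$, since $\Delta_{H,L,\iota}\ne0$ and the multipliers in \eqref{ana:eq:exactnorm} are nonzero. Next insert the raw bound \eqref{ana:eq:rawupper}, which already contains the case split over assignments $g$: the expansion \eqref{ana:eq:detexpansion}, the scalar bound \eqref{ana:eq:scalarproduct}, and the collision Lemma~\ref{ana:lem:collision}. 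After these two steps, the maximum term $\max\{-c_H,-\nu(A(1-\eta)-\overline W_H)\}$ and the terms $\rho_H$, $\log T_H/H$, and $\log2/v_0$ already have their final form.

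It remains to bound
\[
 \frac{d_H\log U}{H}+\mathcal H_H(R_N)+\frac{\log2+C_\delta}{w_{\min}}
\]
by $C_0K/w_0+(\nu+\log(C_wK))/w_{\min}+C_0K/H$. For the $d_H$ terms, use $d_H=\lceil H/w_0\rceil\leq H/w_0+1$, so that $d_H/H\leq 1/w_0+1/H$. The combined coefficient is $\log U+\log C_g+\pi(R_N/\Omega+1)^2$. Since $R_N=r_*N$ and $N\geq1$, we have $(r_*N/\Omega+1)^2\leq N^2(r_*/\Omega+1)^2=K(r_*/\Omega+1)^2$. Together with $\log(C_gU)\geq0$ and $K\geq1$, this gives a coefficient of at most $C_0K$. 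That yields the $C_0K/w_0$ term of $E_{\rm an}$ and the $C_0K/H$ term of $e_H$.

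For the $1/w_{\min}$ terms, collect $\log(2(R_N+|z_*|))+\log2+\log(\sqrt2N)+\nu$. We have $2(R_N+|z_*|)\leq 2N(r_*+|z_*|)$, so the sum of logarithms is at most $\log\bigl(4\sqrt2N^2(r_*+|z_*|)\bigr)=\log(C_wK)$. Hence these terms are bounded by $(\nu+\log(C_wK))/w_{\min}$, which matches \eqref{ana:eq:anerror}. This bookkeeping is where the stated constants must fit exactly; I expect the main care to lie in checking that $C_0$ and $C_w$, as defined in \eqref{ana:eq:fixedconstants}, are just large enough.

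Finally, for fixed data ($m,N,w,v_0,w_0,A,\eta$) I show $e_H\to0$. The term $C_0K/H$ tends to $0$ trivially. The set $\mathcal A_H$ has $T_H\leq(H/w_{\min}+1)^m$, so $\log T_H/H\to0$. Similarly $M_H=K|\mathcal B_H|\leq K(H/v_0+1)(\theta H/w_{\min}+1)^m$ grows polynomially, so $\rho_H=(\log M_H+\kappa-\log(1-s_0))/H\to0$. The paper's counting lemmas (\texttt{tendsto\_rowCount\_normalized}-type results) could replace this with sharper asymptotics, but the crude polynomial bounds suffice. I expect no real obstacle beyond the constant matching of the middle step.
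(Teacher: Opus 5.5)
Your proposal is correct and follows the paper's proof essentially step for step. You remove the multiplier via \eqref{ana:eq:removemultiplier}, insert \eqref{ana:eq:rawupper}, and absorb the common-power and radius-logarithm costs using $d_H/H\leq 1/w_0+1/H$, $\log(C_gU)+\pi(r_*N/\Omega+1)^2\leq C_0K$, and the $\log(C_wK)$ consolidation, exactly as the paper does. The only deviation is that you prove $e_H\to0$ with crude polynomial upper bounds on $T_H$ and $M_H$ instead of the paper's weighted simplex asymptotics \eqref{ana:eq:counting}; these bounds suffice once combined with the trivial lower bounds $M_H,T_H\geq1$, which you should state.
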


\begin{proof}
Proposition~\ref{ana:prop:transport} implies that the cleared determinant
is nonzero and adds $(d_H/H)\log U$ to \eqref{ana:eq:rawupper}.
The estimates
\[
 \frac{d_H}{H}\leq\frac1{w_0}+\frac1H,
 \qquad
 \log(C_gU)+\pi(r_*N/\Omega+1)^2\leq C_0N^2
\]
bound the common-power cost.  The remaining radius logarithms satisfy
\[
 \log2+\log(\sqrt2N)+\log(2(r_*N+|z_*|))
       \leq\log(C_wN^2).
\]
These give exactly \eqref{ana:eq:finalupper}, using $N^2=K$.
This numerical consolidation is
\leanref{LemniscateAnalyticFinalBudget}{QuadraticPi.LemniscateAnalytic.final_holomorphic_budget};
the finite-cutoff assembly is
\leanref{LemniscateSameMinorBound}{QuadraticPi.LemniscateSameMinorBound.explicit_bound_of_norm_transfer}.

Weighted simplex counting, with all weights already fixed, gives
\begin{align}
 \frac{M_H}{H^{m+1}}&\longrightarrow
   \frac{K\theta^m}{(m+1)!v_0\prod_iw_i},
 &\frac{L_H}{H^m}&\longrightarrow
   \frac{A^m}{m!\prod_iw_i}.
 \label{ana:eq:counting}
\end{align}
The analogous count for $T_H$ has leading coefficient
$1/(m!\prod_iw_i)$.  All these coefficients are positive.  Hence
$\log M_H/H\to0$ and $\log T_H/H\to0$, proving $e_H\to0$.
Moreover,
\begin{equation}
 c_H\longrightarrow c_\infty
       =\frac{\kappa\eta^2K\theta^m}{(m+1)v_0A^m}.
 \label{ana:eq:climit}
\end{equation}
The counting results are
\leanref{OAI/NumberTheory/PiExponent/Approximation/MatrixCounting}{OAI.PiExponent.MatrixCounting.tendsto_rowCount_normalized}
and
\leanref{OAI/NumberTheory/PiExponent/Approximation/MatrixCounting}{OAI.PiExponent.MatrixCounting.tendsto_lowIndexCount_normalized};
the two limits used in the final assembly are
\leanref{LemniscateAnalyticRemainder}{QuadraticPi.LemniscateAnalyticRemainder.tendsto_remainder}
and
\leanref{LemniscateExponentConditional}{QuadraticPi.LemniscateExponentConditional.tendsto_collisionRate}.
\end{proof}

\begin{remark}
The formal interface
\texttt{LemniscateExponentConditional.SameMinorAnalyticStatement}
quantifies first over $\nu>2$ and admissible scalar data, then over
$p,q,w_{\min}$ satisfying \eqref{ana:eq:badapp}; it supplies a single
function $e(H)\to0$ valid eventually for \emph{every} allowed $L$ and
every nonzero selected minor.  Proposition~\ref{ana:prop:upper} supplies
the stronger finite-cutoff bound with the explicit
\eqref{ana:eq:remainder}.  Thus the interface is proved by
\leanref{LemniscateIrrationalityExponent}{QuadraticPi.LemniscateIrrationalityExponent.sameMinorAnalyticStatement}.
The word \texttt{Conditional} and the historical comments in the
intermediate module record its earlier role as a generic implication;
they do not represent an assumption in this final theorem.
\end{remark}

\section{Parameter selection and the irrationality exponent}
\label{ana:sec:exponent}

\subsection{Choosing the dimension before the denominators}
\label{ana:subsec:parameters}

Let $c_{\mathrm{lcm}}=\log4+4$ denote the fixed arithmetic constant
\texttt{OAI.PiExponent.Arithmetic.lcmConstant} occurring in the
denominator-clearing lower bound.  For a chosen $\nu>2$, choose rational
$\theta,A,B,C$ and real $\eta$ such that
\begin{gather}
 0<\theta<A<B<1,
 \quad 1<C,
 \quad B<C^{-1},
 \quad B<C\theta<1,
 \quad0<\eta<1,\label{ana:eq:params}\\
 G:=\nu\bigl(A(1-\eta)-\theta\bigr)-(1-\theta)>0,
 \qquad \varepsilon=\frac{\min(1,G)}4.
 \label{ana:eq:gapbudget}
\end{gather}
For completeness, these inequalities are simultaneously possible because
$\nu>2$.  Choose rational
$b_0\in(1/2,1-1/\nu)$ and sufficiently small rational $\delta>0$;
then $\theta=1-\delta$ and $A=1-b_0\delta$ satisfy
$A^2<\theta<A<1$ and $\nu(A-\theta)>1-\theta$.
Choose rational $C$ in $(A/\theta,1/A)$, and then rational
$B$ in $(A,\min(C^{-1},C\theta))$.
Finally take a sufficiently small positive $\eta$ to preserve the
strict exponent gap.  This is the parameter construction
\leanref{OAI/NumberTheory/PiExponent/Approximation/Parameters}{OAI.PiExponent.exists_parameters}.

For a dimension $m$ to be chosen large, set
\begin{equation}
 N=\lfloor\sqrt{C^m}\rfloor,
 \quad K=N^2,
 \quad w_0=B^{-m},
 \quad v_0=4K\theta^mB^{-m}.
 \label{ana:eq:squaredimension}
\end{equation}
In particular these weights are rational, $N\geq1$, and
\begin{equation}
 K\theta^m<1,
 \qquad K(w_0/v_0)\theta^m=\frac14<1.
 \label{ana:eq:epavolumes}
\end{equation}
These are the two volume conditions required by EPA.

\begin{lemma}[A square grid with strict margins]
\label{ana:lem:margin}
The dimension $m\geq1$ can be chosen so that
\begin{equation}
 \frac{c_{\mathrm{lcm}} m/\theta+\log2}{v_0}
               +\frac{C_0K}{w_0}<\varepsilon,
 \qquad c_\infty>3.
 \label{ana:eq:fixedmargin}
\end{equation}
\end{lemma}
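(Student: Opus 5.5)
The plan is to substitute the explicit choices \eqref{ana:eq:squaredimension} into both quantities in \eqref{ana:eq:fixedmargin} and reduce each one to a ratio of exponentials in $m$, possibly multiplied by a polynomial factor. The strict inequalities in \eqref{ana:eq:params} then make the first quantity tend to $0$ and the second tend to $+\infty$ as $m\to\infty$. Since $\varepsilon>0$ and the constants $C_0,\kappa,\eta$ are fixed before $m$, both conditions hold for all sufficiently large $m$, and any such $m$ can be taken.

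\textbf{Bounds on $K$.} Since $N=\lfloor\sqrt{C^m}\rfloor$, we have $K=N^2\le C^m$. Because $C>1$, we have $C^m\ge4$ for large $m$, and then
\[
N\ge\sqrt{C^m}-1\ge\tfrac12\sqrt{C^m},
\]
so $K\ge C^m/4$ and in particular $N\ge1$. This floor-rounding step is the only one needing care; everything else is direct substitution.

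\textbf{First inequality.} The upper bound on $K$ gives
\[
\frac{C_0K}{w_0}\le C_0\,(CB)^m\longrightarrow0,
\]
because $B<C^{-1}$. The lower bound on $K$ gives
\[
v_0=4K\theta^mB^{-m}\ge\Bigl(\frac{C\theta}{B}\Bigr)^m,
\]
which grows geometrically because $B<C\theta$. The numerator $c_{\mathrm{lcm}}m/\theta+\log2$ grows only linearly in $m$, so the first summand also tends to $0$. Hence the left side of \eqref{ana:eq:fixedmargin} is less than $\varepsilon$ for large $m$.

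\textbf{Second inequality.} From \eqref{ana:eq:climit} and the definition of $v_0$,
\[
c_\infty=\frac{\kappa\eta^2K\theta^m}{(m+1)\,4K\theta^mB^{-m}A^m}
=\frac{\kappa\eta^2}{4(m+1)}\Bigl(\frac BA\Bigr)^m .
\]
The factor $K$ cancels exactly, so no rounding bound is needed here. Since $A<B$, the geometric factor dominates $m+1$, and therefore $c_\infty\to\infty$; in particular $c_\infty>3$ for large $m$.

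\textbf{Conclusion.} Take $m$ larger than the three thresholds: $C^m\ge4$, the one for the first inequality, and the one for the second. This $m$ satisfies \eqref{ana:eq:fixedmargin}, and by \eqref{ana:eq:epavolumes} the EPA volume conditions hold as well.
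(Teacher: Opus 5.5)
Your proposal is correct and follows essentially the same route as the paper. The floor-square bounds $C^m\le 4K\le 4C^m$ for large $m$ give $K/w_0\le(CB)^m$ and $v_0\ge(C\theta/B)^m$, so the first expression tends to zero, and the exact cancellation $c_\infty=\frac{\kappa\eta^2}{4(m+1)}(B/A)^m$ sends the second to infinity; your check that $\lfloor\sqrt{C^m}\rfloor\ge\frac12\sqrt{C^m}$ once $C^m\ge4$ just spells out the rounding step the paper states without detail.
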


\begin{proof}
For all sufficiently large $m$, $C^m\leq4K\leq4C^m$, whence
\[
 \frac{K}{w_0}\leq(CB)^m,
 \qquad v_0\geq(C\theta/B)^m.
\]
The inequalities in \eqref{ana:eq:params} give
$CB<1<C\theta/B$, so the first expression in
\eqref{ana:eq:fixedmargin} tends to zero.  Substituting
\eqref{ana:eq:squaredimension} in \eqref{ana:eq:climit} gives the exact
identity
\[
 c_\infty=\frac{\kappa\eta^2}{4(m+1)}(B/A)^m.
\]
Since $B/A>1$, this tends to infinity.  A single sufficiently large
dimension therefore gives both strict margins.
The floor-square estimates and limits are proved in
\leanref{LemniscateSquareParameters}{QuadraticPi.LemniscateSquareParameters.exists_squareDimension_margin};
the exact budget choice is
\leanref{LemniscateParameterMargin}{QuadraticPi.LemniscateParameterMargin.exists_margin}.
The complete package with the EPA base data is
\leanref{LemniscateDataConstruction}{QuadraticPi.LemniscateDataConstruction.exists_scalarData}.
\end{proof}

Fix this scalar data, and apply EPA to obtain its successive weight
thresholds.  Suppose for contradiction that bad approximations are
available with unbounded denominators:
\begin{equation}
 \forall Q\in\mathbb N\quad\exists p\in\mathbb Z, q\in\mathbb N,
 \quad q\geq Q,\quad |\Omega-p/q|\leq q^{-\nu}.
 \label{ana:eq:unboundedbad}
\end{equation}
Because $\Omega\ne0$ and $\nu>0$, taking $q$ sufficiently large forces
$p\ne0$.  Choose the $m$ approximations successively, so that the actual
weights $w_i=\lceil\log q_i\rceil$ exceed the EPA threshold depending
on the preceding weights.  They can at the same time all exceed any
fixed common lower bound.  This order of choice is proved in
\leanref{LemniscateDataConstruction}{QuadraticPi.LemniscateDataConstruction.exists_approximations_above_thresholds}.

The arithmetic error is
\begin{equation}
 E_{\rm ar}=\frac{c_{\mathrm{lcm}} m}{\theta v_0}
                 +c_{\mathrm{lcm}}\sum_i\frac1{w_i}+\frac{\theta}{w_{\min}}.
 \label{ana:eq:arerror}
\end{equation}
The already fixed first margin in \eqref{ana:eq:fixedmargin} covers
the terms independent of $w_{\min}$.  Now take the common lower bound
so large that
\[
 c_{\mathrm{lcm}}\sum_i\frac1{w_i}
       +\frac{\theta+\nu+\log(C_wK)}{w_{\min}}<\varepsilon.
\]
Then the same chosen approximations satisfy
\begin{equation}
 E_{\rm ar}+E_{\rm an}<2\varepsilon.
 \label{ana:eq:totalerror}
\end{equation}
This simultaneous selection, including all successive EPA thresholds,
is
\leanref{LemniscateDataConstruction}{QuadraticPi.LemniscateDataConstruction.exists_small_error_approximations}.
At this point $\nu,\theta,A,B,C,\eta,m,N,w_0,v_0,p,q,w_{\min}$
are all fixed; only the cutoff $H$, its permitted truncation level, and
the EPA-selected columns will vary.

\subsection{Comparison on cofinally many cutoffs}
\label{ana:subsec:contradiction}

\begin{theorem}[No unbounded family of overly accurate approximations]
\label{ana:thm:no-unbounded}
For every $\nu>2$, condition \eqref{ana:eq:unboundedbad} is false.
\end{theorem}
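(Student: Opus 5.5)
We argue by contradiction. Assume \eqref{ana:eq:unboundedbad} for some $\nu>2$. Fix the parameters $\theta,A,B,C,\eta$ of \eqref{ana:eq:params}--\eqref{ana:eq:gapbudget}, then the dimension $m$ and the square-grid data $N,K,w_0,v_0$ from Lemma~\ref{ana:lem:margin}. Next apply EPA to this base data. By the selection procedure following \eqref{ana:eq:unboundedbad}, choose $m$ approximations $p_i/q_i$ with $p_i\ne0$ whose weights $w_i=\lceil\log q_i\rceil$ pass the successive EPA thresholds and satisfy \eqref{ana:eq:totalerror}. All data except $H$, $L$ and $\iota$ are now frozen.

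With $F=\theta^{-1}$, Theorem~\ref{epa:arithmetic-lower-bound} supplies arbitrarily large $H$ with a nonzero selected minor $\Delta_H=\Delta_{H,L,\iota}$; here $L$ is chosen with $H\le Lv_0$, e.g.\ $L=S$. For these same $H,L,\iota$, Proposition~\ref{ana:prop:upper} applies, since \eqref{ana:eq:badapp} holds by construction. Combining the two bounds on $\log|\Delta_H|/(M_HH)$ gives
\[
 -(1-\overline W_H)-E_{\rm ar}\le E_{\rm an}+e_H+\max\{-c_H,\,-\nu(A(1-\eta)-\overline W_H)\},
\]
using that the arithmetic error in \eqref{epa:normalized-lower} with $F=1/\theta$ is exactly $E_{\rm ar}$.

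We now derive a contradiction in each case of the maximum, for $H$ large. By \eqref{ana:eq:totalerror} and $e_H\to0$, we have $E_{\rm ar}+E_{\rm an}+e_H<3\varepsilon$ eventually.

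\emph{First case (the maximum is the $\nu$-term).} We get $\nu(A(1-\eta)-\overline W_H)-(1-\overline W_H)<3\varepsilon$. The left side is increasing in $\overline W_H$ with slope $1-\nu<0$ reversed, so it is decreasing, since its derivative in $\overline W_H$ is $1-\nu<0$. Hence by \eqref{ana:eq:rowbounds}, $\overline W_H\le\theta$ gives the lower bound $G\ge4\varepsilon$, which is a contradiction.

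\emph{Second case (the maximum is $-c_H$).} We get $c_H<3\varepsilon+(1-\overline W_H)\le 3\varepsilon+1$. But $c_H\to c_\infty>3$ by \eqref{ana:eq:climit} and Lemma~\ref{ana:lem:margin}, while $\varepsilon\le1/4$, so $3\varepsilon+1\le 7/4<3$. This is a contradiction for large $H$.

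Because the admissible $H$ are cofinal, some $H$ beyond both thresholds exists, and the contradiction follows. This is the step I would formalize via \leanref{X}{QuadraticPi.cofinal_determinant_bounds_inconsistent}.

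The main obstacle is bookkeeping rather than new mathematics. We must ensure that the arithmetic minor and the analytic bound refer to literally the same $\Delta_{H,L,\iota}$ with compatible $F$ and $L$. We must also check that the error budgets match exactly, i.e.\ that the $w_{\min}$-dependent terms of $E_{\rm ar}$ and $E_{\rm an}$ are both absorbed by the common lower bound chosen before $H$. Finally, the case analysis must use only the monotonicity in $\overline W_H\in[0,\theta]$, not any limit of $\overline W_H$.
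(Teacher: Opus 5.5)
Your proposal is correct and follows essentially the paper's route: the same cofinal nonzero arithmetic minor with $F=\theta^{-1}$ (and $L=S$), Proposition~\ref{ana:prop:upper} applied to that very minor, and a two-branch contradiction using $0\le\overline W_H\le\theta$, $\nu>1$, $G\ge4\varepsilon$ and $c_\infty>3$. Your single budget $E_{\rm ar}+E_{\rm an}+e_H<3\varepsilon$ is just a repackaging of the paper's margins \eqref{ana:eq:eventualmargins}; the only blemish is the garbled monotonicity sentence in your first case, which should simply say that $\nu A(1-\eta)-1-(\nu-1)\overline W_H$ is decreasing in $\overline W_H$.
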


\begin{proof}
Choose the data and approximations just described.  EPA and the
denominator-clearing arithmetic estimate yield, for every cutoff lower
bound $H_0$, an integer $H\geq H_0$, an allowed $L$, and an injective
selection $\iota$ such that $\Delta=\Delta_{H,L,\iota}\ne0$ and
\begin{equation}
 -(1-\overline W_H)-E_{\rm ar}
        \leq V_H:=\frac{\log|\Delta|}{M_HH}.
 \label{ana:eq:arithlower}
\end{equation}
The precise unconditional arithmetic theorem is
\leanref{LemniscateUnconditionalMinors}{QuadraticPi.LemniscateUnconditionalMinors.cofinal_arithmetic_lower_bounds},
instantiated with $F=\theta^{-1}$.
Proposition~\ref{ana:prop:upper} applies to this very determinant and
gives
\begin{equation}
 V_H\leq E_{\rm an}+e_H+
       \max\{-c_H,-\nu(A(1-\eta)-\overline W_H)\}.
 \label{ana:eq:compareupper}
\end{equation}

By \eqref{ana:eq:gapbudget}, \eqref{ana:eq:fixedmargin}, and
\eqref{ana:eq:totalerror},
\[
 E_{\rm ar}+E_{\rm an}<G,
 \qquad 1+E_{\rm ar}+E_{\rm an}<c_\infty.
\]
Indeed $4\varepsilon\leq G$, $\varepsilon\leq1/4$, and
$c_\infty>3$.  The limits $e_H\to0$ and $c_H\to c_\infty$ imply
that for all sufficiently large real $H$,
\begin{equation}
 E_{\rm ar}+E_{\rm an}+e_H<G,
 \qquad 1+E_{\rm ar}+E_{\rm an}+e_H<c_H.
 \label{ana:eq:eventualmargins}
\end{equation}
Choose a cofinal integer cutoff large enough for these inequalities.
The collision branch in \eqref{ana:eq:compareupper} is then strictly
less than $-1-E_{\rm ar}$, which is at most
$-(1-\overline W_H)-E_{\rm ar}$ because $\overline W_H\geq0$.
For the displacement branch, put $x=A(1-\eta)$.  Since
$\overline W_H\leq\theta$ and $\nu>1$,
\[
 \nu(x-\overline W_H)-(1-\overline W_H)
 =\nu x-1-(\nu-1)\overline W_H
 \geq\nu x-1-(\nu-1)\theta=G.
\]
The first inequality in \eqref{ana:eq:eventualmargins} shows that this
branch too is strictly below the lower bound in
\eqref{ana:eq:arithlower}.  Both alternatives contradict the comparison.
Notice that the proof requires nonzero arithmetic minors only at
cofinally many integer cutoffs, not at every cutoff.

The scalar comparison, including this real-to-integer cofinal step, is
\leanref{CofinalComparison}{QuadraticPi.cofinal_determinant_bounds_inconsistent}.
The assembly is
\leanref{LemniscateExponentConditional}{QuadraticPi.LemniscateExponentConditional.no_unbounded_approximations_of_analyticStatement},
with its analytic argument supplied by the unconditional theorem
\nolinkurl{LemniscateIrrationalityExponent.sameMinorAnalyticStatement}.
\end{proof}

\begin{corollary}[Strict eventual lower bound]
\label{ana:cor:strict}
For every $\nu>2$ there is an integer $Q\geq2$ such that
\begin{equation}
 q^{-\nu}<|\Omega-p/q|
 \quad\text{for every }p\in\mathbb Z\text{ and every integer }q\geq Q.
 \label{ana:eq:eventuallower}
\end{equation}
\end{corollary}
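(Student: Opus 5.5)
The corollary follows from Theorem~\ref{ana:thm:no-unbounded} by negating condition \eqref{ana:eq:unboundedbad} and then upgrading the non-strict inequality to a strict one. The upgrade is done by a small change of exponent, not by a separate argument.

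Fix $\nu>2$ and set $\nu'=(\nu+2)/2$, so that $2<\nu'<\nu$. Apply Theorem~\ref{ana:thm:no-unbounded} to $\nu'$. The negation of \eqref{ana:eq:unboundedbad} gives some $Q_1\in\mathbb N$ such that for every $p\in\mathbb Z$ and every $q\in\mathbb N$ with $q\geq Q_1$ we have $|\Omega-p/q|>q^{-\nu'}$. Now put $Q=\max(Q_1,2)$. For $q\geq Q$ we have $q\geq2$, hence $q^{-\nu}<q^{-\nu'}$ because $\nu>\nu'$. Combining the two gives $q^{-\nu}<q^{-\nu'}<|\Omega-p/q|$, which is \eqref{ana:eq:eventuallower}.

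Two small points need attention. First, the negation of the existential statement in \eqref{ana:eq:unboundedbad} must be handled correctly: it yields a single threshold $Q_1$ valid for all $p$ and all $q\geq Q_1$, with the inequality $|\Omega-p/q|\leq q^{-\nu'}$ failing, i.e.\ the strict reverse inequality holding. Second, $Q\geq2$ must be ensured, which the $\max$ does.

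The case $p=0$ needs no special treatment, since Theorem~\ref{ana:thm:no-unbounded} quantifies over all integers $p$. The detour through $\nu'$ is actually unnecessary, since the negation is already strict; one may simply apply the theorem at $\nu$ itself. The only real obstacle is therefore bookkeeping: matching the quantifier order of the Theorem's negation with the corollary's statement.
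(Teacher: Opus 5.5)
Your proof is correct and is essentially the paper's argument: negating \eqref{ana:eq:unboundedbad}, which Theorem~\ref{ana:thm:no-unbounded} excludes, directly yields a threshold beyond which the strict inequality holds, and taking $\max(Q_1,2)$ secures $Q\geq2$. As you note yourself, the detour through $\nu'$ is superfluous, since the negation of the non-strict inequality is already strict.
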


\begin{proof}
The negation would provide \eqref{ana:eq:unboundedbad} above every
denominator bound.  Theorem~\ref{ana:thm:no-unbounded} excludes it.
The compiled unconditional declaration is
\leanref{LemniscateIrrationalityExponent}{QuadraticPi.Omega_eventual_rational_lower_bound}.
\end{proof}

\subsection{The ordinary irrationality exponent}
\label{ana:subsec:usualmu}

For a rational number $r$ let $\operatorname{den}(r)$ be its positive
reduced denominator.  The formal development uses the definition
\begin{equation}
 \mu(x)=\sup\left\{\nu>0:
 \begin{array}{l}
 \text{infinitely many }r\in\mathbb Q\text{ satisfy }
 \operatorname{den}(r)\geq2,\\[-2pt]
 0<|x-r|<\operatorname{den}(r)^{-\nu}
 \end{array}\right\}.
 \label{ana:eq:mudef}
\end{equation}
This is
\leanref{ExponentConclusion}{QuadraticPi.irrationalityExponent}.
The strict positivity of the approximation error causes no vacuity:
irrationality follows from the same all-denominator estimate.

\begin{theorem}[The lemniscate irrationality exponent]
\label{ana:thm:mu}
The integral-defined lemniscate period is irrational and satisfies
\[
 \boxed{\mu(\Omega)=2}.
\]
\end{theorem}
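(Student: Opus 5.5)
The plan is to derive both assertions from Corollary~\ref{ana:cor:strict}, the Dirichlet approximation theorem, and the definition \eqref{ana:eq:mudef}. Nothing beyond elementary Diophantine arguments should be needed.

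\emph{Irrationality.} Suppose $\Omega=p_0/q_0$ with $q_0\geq1$, and apply Corollary~\ref{ana:cor:strict} with $\nu=3$ to obtain $Q$. Choose an integer $k\geq1$ with $kq_0\geq Q$, and put $p=kp_0$, $q=kq_0$. Then $|\Omega-p/q|=0$, which contradicts $q^{-3}<|\Omega-p/q|$. So $\Omega$ is irrational. Since $\Omega$ is irrational, every rational $r$ satisfies $|\Omega-r|>0$, so the strict positivity condition in \eqref{ana:eq:mudef} holds automatically.

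\emph{Upper bound $\mu(\Omega)\leq2$.} Let $\nu$ belong to the set in \eqref{ana:eq:mudef}. Suppose $\nu>2$, and take $Q$ from Corollary~\ref{ana:cor:strict}. Only finitely many rationals with $\operatorname{den}(r)<Q$ satisfy $|\Omega-r|<1$. Hence infinitely many members of the set have $\operatorname{den}(r)\geq Q$. Write such an $r$ in reduced form $p/q$ with $q=\operatorname{den}(r)\geq Q$. The corollary gives $|\Omega-r|>q^{-\nu}$, contradicting the defining inequality. Therefore every element of the set is at most $2$, the set is bounded above, and the supremum is at most $2$.

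\emph{Lower bound $\mu(\Omega)\geq2$.} Since $\Omega$ is irrational, Dirichlet's theorem (or continued fractions) supplies infinitely many reduced $p/q$ with $0<|\Omega-p/q|<q^{-2}$. I would use convergents, where the inequality is strict because $\Omega$ is irrational. Only finitely many of these have $q=1$, so infinitely many have $q\geq2$. These $p/q$ are distinct because their denominators are unbounded: for a fixed $q$, only finitely many $p$ satisfy $|\Omega-p/q|<1$. Consequently $2$ lies in the set, and in particular the set is nonempty. Together with the bound above, this gives $\mu(\Omega)=2$ as a real supremum.

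The main point needing care is the bookkeeping with reduced denominators. Corollary~\ref{ana:cor:strict} is stated for all pairs $(p,q)$, so applying it to reduced representatives is immediate. The other subtle step is showing that the Dirichlet approximations have the strict inequality $0<|\Omega-p/q|<q^{-2}$ and infinitely many distinct values with denominator at least $2$; both follow from irrationality, as indicated.
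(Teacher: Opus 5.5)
Your proposal is correct and follows essentially the same route as the paper. Irrationality comes from rescaling a putative representation $p_0/q_0$ to a denominator beyond the threshold $Q$ of Corollary~\ref{ana:cor:strict} at $\nu=3$; $2$ belongs to the set via Dirichlet approximations after discarding the finitely many with denominator one; and every $\nu>2$ is excluded because all witnessing rationals would have denominator below $Q$ and error below one, leaving only finitely many. The differences are cosmetic: you phrase the exclusion of $\nu>2$ as a contradiction and use convergents to make distinctness explicit, whereas the paper bounds numerators between $(\Omega-1)q$ and $(\Omega+1)q$ directly.
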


\begin{proof}
First apply \eqref{ana:eq:eventuallower} at $\nu=3$.  If $\Omega=a/b$
were rational with $b>0$, the exact representations
$\Omega=(an)/(bn)$, for arbitrarily large $n$, would contradict the
strict positive lower bound.  Thus $\Omega$ is irrational.  This step is
\leanref{ExponentConclusion}{QuadraticPi.irrational_of_eventualLowerBound},
and its unconditional instance is
\leanref{LemniscateIrrationalityExponent}{QuadraticPi.Omega_irrational}.

Dirichlet's approximation theorem supplies infinitely many reduced
rationals with $|\Omega-r|<\operatorname{den}(r)^{-2}$.  Irrationality
makes each error positive; discarding the finitely many possible
denominator-one approximations shows that $2$ belongs to the set in
\eqref{ana:eq:mudef}.  The precise step is
\leanref{ExponentConclusion}{QuadraticPi.two_mem_approximationExponents}.

Conversely, fix $\nu>2$.  By \eqref{ana:eq:eventuallower}, every good
reduced rational at this exponent has denominator less than the
corresponding $Q$.  Its approximation error is less than one.  For each
of the finitely many denominators the numerator is therefore bounded
between $(\Omega-1)q$ and $(\Omega+1)q$, giving only finitely many such
rationals.  Hence no $\nu>2$ belongs to the approximation-exponent
set.  The set is nonempty and bounded above, contains $2$, and has
supremum $2$.
This is proved abstractly in
\leanref{ExponentConclusion}{QuadraticPi.irrationalityExponent_eq_two_of_eventualLowerBound}.
Its final unconditional instance is
\leanref{LemniscateIrrationalityExponent}{QuadraticPi.irrationalityExponent_Omega_eq_two}.
The same result with \eqref{ana:eq:mudef} fully unfolded is
\leanref{LemniscateIrrationalityExponent}{QuadraticPi.Omega_approximation_supremum_eq_two}.
\end{proof}

The final theorem therefore has no EPA, normalization, analytic-bound,
or irrationality hypothesis: each is either proved on the path above or
derived in the concluding argument.  The historical conditional assembly
is retained as a reusable theorem, and its analytic premise is supplied
by the explicit same-minor bound.

\appendix
\section{Formal theorem, dependency audit, and reproduction}
\label{app:formal}

\subsection{The public endpoint and its proof term}

The final declaration is in
\nolinkurl{QuadraticPi/LemniscateIrrationalityExponent.lean}:
\begin{Verbatim}[fontsize=\small,breaklines=true,breakanywhere=true]
theorem irrationalityExponent_Omega_eq_two :
    irrationalityExponent Lemniscate.Omega = 2 :=
  irrationalityExponent_eq_two_of_analyticStatement
    fixedCost fixedWeightCost
    fixedCost_nonneg fixedWeightCost_one_le
    LemniscateIrrationalityExponent.sameMinorAnalyticStatement
\end{Verbatim}
This declaration lies in namespace \nolinkurl{QuadraticPi}. There are no
parameters or hypotheses in its full type. The supplied analytic statement
is itself a theorem, obtained from the exact determinant norm transport,
the explicit analytic estimate, and the vanishing remainder.

The source file
\nolinkurl{LemniscateExponentConditional.lean} preserves a useful conditional
assembly interface. Its historical comments describe that module's scope
when it was written. In the final module its analytic premise has a proved
witness; the period normalization is also a proved theorem. The current
endpoint does not retain either hypothesis. In particular the intermediate
proposition \nolinkurl{SameMinorAnalyticStatement} is not declared as an
axiom or inserted as an implicit instance.

\subsection{Definitions and foundations}

The period is exactly
\begin{Verbatim}[fontsize=\small,breaklines=true,breakanywhere=true]
def integrand (t : ℝ) : ℝ := 1 / Real.sqrt (1 - t ^ 4)
def Omega : ℝ := 2 * ∫ t in (0 : ℝ)..1, integrand t
\end{Verbatim}
These definitions use Lean's interval-integral notation. The exponent is the supremum of the set described in \eqref{eq:mu-main}, with
the denominator of the reduced rational number supplied by Lean's
\nolinkurl{Rat.den}. The audit prints both definitions and the intermediate
sets, so the name of the endpoint alone is not the identification of the
mathematical target. For $\Omega$ the approximation-exponent set is proved
nonempty and bounded above. Its real supremum therefore agrees with the
usual extended-real irrationality exponent; no assertion about infinite
exponents of arbitrary real numbers is encoded by this use of real supremum.

The endpoint's transitive dependency audit reports exactly
\begin{Verbatim}[fontsize=\small]
[propext, Classical.choice, Quot.sound]
\end{Verbatim}
These are propositional extensionality, classical choice, and quotient
soundness in Lean's standard classical foundations. No additional
mathematical axiom, \nolinkurl{sorryAx}, or unfinished proof is present in
the audited endpoint. As usual, the formal claim is relative to the Lean
kernel and its standard foundations. The mathematical content of the target
is also checked through its unfolded definition and the separate strict
all-denominator bound and irrationality theorem.

\subsection{Recorded verification and publication snapshot}

The publication snapshot is labelled
\nolinkurl{lemniscate-v1-2026-10-16}. It contains the complete local import
closure of \nolinkurl{QuadraticPi.LemniscateEPA6Complete}: 147
\nolinkurl{QuadraticPi} modules and 828 \nolinkurl{OAI} modules. All 975
mathematical source files are byte-for-byte copies of the verified project.
The pinned public dependencies are recorded in
\nolinkurl{lake-manifest.json}. Compiler binaries and build caches are not
included.

On October 16, 2026, the complete target was built again successfully with
5,814 jobs, and the final exponent module with 5,808 jobs. These totals
include cached dependency jobs. A new publication audit checked 316
distinct declarations, including all 154 declarations cited in this paper
and the retained completion-audit declarations. It printed their transitive
axioms and the definitions of the period and approximation exponent.
Every reported axiom belonged to
\nolinkurl{propext}, \nolinkurl{Classical.choice}, and
\nolinkurl{Quot.sound}. Both builds and the audit exited successfully.
This is a verification in the established pinned environment, using its
compatible cache; an independent build from an empty cache is not claimed.

The accompanying article-source archive places the proof snapshot in
\nolinkurl{anc/lean}. The files below are relative to that directory:
\begin{itemize}[leftmargin=*,itemsep=2pt]
\item \nolinkurl{validation/PublicationEndpointBuild.log} and
      \nolinkurl{validation/PublicationCompleteBuild.log};
\item \nolinkurl{validation/LemniscatePublicationAudit.lean} and
      \nolinkurl{validation/PublicationAudit.log};
\item \nolinkurl{validation/VERIFICATION.json}, recording the declaration
      inventory, per-declaration axiom sets, and validation scope;
\item \nolinkurl{SOURCE-MANIFEST.json}, recording file hashes, and
      \nolinkurl{README.txt}, giving the reproduction instructions.
\end{itemize}
The larger research backups preserve the earlier checkpoints and their
history. Reproduction of this article uses the accompanying publication
snapshot and its pinned public dependencies.

\subsection{Pinned environment and build commands}

\begin{center}
\begin{tabular}{@{}lp{0.72\linewidth}@{}}
\toprule
Component & Pinned version or commit\\
\midrule
Lean & 4.34.1\\
Lean commit & \nolinkurl{5045d0056413266e57c625dcd7c365b10e377c52}\\
Mathlib & \nolinkurl{d13f23b723b8a846827a245b89c10fc7d3f11612}\\
OAI source provenance & \nolinkurl{fd4aeeb2ee4fc729c18d98444fed42fd0529eeeb}\\
\bottomrule
\end{tabular}
\end{center}

After extracting the article sources, change into \nolinkurl{anc/lean}.
Install the toolchain named in \nolinkurl{lean-toolchain}, using an ordinary
Lean/elan installation. On Linux or WSL, the supplied Python 3 wrapper
limits compilation to two concurrent compiler processes and four threads
per compiler, while executing the unchanged installed Lean binary:
\begin{Verbatim}[fontsize=\small,breaklines=true,breakanywhere=true]
elan toolchain install leanprover/lean4:v4.34.1
python3 scripts/bounded_lake.py exe cache get
python3 scripts/bounded_lake.py build \
  QuadraticPi.LemniscateEPA6Complete
python3 scripts/bounded_lake.py env lean \
  validation/LemniscatePublicationAudit.lean
\end{Verbatim}
The cache-download step is optional and concerns the pinned public
dependencies. A build without that cache may take substantially longer.
Network access is required to obtain the pinned dependencies. Native
Windows validation is not claimed; use WSL for the supplied scheduler.

The underlying commands are ordinary
\nolinkurl{lake build QuadraticPi.LemniscateEPA6Complete} and
\nolinkurl{lake env lean validation/LemniscatePublicationAudit.lean}.
The publication copy sets the default Lake target to the completion
umbrella. Its \nolinkurl{README.txt} is the entry point; historical module
comments describe the scope of those modules when written.

\section{Exact Lean declaration index}
\label{app:lean-index}

Paths below are relative to \nolinkurl{anc/lean} in the accompanying
article-source archive. Line numbers locate declarations in the frozen
source files; the fully qualified name is the stable reference when
viewing a later revision. Every distinct declaration cited in this paper
is listed. The publication audit inventory also includes supporting
declarations not individually cited in the prose.

\begingroup
\footnotesize
\setlength{\LTleft}{0pt}
\setlength{\LTright}{0pt}
\renewcommand{\arraystretch}{1.08}
\begin{longtable}{@{}p{0.085\linewidth}>{\raggedright\arraybackslash}p{0.875\linewidth}@{}}
\toprule
Reference & Declaration and source location\\
\midrule
\endfirsthead
\toprule
Reference & Declaration and source location (continued)\\
\midrule
\endhead
\bottomrule
\endfoot
\hypertarget{lean.L001}{\textsf{L001}} & \nolinkurl{QuadraticPi.Omega_eventual_rational_lower_bound}\par
\nolinkurl{QuadraticPi/LemniscateIrrationalityExponent.lean}, line 52.\\[5pt]
\hypertarget{lean.L002}{\textsf{L002}} & \nolinkurl{QuadraticPi.Omega_irrational}\par
\nolinkurl{QuadraticPi/LemniscateIrrationalityExponent.lean}, line 60.\\[5pt]
\hypertarget{lean.L003}{\textsf{L003}} & \nolinkurl{QuadraticPi.irrationalityExponent_Omega_eq_two}\par
\nolinkurl{QuadraticPi/LemniscateIrrationalityExponent.lean}, line 65.\\[5pt]
\hypertarget{lean.L004}{\textsf{L004}} & \nolinkurl{QuadraticPi.Omega_approximation_supremum_eq_two}\par
\nolinkurl{QuadraticPi/LemniscateIrrationalityExponent.lean}, line 72.\\[5pt]
\hypertarget{lean.L005}{\textsf{L005}} & \nolinkurl{QuadraticPi.Lemniscate.Omega}\par
\nolinkurl{QuadraticPi/LemniscateTarget.lean}, line 27.\\[5pt]
\hypertarget{lean.L006}{\textsf{L006}} & \nolinkurl{QuadraticPi.Lemniscate.integrand_intervalIntegrable}\par
\nolinkurl{QuadraticPi/LemniscateTarget.lean}, line 68.\\[5pt]
\hypertarget{lean.L007}{\textsf{L007}} & \nolinkurl{QuadraticPi.Lemniscate.Omega_pos}\par
\nolinkurl{QuadraticPi/LemniscateTarget.lean}, line 80.\\[5pt]
\hypertarget{lean.L008}{\textsf{L008}} & \nolinkurl{QuadraticPi.LemniscatePeriod.squarePair}\par
\nolinkurl{QuadraticPi/LemniscatePeriodGeometry.lean}, line 22.\\[5pt]
\hypertarget{lean.L009}{\textsf{L009}} & \nolinkurl{QuadraticPi.LemniscateSquareLattice.square_G_conj}\par
\nolinkurl{QuadraticPi/LemniscateSquareLattice.lean}, line 126.\\[5pt]
\hypertarget{lean.L010}{\textsf{L010}} & \nolinkurl{QuadraticPi.LemniscateSquareLattice.square_weierstrassP_ofReal_re}\par
\nolinkurl{QuadraticPi/LemniscateSquareLattice.lean}, line 98.\\[5pt]
\hypertarget{lean.L011}{\textsf{L011}} & \nolinkurl{QuadraticPi.LemniscateSquareLattice.square_derivWeierstrassP_ofReal_re}\par
\nolinkurl{QuadraticPi/LemniscateSquareLattice.lean}, line 120.\\[5pt]
\hypertarget{lean.L012}{\textsf{L012}} & \nolinkurl{QuadraticPi.LemniscatePeriod.g₃_eq_zero}\par
\nolinkurl{QuadraticPi/LemniscatePeriodGeometry.lean}, line 93.\\[5pt]
\hypertarget{lean.L013}{\textsf{L013}} & \nolinkurl{QuadraticPi.LemniscatePeriod.weierstrassP_mul_I}\par
\nolinkurl{QuadraticPi/LemniscatePeriodGeometry.lean}, line 82.\\[5pt]
\hypertarget{lean.L014}{\textsf{L014}} & \nolinkurl{QuadraticPi.LemniscateSquareLattice.square_g₂_scale}\par
\nolinkurl{QuadraticPi/LemniscateSquareLattice.lean}, line 175.\\[5pt]
\hypertarget{lean.L015}{\textsf{L015}} & \nolinkurl{QuadraticPi.LemniscateSquareInvariantPositivity.G_four_re_lower_bound}\par
\nolinkurl{QuadraticPi/LemniscateSquareInvariantPositivity.lean}, line 102.\\[5pt]
\hypertarget{lean.L016}{\textsf{L016}} & \nolinkurl{QuadraticPi.LemniscateSquareInvariantPositivity.unit_g₂_re_pos}\par
\nolinkurl{QuadraticPi/LemniscateSquareInvariantPositivity.lean}, line 112.\\[5pt]
\hypertarget{lean.L017}{\textsf{L017}} & \nolinkurl{QuadraticPi.LemniscateSquareNormalization.normalizingSide_positive}\par
\nolinkurl{QuadraticPi/LemniscateSquareNormalization.lean}, line 45.\\[5pt]
\hypertarget{lean.L018}{\textsf{L018}} & \nolinkurl{QuadraticPi.LemniscateSquareNormalization.normalizedPair_g₂}\par
\nolinkurl{QuadraticPi/LemniscateSquareNormalization.lean}, line 52.\\[5pt]
\hypertarget{lean.L019}{\textsf{L019}} & \nolinkurl{QuadraticPi.LemniscateSquareNormalization.normalizedPair_g₃}\par
\nolinkurl{QuadraticPi/LemniscateSquareNormalization.lean}, line 55.\\[5pt]
\hypertarget{lean.L020}{\textsf{L020}} & \nolinkurl{QuadraticPi.LemniscateDifferential.order_derivWeierstrassP}\par
\nolinkurl{QuadraticPi/LemniscateDifferential.lean}, line 26.\\[5pt]
\hypertarget{lean.L021}{\textsf{L021}} & \nolinkurl{QuadraticPi.LemniscateDifferential.deriv_derivWeierstrassP}\par
\nolinkurl{QuadraticPi/LemniscateDifferential.lean}, line 66.\\[5pt]
\hypertarget{lean.L022}{\textsf{L022}} & \nolinkurl{QuadraticPi.LemniscateODEUniqueness.second_order_series_unique}\par
\nolinkurl{QuadraticPi/LemniscateODEUniqueness.lean}, line 13.\\[5pt]
\hypertarget{lean.L023}{\textsf{L023}} & \nolinkurl{QuadraticPi.LemniscateODEUniqueness.analytic_second_order_unique}\par
\nolinkurl{QuadraticPi/LemniscateODEUniqueness.lean}, line 68.\\[5pt]
\hypertarget{lean.L024}{\textsf{L024}} & \nolinkurl{QuadraticPi.LemniscateCriticalPoints.two_mul_mem_lattice_of_deriv_eq_zero}\par
\nolinkurl{QuadraticPi/LemniscateCriticalPoints.lean}, line 45.\\[5pt]
\hypertarget{lean.L025}{\textsf{L025}} & \nolinkurl{QuadraticPi.LemniscateRealBranch.real_weierstrassP_tendsto_atTop}\par
\nolinkurl{QuadraticPi/LemniscateRealBranch.lean}, line 38.\\[5pt]
\hypertarget{lean.L026}{\textsf{L026}} & \nolinkurl{QuadraticPi.LemniscateRealBranch.square_real_deriv_ne_zero}\par
\nolinkurl{QuadraticPi/LemniscateRealBranch.lean}, line 25.\\[5pt]
\hypertarget{lean.L027}{\textsf{L027}} & \nolinkurl{QuadraticPi.LemniscateRealBranch.one_lt_realP}\par
\nolinkurl{QuadraticPi/LemniscateRealBranch.lean}, line 87.\\[5pt]
\hypertarget{lean.L028}{\textsf{L028}} & \nolinkurl{QuadraticPi.LemniscateRealBranch.realP_halfPeriod_eq_one}\par
\nolinkurl{QuadraticPi/LemniscateRealBranch.lean}, line 101.\\[5pt]
\hypertarget{lean.L029}{\textsf{L029}} & \nolinkurl{QuadraticPi.LemniscateRealBranch.realDerivP_neg}\par
\nolinkurl{QuadraticPi/LemniscateRealBranch.lean}, line 124.\\[5pt]
\hypertarget{lean.L030}{\textsf{L030}} & \nolinkurl{QuadraticPi.LemniscateRealCalculus.hasDerivAt_realP}\par
\nolinkurl{QuadraticPi/LemniscateRealCalculus.lean}, line 25.\\[5pt]
\hypertarget{lean.L031}{\textsf{L031}} & \nolinkurl{QuadraticPi.LemniscateRealCalculus.hasDerivAt_realDerivP_of_normalization}\par
\nolinkurl{QuadraticPi/LemniscateRealCalculus.lean}, line 45.\\[5pt]
\hypertarget{lean.L032}{\textsf{L032}} & \nolinkurl{QuadraticPi.LemniscateRealCalculus.real_cubic_of_normalization}\par
\nolinkurl{QuadraticPi/LemniscateRealCalculus.lean}, line 59.\\[5pt]
\hypertarget{lean.L033}{\textsf{L033}} & \nolinkurl{QuadraticPi.Lemniscate.reciprocal_sqrt_hasDerivAt}\par
\nolinkurl{QuadraticPi/LemniscateReciprocalBranch.lean}, line 49.\\[5pt]
\hypertarget{lean.L034}{\textsf{L034}} & \nolinkurl{QuadraticPi.Lemniscate.halfPeriod_eq_integral_of_branch}\par
\nolinkurl{QuadraticPi/LemniscateInverseIntegral.lean}, line 41.\\[5pt]
\hypertarget{lean.L035}{\textsf{L035}} & \nolinkurl{QuadraticPi.Lemniscate.period_eq_Omega_of_real_branch}\par
\nolinkurl{QuadraticPi/LemniscateReciprocalBranch.lean}, line 90.\\[5pt]
\hypertarget{lean.L036}{\textsf{L036}} & \nolinkurl{QuadraticPi.LemniscateOmegaNormalization.omegaPair_g₂_eq_four}\par
\nolinkurl{QuadraticPi/LemniscateOmegaNormalization.lean}, line 35.\\[5pt]
\hypertarget{lean.L037}{\textsf{L037}} & \nolinkurl{QuadraticPi.LemniscateOmegaNormalization.period_eq_Omega_of_normalization}\par
\nolinkurl{QuadraticPi/LemniscateOmegaNormalization.lean}, line 16.\\[5pt]
\hypertarget{lean.L038}{\textsf{L038}} & \nolinkurl{QuadraticPi.LemniscateOmegaNormalization.normalizingSide_eq_Omega}\par
\nolinkurl{QuadraticPi/LemniscateOmegaNormalization.lean}, line 30.\\[5pt]
\hypertarget{lean.L039}{\textsf{L039}} & \nolinkurl{QuadraticPi.LemniscatePeriod.omega_diagonalHalfPeriod_not_mem}\par
\nolinkurl{QuadraticPi/LemniscatePeriodGeometry.lean}, line 145.\\[5pt]
\hypertarget{lean.L040}{\textsf{L040}} & \nolinkurl{QuadraticPi.LemniscatePeriod.omega_weierstrassP_diagonalHalfPeriod}\par
\nolinkurl{QuadraticPi/LemniscatePeriodGeometry.lean}, line 148.\\[5pt]
\hypertarget{lean.L041}{\textsf{L041}} & \nolinkurl{QuadraticPi.LemniscatePeriod.omega_derivWeierstrassP_diagonalHalfPeriod}\par
\nolinkurl{QuadraticPi/LemniscatePeriodGeometry.lean}, line 163.\\[5pt]
\hypertarget{lean.L042}{\textsf{L042}} & \nolinkurl{QuadraticPi.LemniscateLocal.localX}\par
\nolinkurl{QuadraticPi/LemniscateLocal.lean}, line 40.\\[5pt]
\hypertarget{lean.L043}{\textsf{L043}} & \nolinkurl{QuadraticPi.LemniscateLocal.ellipticLog}\par
\nolinkurl{QuadraticPi/LemniscateLocal.lean}, line 69.\\[5pt]
\hypertarget{lean.L044}{\textsf{L044}} & \nolinkurl{QuadraticPi.LemniscateLocal.localX_equation}\par
\nolinkurl{QuadraticPi/LemniscateLocal.lean}, line 45.\\[5pt]
\hypertarget{lean.L045}{\textsf{L045}} & \nolinkurl{QuadraticPi.LemniscateLocal.ellipticLog_derivative_equation}\par
\nolinkurl{QuadraticPi/LemniscateLocal.lean}, line 90.\\[5pt]
\hypertarget{lean.L046}{\textsf{L046}} & \nolinkurl{QuadraticPi.LemniscateLocal.ellipticLog_coeff_degree_cleared}\par
\nolinkurl{QuadraticPi/LemniscateLocal.lean}, line 95.\\[5pt]
\hypertarget{lean.L047}{\textsf{L047}} & \nolinkurl{QuadraticPi.LemniscateTaylorBridge.omegaXSeries}\par
\nolinkurl{QuadraticPi/LemniscateTaylorBridge.lean}, line 53.\\[5pt]
\hypertarget{lean.L048}{\textsf{L048}} & \nolinkurl{QuadraticPi.LemniscateTaylorBridge.omegaYSeries}\par
\nolinkurl{QuadraticPi/LemniscateTaylorBridge.lean}, line 58.\\[5pt]
\hypertarget{lean.L049}{\textsf{L049}} & \nolinkurl{QuadraticPi.LemniscateOmegaGerm.omega_series_cubic}\par
\nolinkurl{QuadraticPi/LemniscateOmegaGerm.lean}, line 23.\\[5pt]
\hypertarget{lean.L050}{\textsf{L050}} & \nolinkurl{QuadraticPi.LemniscateOmegaGerm.omegaYSeries_derivative}\par
\nolinkurl{QuadraticPi/LemniscateOmegaGerm.lean}, line 27.\\[5pt]
\hypertarget{lean.L051}{\textsf{L051}} & \nolinkurl{QuadraticPi.LemniscateOmegaGerm.omega_localX_subst_eq}\par
\nolinkurl{QuadraticPi/LemniscateOmegaGerm.lean}, line 31.\\[5pt]
\hypertarget{lean.L052}{\textsf{L052}} & \nolinkurl{QuadraticPi.LemniscateOmegaGerm.omega_ellipticLog_subst_eq}\par
\nolinkurl{QuadraticPi/LemniscateOmegaGerm.lean}, line 35.\\[5pt]
\hypertarget{lean.L053}{\textsf{L053}} & \nolinkurl{QuadraticPi.LemniscateOmegaGerm.omegaYSeries_linear}\par
\nolinkurl{QuadraticPi/LemniscateOmegaGerm.lean}, line 39.\\[5pt]
\hypertarget{lean.L054}{\textsf{L054}} & \nolinkurl{QuadraticPi.LemniscateTaylorBridge.omegaXSeries_derivative}\par
\nolinkurl{QuadraticPi/LemniscateTaylorBridge.lean}, line 94.\\[5pt]
\hypertarget{lean.L055}{\textsf{L055}} & \nolinkurl{QuadraticPi.LemniscateLogParameterChart.logParameter}\par
\nolinkurl{QuadraticPi/LemniscateLogParameterChart.lean}, line 17.\\[5pt]
\hypertarget{lean.L056}{\textsf{L056}} & \nolinkurl{QuadraticPi.LemniscateLogParameterChart.xFunction}\par
\nolinkurl{QuadraticPi/LemniscateLogParameterChart.lean}, line 19.\\[5pt]
\hypertarget{lean.L057}{\textsf{L057}} & \nolinkurl{QuadraticPi.LemniscateLogParameterChart.yFunction}\par
\nolinkurl{QuadraticPi/LemniscateLogParameterChart.lean}, line 20.\\[5pt]
\hypertarget{lean.L058}{\textsf{L058}} & \nolinkurl{QuadraticPi.LemniscateLogParameterChart.xSeries}\par
\nolinkurl{QuadraticPi/LemniscateLogParameterChart.lean}, line 21.\\[5pt]
\hypertarget{lean.L059}{\textsf{L059}} & \nolinkurl{QuadraticPi.LemniscateLogParameterChart.ySeries}\par
\nolinkurl{QuadraticPi/LemniscateLogParameterChart.lean}, line 22.\\[5pt]
\hypertarget{lean.L060}{\textsf{L060}} & \nolinkurl{QuadraticPi.LemniscateLogParameterChart.ellSeries}\par
\nolinkurl{QuadraticPi/LemniscateLogParameterChart.lean}, line 23.\\[5pt]
\hypertarget{lean.L061}{\textsf{L061}} & \nolinkurl{QuadraticPi.LemniscateOmegaGerm.log_series_cubic}\par
\nolinkurl{QuadraticPi/LemniscateOmegaGerm.lean}, line 42.\\[5pt]
\hypertarget{lean.L062}{\textsf{L062}} & \nolinkurl{QuadraticPi.LemniscateOmegaGerm.log_ySeries_derivative}\par
\nolinkurl{QuadraticPi/LemniscateOmegaGerm.lean}, line 47.\\[5pt]
\hypertarget{lean.L063}{\textsf{L063}} & \nolinkurl{QuadraticPi.LemniscateOmegaGerm.log_localX_subst}\par
\nolinkurl{QuadraticPi/LemniscateOmegaGerm.lean}, line 56.\\[5pt]
\hypertarget{lean.L064}{\textsf{L064}} & \nolinkurl{QuadraticPi.LemniscateOmegaGerm.log_ellipticLog_subst}\par
\nolinkurl{QuadraticPi/LemniscateOmegaGerm.lean}, line 61.\\[5pt]
\hypertarget{lean.L065}{\textsf{L065}} & \nolinkurl{QuadraticPi.LemniscateOmegaGerm.log_ySeries_linear}\par
\nolinkurl{QuadraticPi/LemniscateOmegaGerm.lean}, line 53.\\[5pt]
\hypertarget{lean.L066}{\textsf{L066}} & \nolinkurl{QuadraticPi.LemniscateLogParameterChart.ellSeries_linear}\par
\nolinkurl{QuadraticPi/LemniscateLogParameterChart.lean}, line 68.\\[5pt]
\hypertarget{lean.L067}{\textsf{L067}} & \nolinkurl{QuadraticPi.SeparatedWeightPA.BaseData}\par
\nolinkurl{QuadraticPi/SeparatedWeightPA.lean}, line 23.\\[5pt]
\hypertarget{lean.L068}{\textsf{L068}} & \nolinkurl{QuadraticPi.LemniscateInterpolation.formalJet_cubic_relation}\par
\nolinkurl{QuadraticPi/LemniscateInterpolation.lean}, line 75.\\[5pt]
\hypertarget{lean.L069}{\textsf{L069}} & \nolinkurl{QuadraticPi.LemniscateInterpolation.Statement}\par
\nolinkurl{QuadraticPi/LemniscateInterpolation.lean}, line 136.\\[5pt]
\hypertarget{lean.L070}{\textsf{L070}} & \nolinkurl{QuadraticPi.LemniscateGlobalInterpolation.ellipticSeparatedWeightPA}\par
\nolinkurl{QuadraticPi/LemniscateGlobalInterpolationTheorem.lean}, line 37.\\[5pt]
\hypertarget{lean.L071}{\textsf{L071}} & \nolinkurl{QuadraticPi.LemniscatePlaceContact.contact}\par
\nolinkurl{QuadraticPi/LemniscatePlaceContact.lean}, line 90.\\[5pt]
\hypertarget{lean.L072}{\textsf{L072}} & \nolinkurl{QuadraticPi.LemniscateGrowthBudget.strict_growth_contact_margin}\par
\nolinkurl{QuadraticPi/LemniscateGrowthBudget.lean}, line 42.\\[5pt]
\hypertarget{lean.L073}{\textsf{L073}} & \nolinkurl{QuadraticPi.LemniscateGrowthBudget.growth_factor_lt_three_halves}\par
\nolinkurl{QuadraticPi/LemniscateGrowthBudget.lean}, line 52.\\[5pt]
\hypertarget{lean.L074}{\textsf{L074}} & \nolinkurl{QuadraticPi.LemniscateComparisonConstant.exists_thresholds}\par
\nolinkurl{QuadraticPi/LemniscateComparisonConstant.lean}, line 96.\\[5pt]
\hypertarget{lean.L075}{\textsf{L075}} & \nolinkurl{QuadraticPi.LemniscateWeightGeometry.NumericData}\par
\nolinkurl{QuadraticPi/LemniscateWeightGeometry.lean}, line 26.\\[5pt]
\hypertarget{lean.L076}{\textsf{L076}} & \nolinkurl{QuadraticPi.LemniscateGeometryFrame.formalJet_polynomialFrame}\par
\nolinkurl{QuadraticPi/LemniscateGeometryFrame.lean}, line 224.\\[5pt]
\hypertarget{lean.L077}{\textsf{L077}} & \nolinkurl{QuadraticPi.LemniscateWeightedDerivation.polynomialFrame_supportBound}\par
\nolinkurl{QuadraticPi/LemniscateWeightedDerivation.lean}, line 179.\\[5pt]
\hypertarget{lean.L078}{\textsf{L078}} & \nolinkurl{QuadraticPi.LemniscateAuxiliaryPolynomial.eventually_exists_nonzero_elliptic_auxiliary}\par
\nolinkurl{QuadraticPi/LemniscateAuxiliaryPolynomial.lean}, line 228.\\[5pt]
\hypertarget{lean.L079}{\textsf{L079}} & \nolinkurl{QuadraticPi.LemniscateCurveAuxiliary.exists_auxiliary_with_persistence_inputs}\par
\nolinkurl{QuadraticPi/LemniscateCurveAuxiliary.lean}, line 27.\\[5pt]
\hypertarget{lean.L080}{\textsf{L080}} & \nolinkurl{QuadraticPi.LemniscateNormalComparison.normal_product_comparison}\par
\nolinkurl{QuadraticPi/LemniscateNormalComparison.lean}, line 49.\\[5pt]
\hypertarget{lean.L081}{\textsf{L081}} & \nolinkurl{QuadraticPi.LemniscateDifferentialRigidity.finite_extension_no_exact_cubicDifferential}\par
\nolinkurl{QuadraticPi/LemniscateDifferentialRigidity.lean}, line 124.\\[5pt]
\hypertarget{lean.L082}{\textsf{L082}} & \nolinkurl{QuadraticPi.LemniscateCurveRigidity.constant_projection}\par
\nolinkurl{QuadraticPi/LemniscateCurveRigidity.lean}, line 20.\\[5pt]
\hypertarget{lean.L083}{\textsf{L083}} & \nolinkurl{QuadraticPi.LemniscateCurveInequality.weighted_curve_inequality}\par
\nolinkurl{QuadraticPi/LemniscateCurveInequality.lean}, line 18.\\[5pt]
\hypertarget{lean.L084}{\textsf{L084}} & \nolinkurl{QuadraticPi.LemniscateFixedJetScale.exists_scale}\par
\nolinkurl{QuadraticPi/LemniscateFixedJetScale.lean}, line 34.\\[5pt]
\hypertarget{lean.L085}{\textsf{L085}} & \nolinkurl{QuadraticPi.LemniscateJetSelector.localized_selector_eq}\par
\nolinkurl{QuadraticPi/LemniscateJetSelector.lean}, line 63.\\[5pt]
\hypertarget{lean.L086}{\textsf{L086}} & \nolinkurl{QuadraticPi.LemniscateJetSelector.selectorIdeal_le_prime_iff}\par
\nolinkurl{QuadraticPi/LemniscateJetSelector.lean}, line 74.\\[5pt]
\hypertarget{lean.L087}{\textsf{L087}} & \nolinkurl{QuadraticPi.LemniscateCurveModel.hyperplane_degree}\par
\nolinkurl{QuadraticPi/LemniscateCurveHyperplaneDegree.lean}, line 27.\\[5pt]
\hypertarget{lean.L088}{\textsf{L088}} & \nolinkurl{QuadraticPi.LemniscateCurveModel.exceptional_degree}\par
\nolinkurl{QuadraticPi/LemniscateCurveExceptionalDegree.lean}, line 50.\\[5pt]
\hypertarget{lean.L089}{\textsf{L089}} & \nolinkurl{QuadraticPi.LemniscateBlowupGeometry.interpolationBundle_ample}\par
\nolinkurl{QuadraticPi/LemniscateBlowupAmple.lean}, line 83.\\[5pt]
\hypertarget{lean.L090}{\textsf{L090}} & \nolinkurl{QuadraticPi.LemniscateGlobalInterpolation.actual_blowupBundle_ample}\par
\nolinkurl{QuadraticPi/LemniscateGlobalInterpolationTheorem.lean}, line 19.\\[5pt]
\hypertarget{lean.L091}{\textsf{L091}} & \nolinkurl{QuadraticPi.LemniscateGlobalInterpolation.eventual_packetMap_of_ample}\par
\nolinkurl{QuadraticPi/LemniscateGlobalInterpolation.lean}, line 78.\\[5pt]
\hypertarget{lean.L092}{\textsf{L092}} & \nolinkurl{QuadraticPi.LemniscateGlobalInterpolation.eventual_divisible_packetMap_of_ample}\par
\nolinkurl{QuadraticPi/LemniscateGlobalInterpolation.lean}, line 110.\\[5pt]
\hypertarget{lean.L093}{\textsf{L093}} & \nolinkurl{QuadraticPi.LemniscateArithmetic.coefficient_monomialImage}\par
\nolinkurl{QuadraticPi/LemniscateArithmetic.lean}, line 54.\\[5pt]
\hypertarget{lean.L094}{\textsf{L094}} & \nolinkurl{QuadraticPi.LemniscateInterpolationMatrix.formal_packets_surjective_iff_truncated}\par
\nolinkurl{QuadraticPi/LemniscateInterpolationMatrix.lean}, line 201.\\[5pt]
\hypertarget{lean.L095}{\textsf{L095}} & \nolinkurl{QuadraticPi.LemniscateInterpolationMatrix.matrix_surjective_of_epa_packet}\par
\nolinkurl{QuadraticPi/LemniscateInterpolationMatrix.lean}, line 416.\\[5pt]
\hypertarget{lean.L096}{\textsf{L096}} & \nolinkurl{QuadraticPi.LemniscateUnconditionalMinors.cofinal_minors}\par
\nolinkurl{QuadraticPi/LemniscateUnconditionalMinors.lean}, line 17.\\[5pt]
\hypertarget{lean.L097}{\textsf{L097}} & \nolinkurl{QuadraticPi.LemniscateLocal.ellipticLog_trunc_lcm_cleared}\par
\nolinkurl{QuadraticPi/LemniscateLocal.lean}, line 115.\\[5pt]
\hypertarget{lean.L098}{\textsf{L098}} & \nolinkurl{QuadraticPi.LemniscateDeterminant.selectedMinor_entries_gaussian}\par
\nolinkurl{QuadraticPi/LemniscateDeterminant.lean}, line 169.\\[5pt]
\hypertarget{lean.L099}{\textsf{L099}} & \nolinkurl{QuadraticPi.LemniscateDeterminant.selectedMinor_clearing_bound}\par
\nolinkurl{QuadraticPi/LemniscateDeterminant.lean}, line 215.\\[5pt]
\hypertarget{lean.L100}{\textsf{L100}} & \nolinkurl{QuadraticPi.LemniscateDeterminant.selectedMinor_arithmetic_lower_bound}\par
\nolinkurl{QuadraticPi/LemniscateDeterminant.lean}, line 242.\\[5pt]
\hypertarget{lean.L101}{\textsf{L101}} & \nolinkurl{QuadraticPi.LemniscateUnconditionalMinors.cofinal_arithmetic_lower_bounds}\par
\nolinkurl{QuadraticPi/LemniscateUnconditionalMinors.lean}, line 36.\\[5pt]
\hypertarget{lean.L102}{\textsf{L102}} & \nolinkurl{QuadraticPi.LemniscateAnalyticRowData.row_order_sum_le}\par
\nolinkurl{QuadraticPi/LemniscateAnalyticRowData.lean}, line 71.\\[5pt]
\hypertarget{lean.L103}{\textsf{L103}} & \nolinkurl{QuadraticPi.LemniscateExponentConditional.mean_bounds}\par
\nolinkurl{QuadraticPi/LemniscateExponentConditional.lean}, line 79.\\[5pt]
\hypertarget{lean.L104}{\textsf{L104}} & \nolinkurl{QuadraticPi.LemniscateExponentConditional.minor}\par
\nolinkurl{QuadraticPi/LemniscateExponentConditional.lean}, line 39.\\[5pt]
\hypertarget{lean.L105}{\textsf{L105}} & \nolinkurl{QuadraticPi.LemniscateLogJetDeterminant.det_fullLog_selected_eq_selectedMinor}\par
\nolinkurl{QuadraticPi/LemniscateLogJetDeterminant.lean}, line 99.\\[5pt]
\hypertarget{lean.L106}{\textsf{L106}} & \nolinkurl{QuadraticPi.LemniscateAnalytic.squareTheta}\par
\nolinkurl{QuadraticPi/LemniscateAnalyticTheta.lean}, line 20.\\[5pt]
\hypertarget{lean.L107}{\textsf{L107}} & \nolinkurl{QuadraticPi.LemniscateAnalytic.squareTheta_differentiable}\par
\nolinkurl{QuadraticPi/LemniscateAnalyticTheta.lean}, line 22.\\[5pt]
\hypertarget{lean.L108}{\textsf{L108}} & \nolinkurl{QuadraticPi.LemniscateAnalytic.omegaMultiplier}\par
\nolinkurl{QuadraticPi/LemniscateAnalyticRealization.lean}, line 25.\\[5pt]
\hypertarget{lean.L109}{\textsf{L109}} & \nolinkurl{QuadraticPi.LemniscateAnalytic.squareTheta_add_one}\par
\nolinkurl{QuadraticPi/LemniscateAnalyticTheta.lean}, line 26.\\[5pt]
\hypertarget{lean.L110}{\textsf{L110}} & \nolinkurl{QuadraticPi.LemniscateAnalytic.squareTheta_add_I}\par
\nolinkurl{QuadraticPi/LemniscateAnalyticTheta.lean}, line 29.\\[5pt]
\hypertarget{lean.L111}{\textsf{L111}} & \nolinkurl{QuadraticPi.LemniscateAnalytic.squareTheta_half_zero}\par
\nolinkurl{QuadraticPi/LemniscateAnalyticTheta.lean}, line 36.\\[5pt]
\hypertarget{lean.L112}{\textsf{L112}} & \nolinkurl{QuadraticPi.LemniscateAnalytic.omegaMultiplier_lattice_zero}\par
\nolinkurl{QuadraticPi/LemniscateAnalyticRealization.lean}, line 37.\\[5pt]
\hypertarget{lean.L113}{\textsf{L113}} & \nolinkurl{QuadraticPi.LemniscateAnalytic.gaussianThetaNorm_add_gaussian}\par
\nolinkurl{QuadraticPi/LemniscateAnalyticTheta.lean}, line 131.\\[5pt]
\hypertarget{lean.L114}{\textsf{L114}} & \nolinkurl{QuadraticPi.LemniscateAnalytic.omegaMultiplier_uniform_nonzero_inverse_pow}\par
\nolinkurl{QuadraticPi/LemniscateAnalyticRealization.lean}, line 67.\\[5pt]
\hypertarget{lean.L115}{\textsf{L115}} & \nolinkurl{QuadraticPi.LemniscateAnalytic.omegaClearedX_entire}\par
\nolinkurl{QuadraticPi/LemniscateAnalyticRealization.lean}, line 40.\\[5pt]
\hypertarget{lean.L116}{\textsf{L116}} & \nolinkurl{QuadraticPi.LemniscateAnalytic.omegaClearedY_entire}\par
\nolinkurl{QuadraticPi/LemniscateAnalyticRealization.lean}, line 43.\\[5pt]
\hypertarget{lean.L117}{\textsf{L117}} & \nolinkurl{QuadraticPi.LemniscateAnalytic.omega_cleared_quadratic_growth}\par
\nolinkurl{QuadraticPi/LemniscateAnalyticGrowth.lean}, line 66.\\[5pt]
\hypertarget{lean.L118}{\textsf{L118}} & \nolinkurl{QuadraticPi.LemniscateAnalytic.omegaMonomialColumn_uniform_growth}\par
\nolinkurl{QuadraticPi/LemniscateAnalyticColumns.lean}, line 48.\\[5pt]
\hypertarget{lean.L119}{\textsf{L119}} & \nolinkurl{QuadraticPi.LemniscateAnalytic.omegaThetaArgument_im_sq_le}\par
\nolinkurl{QuadraticPi/LemniscateAnalyticTags.lean}, line 34.\\[5pt]
\hypertarget{lean.L120}{\textsf{L120}} & \nolinkurl{QuadraticPi.LemniscateAnalytic.omegaMonomialColumn_entire}\par
\nolinkurl{QuadraticPi/LemniscateAnalyticColumns.lean}, line 19.\\[5pt]
\hypertarget{lean.L121}{\textsf{L121}} & \nolinkurl{QuadraticPi.LemniscateAnalytic.omegaMonomialColumn_eq}\par
\nolinkurl{QuadraticPi/LemniscateAnalyticColumns.lean}, line 24.\\[5pt]
\hypertarget{lean.L122}{\textsf{L122}} & \nolinkurl{QuadraticPi.LemniscateActualMinorTransport.clearedMinor_eq_transformed}\par
\nolinkurl{QuadraticPi/LemniscateActualMinorTransport.lean}, line 23.\\[5pt]
\hypertarget{lean.L123}{\textsf{L123}} & \nolinkurl{QuadraticPi.LemniscateActualMinorTransport.norm_clearedMinor_eq}\par
\nolinkurl{QuadraticPi/LemniscateActualMinorTransport.lean}, line 71.\\[5pt]
\hypertarget{lean.L124}{\textsf{L124}} & \nolinkurl{QuadraticPi.LemniscateAnalytic.omegaInverseConstant_bound}\par
\nolinkurl{QuadraticPi/LemniscateMultiplierBudget.lean}, line 30.\\[5pt]
\hypertarget{lean.L125}{\textsf{L125}} & \nolinkurl{QuadraticPi.LemniscateIrrationalityExponent.norm_actual_minor_le}\par
\nolinkurl{QuadraticPi/LemniscateIrrationalityExponent.lean}, line 18.\\[5pt]
\hypertarget{lean.L126}{\textsf{L126}} & \nolinkurl{QuadraticPi.LemniscateDisplacementExpansion.displacedColumn_expansion}\par
\nolinkurl{QuadraticPi/LemniscateDisplacementExpansion.lean}, line 88.\\[5pt]
\hypertarget{lean.L127}{\textsf{L127}} & \nolinkurl{QuadraticPi.LemniscateTranslatedDeterminant.det_displacedMatrix_expansion}\par
\nolinkurl{QuadraticPi/LemniscateTranslatedDeterminant.lean}, line 37.\\[5pt]
\hypertarget{lean.L128}{\textsf{L128}} & \nolinkurl{QuadraticPi.LemniscateDisplacement.gridDelta_exp_bound}\par
\nolinkurl{QuadraticPi/LemniscateDisplacementBounds.lean}, line 101.\\[5pt]
\hypertarget{lean.L129}{\textsf{L129}} & \nolinkurl{QuadraticPi.LemniscateDisplacement.norm_rowScalar_exp_weighted}\par
\nolinkurl{QuadraticPi/LemniscateDisplacementBounds.lean}, line 60.\\[5pt]
\hypertarget{lean.L130}{\textsf{L130}} & \nolinkurl{QuadraticPi.LemniscateTranslatedDeterminant.scalar_product_bound}\par
\nolinkurl{QuadraticPi/LemniscateTranslatedDeterminant.lean}, line 98.\\[5pt]
\hypertarget{lean.L131}{\textsf{L131}} & \nolinkurl{QuadraticPi.LemniscateAnalytic.omegaTagColumn_uniform_growth}\par
\nolinkurl{QuadraticPi/LemniscateAnalyticTags.lean}, line 143.\\[5pt]
\hypertarget{lean.L132}{\textsf{L132}} & \nolinkurl{QuadraticPi.LemniscateAnalytic.omegaTaggedDeterminant_exp_bound}\par
\nolinkurl{QuadraticPi/LemniscateAnalyticCollision.lean}, line 54.\\[5pt]
\hypertarget{lean.L133}{\textsf{L133}} & \nolinkurl{OAI.PiExponent.AnalyticCollision.translated_row_determinant_exp_bound}\par
\nolinkurl{OAI/NumberTheory/PiExponent/Analysis/AnalyticDeterminantCollision.lean}, line 56.\\[5pt]
\hypertarget{lean.L134}{\textsf{L134}} & \nolinkurl{QuadraticPi.LemniscateTranslatedDeterminant.displaced_determinant_log_bound}\par
\nolinkurl{QuadraticPi/LemniscateTranslatedDeterminant.lean}, line 194.\\[5pt]
\hypertarget{lean.L135}{\textsf{L135}} & \nolinkurl{QuadraticPi.LemniscateSameMinorBound.fixedCost}\par
\nolinkurl{QuadraticPi/LemniscateSameMinorBound.lean}, line 17.\\[5pt]
\hypertarget{lean.L136}{\textsf{L136}} & \nolinkurl{QuadraticPi.LemniscateAnalytic.final_holomorphic_budget}\par
\nolinkurl{QuadraticPi/LemniscateAnalyticFinalBudget.lean}, line 43.\\[5pt]
\hypertarget{lean.L137}{\textsf{L137}} & \nolinkurl{QuadraticPi.LemniscateSameMinorBound.explicit_bound_of_norm_transfer}\par
\nolinkurl{QuadraticPi/LemniscateSameMinorBound.lean}, line 41.\\[5pt]
\hypertarget{lean.L138}{\textsf{L138}} & \nolinkurl{OAI.PiExponent.MatrixCounting.tendsto_rowCount_normalized}\par
\nolinkurl{OAI/NumberTheory/PiExponent/Approximation/MatrixCounting.lean}, line 31.\\[5pt]
\hypertarget{lean.L139}{\textsf{L139}} & \nolinkurl{OAI.PiExponent.MatrixCounting.tendsto_lowIndexCount_normalized}\par
\nolinkurl{OAI/NumberTheory/PiExponent/Approximation/MatrixCounting.lean}, line 64.\\[5pt]
\hypertarget{lean.L140}{\textsf{L140}} & \nolinkurl{QuadraticPi.LemniscateAnalyticRemainder.tendsto_remainder}\par
\nolinkurl{QuadraticPi/LemniscateAnalyticRemainder.lean}, line 60.\\[5pt]
\hypertarget{lean.L141}{\textsf{L141}} & \nolinkurl{QuadraticPi.LemniscateExponentConditional.tendsto_collisionRate}\par
\nolinkurl{QuadraticPi/LemniscateExponentConditional.lean}, line 61.\\[5pt]
\hypertarget{lean.L142}{\textsf{L142}} & \nolinkurl{QuadraticPi.LemniscateIrrationalityExponent.sameMinorAnalyticStatement}\par
\nolinkurl{QuadraticPi/LemniscateIrrationalityExponent.lean}, line 40.\\[5pt]
\hypertarget{lean.L143}{\textsf{L143}} & \nolinkurl{OAI.PiExponent.exists_parameters}\par
\nolinkurl{OAI/NumberTheory/PiExponent/Approximation/Parameters.lean}, line 114.\\[5pt]
\hypertarget{lean.L144}{\textsf{L144}} & \nolinkurl{QuadraticPi.LemniscateSquareParameters.exists_squareDimension_margin}\par
\nolinkurl{QuadraticPi/LemniscateSquareParameters.lean}, line 184.\\[5pt]
\hypertarget{lean.L145}{\textsf{L145}} & \nolinkurl{QuadraticPi.LemniscateParameterMargin.exists_margin}\par
\nolinkurl{QuadraticPi/LemniscateParameterMargin.lean}, line 35.\\[5pt]
\hypertarget{lean.L146}{\textsf{L146}} & \nolinkurl{QuadraticPi.LemniscateDataConstruction.exists_scalarData}\par
\nolinkurl{QuadraticPi/LemniscateDataConstruction.lean}, line 40.\\[5pt]
\hypertarget{lean.L147}{\textsf{L147}} & \nolinkurl{QuadraticPi.LemniscateDataConstruction.exists_approximations_above_thresholds}\par
\nolinkurl{QuadraticPi/LemniscateDataConstruction.lean}, line 99.\\[5pt]
\hypertarget{lean.L148}{\textsf{L148}} & \nolinkurl{QuadraticPi.LemniscateDataConstruction.exists_small_error_approximations}\par
\nolinkurl{QuadraticPi/LemniscateDataConstruction.lean}, line 129.\\[5pt]
\hypertarget{lean.L149}{\textsf{L149}} & \nolinkurl{QuadraticPi.cofinal_determinant_bounds_inconsistent}\par
\nolinkurl{QuadraticPi/CofinalComparison.lean}, line 10.\\[5pt]
\hypertarget{lean.L150}{\textsf{L150}} & \nolinkurl{QuadraticPi.LemniscateExponentConditional.no_unbounded_approximations_of_analyticStatement}\par
\nolinkurl{QuadraticPi/LemniscateExponentConditional.lean}, line 114.\\[5pt]
\hypertarget{lean.L151}{\textsf{L151}} & \nolinkurl{QuadraticPi.irrationalityExponent}\par
\nolinkurl{QuadraticPi/ExponentConclusion.lean}, line 33.\\[5pt]
\hypertarget{lean.L152}{\textsf{L152}} & \nolinkurl{QuadraticPi.irrational_of_eventualLowerBound}\par
\nolinkurl{QuadraticPi/ExponentConclusion.lean}, line 43.\\[5pt]
\hypertarget{lean.L153}{\textsf{L153}} & \nolinkurl{QuadraticPi.two_mem_approximationExponents}\par
\nolinkurl{QuadraticPi/ExponentConclusion.lean}, line 92.\\[5pt]
\hypertarget{lean.L154}{\textsf{L154}} & \nolinkurl{QuadraticPi.irrationalityExponent_eq_two_of_eventualLowerBound}\par
\nolinkurl{QuadraticPi/ExponentConclusion.lean}, line 131.\\[5pt]
\end{longtable}
\endgroup


\begin{thebibliography}{9}
\bibitem{coxhyde}
D. A. Cox and T. Hyde,
\emph{The Galois theory of the lemniscate},
Journal of Number Theory \textbf{135} (2014), 43--59.
\url{https://doi.org/10.1016/j.jnt.2013.08.006}.

\bibitem{lean4}
L. de Moura and S. Ullrich,
\emph{The Lean 4 theorem prover and programming language},
in Automated Deduction--CADE 28, Lecture Notes in Computer Science
\textbf{12699}, Springer, 2021, pp. 625--635.
\url{https://doi.org/10.1007/978-3-030-79876-5_37}.

\bibitem{mathlib}
The mathlib Community,
\emph{The Lean mathematical library},
Proceedings of the 9th ACM SIGPLAN International Conference on
Certified Programs and Proofs (CPP 2020), ACM, 2020.
\url{https://doi.org/10.1145/3372885.3373824}.

\bibitem{oaiformalsnapshot}
OpenAI,
\emph{Mathematical manuscripts and supporting Lean proof artifacts},
repository \nolinkurl{openai/math}, source snapshot
\nolinkurl{fd4aeeb2ee4fc729c18d98444fed42fd0529eeeb} (2026).
\url{https://github.com/openai/math}.
The selected local source files, per-file hashes, provenance notice,
and upstream license are included in the accompanying Lean snapshot.

\bibitem{epa6artifact}
G. V. Chudnovsky, with OpenAI ChatGPT assistance,
\emph{Lemniscate-period formal proof snapshot},
mathematical development October 10, 2026; publication snapshot
\nolinkurl{lemniscate-v1-2026-10-16}.
The sources and verification records accompany this article in
\nolinkurl{anc/lean}; see Appendix~\ref{app:formal}.
\end{thebibliography}
\end{document}